\def\y{{\bf y}}
\def\x{{\bf x}}
\def\x{{\mathbf x}}
\def\x{{\bf x}}
\def\y{{\bf y}}
\def\h{{\bf h}}
\def\be{\begin{equation}}
\def\ee{\end{equation}}
\def\ba{\left[\begin{array}}
\def\ea{\end{array}\right]}
\def\x{{\bf x}}
\def\y{{\bf y}}
\def\1{{\bf 1}}
\def\g{{\bf g}}
\def\0{{\bf 0}}
\newtheorem{theorem}{Theorem}
\newtheorem{lemma}{Lemma}
\begin{document}

\begin{singlespace}

\title {Discrete perceptrons 
}
\author{
\textsc{Mihailo Stojnic}
\\
\\
{School of Industrial Engineering}\\
{Purdue University, West Lafayette, IN 47907} \\
{e-mail: {\tt mstojnic@purdue.edu}} }
\date{}
\maketitle

\centerline{{\bf Abstract}} \vspace*{0.1in}

Perceptrons have been known for a long time as a promising tool within the neural networks theory. The analytical treatment for a special class of perceptrons started in seminal work of Gardner \cite{Gar88}. Techniques initially employed to characterize perceptrons relied on a statistical mechanics approach. Many of such predictions obtained in \cite{Gar88} (and in a follow-up \cite{GarDer88}) were later on established rigorously as mathematical facts (see, e.g. \cite{SchTir02,SchTir03,TalBook,StojnicGardGen13,StojnicGardSphNeg13,StojnicGardSphErr13}). These typically related to spherical perceptrons. A lot of work has been done related to various other types of perceptrons. Among the most challenging ones are what we will refer to as the discrete perceptrons. An introductory statistical mechanics treatment of such perceptrons was given in \cite{GutSte90}. Relying on results of \cite{Gar88}, \cite{GutSte90} characterized many of the features of several types of discrete perceptrons. We in this paper, consider a similar subclass of discrete perceptrons and provide a mathematically rigorous set of results related to their performance. As it will turn out, many of the statistical mechanics predictions obtained for discrete predictions will in fact appear as mathematically provable bounds. This will in a way emulate a similar type of behavior we observed in \cite{StojnicGardGen13,StojnicGardSphNeg13,StojnicGardSphErr13} when studying spherical perceptrons.

\vspace*{0.25in} \noindent {\bf Index Terms: Discrete perceptrons; storage capacity}.

\end{singlespace}

\section{Introduction}
\label{sec:back}

In last several decades there has been a lot of great work related to an analytical characterization of neural networks performance. While the neural networks have been known for quite some time it is probably with the appearance of powerful statistical mechanics techniques that incredibly results related to characterization of their performance started appearing. Of course, since the classical perceptrons are among the simplest and most fundamental tools within the frame of neural networks theory, it is a no surprise that among the very first analytical characterizations were the ones related to them. Probably the most successful one and we would say the most widely known one is the seminal approach of Gardner, developed in \cite{Gar88} and complemented in a follow-up \cite{GarDer88}. There, Gardner adapted by that time already well-known replica approach so that it can treat almost any feature of various perceptron models. She started the story of course with probably the simplest possible case, namely the spherical perceptron. Then in \cite{Gar88} she and in \cite{GarDer88} she and Derrida proceeded with fairly accurate predictions/approximations for its storage capacities in several different scenarios: positive thresholds (we will often referred to such perceptrons as the positive spherical perceptrons), negative thresholds, correlated/uncorrelated patterns, patterns stored incorrectly and many others.

While these predictions were believed to be either exact in some cases or fairly good approximations in others, they remained quite a mathematical challenge for a long time. Somewhat paradoxically, one may though say that the first successful confirmation of some of the results from \cite{Gar88,GarDer88} had actually arrived a long time before they appeared. Namely, for a special case of spherical perceptrons with zero-thresholds, the storage capacity was already known either explicitly within the neural networks community or within pure mathematics (see, e.g. \cite{Schlafli,Cover65,Winder,Winder61,Wendel62,Cameron60,Joseph60,BalVen87,Ven86}). However, the first real confirmation of the complete treatment presented in \cite{Gar88} appeared in \cite{SchTir02,SchTir03}. There the authors were able to confirm the predictions made in \cite{Gar88} related to the storage capacity of the positive spherical perceptrons. Moreover, they confirmed that the prediction related to the volume of the bond strengths that satisfies the perceptron dynamics presented in \cite{Gar88} is also correct. Later on, in \cite{TalBook} Talagrand reconfirmed these predictions through a somewhat different approach. In our own work \cite{StojnicGardGen13} we also presented a simple framework that can be used to confirm many of the storage capacity predictions made in \cite{Gar88}. Moreover, in \cite{StojnicGardSphNeg13} we confirmed that the results presented in \cite{Gar88} related to the negative spherical perceptrons are rigorous upper bounds that in certain range of problem parameters may even be lowered. Along the same lines we then in \cite{StojnicGardSphErr13} attacked a bit harder spherical perceptron type of problem that relates to their functioning as erroneous storage memories. This problem was initially treated in \cite{GarDer88} through an extension of the replica approach utilized in \cite{Gar88}. The predictions obtained based on such an approach were again proved as rigorous upper bounds in \cite{StojnicGardSphErr13}. Moreover, \cite{StojnicGardSphErr13} hinted that while the predictions made in \cite{GarDer88} are rigorous upper bounds one may even be able to lower them in certain range of parameters of interest.

Of course, as one may note all the above mentioned initial treatments relate to the so-called spherical perceptrons. These are long believed to be substantially easier for an analytical treatment than some other classes of perceptrons. On the other hand, we believe that among the most difficult for an analytical treatment are the ones that we will call discrete perceptrons. While we will below give a detailed description of what we will mean by discrete perceptrons, we would like to just mention here that an introductory treatment of such perceptrons was already started in \cite{Gar88,GarDer88}. There it was demonstrated that framework designed to cover the spherical perceptron can in fact be used to obtain predictions for many other perceptrons as well and among them certainly for what we will call $\pm 1$ discrete perceptrons. However, as already observed in \cite{Gar88,GarDer88} it may happen that the treatment of such perceptrons may be substantially more difficult than the spherical ones. To be a bit more specific, an initial set of results obtained for the storage capacity in the simple zero-thresholds case indicated that the variant of the framework given in \cite{Gar88} may not be able to match even the simple combinatorial results one can obtain in such a case. As a result it was hinted that a more advanced version of the framework from \cite{Gar88} may be needed. In \cite{GutSte90} the authors went a bit further and considered various other types of discrete perceptrons. For many of them they were able to provide a similar set of predictions given in \cite{Gar88} for the simple spherical and $\pm 1$ ones. Moreover, they hinted at a potential way that can be used to bridge some of deficiencies that the predictions given in \cite{Gar88} may have. In this paper we will also study several discrete perceptrons. On top of that we will cover a ``not so discrete case" which in a sense is a limiting case of some of the discrete cases studied in \cite{GutSte90} and itself was also studied in \cite{GutSte90}. The framework that we will present will rigorously confirm that the results related to these classes of perceptrons obtained in \cite{GutSte90} relying on the replica symmetry approach of \cite{Gar88} are in fact rigorous upper bounds on their true values. For the above mentioned ``not so discrete case" it will turn out that the predictions made in \cite{GutSte90} can in fact be proven as exact.

Before going into the details of our approach we will recall on the basic definitions related to the perceptrons and needed for its analysis. Also, to make the presentation easier to follow we find it useful to briefly sketch how the rest of the paper is organized. In Section \ref{sec:mathsetupper} we will, as mentioned above, introduce a more formal mathematical description of how a perceptron operates. Along the same lines we will formally present the sevearl classes/types of perceptrons that we will study in later sections. In Section \ref{sec:knownres} we will present several results that are known for the classical spherical perceptron as some of them we will actually need to establish the main results of this paper as well. In Sections \ref{sec:discper}, \ref{sec:01per}, and \ref{sec:boxper} we will discuss the three types of perceptrons that we plan to study in great detail in this paper. Finally, in Section \ref{sec:conc} we will discuss obtained results and present several concluding remarks.

\section{Perceptrons as mathematical problems}
\label{sec:mathsetupper}

To make this part of the presentation easier to follow we will try to introduce all important features of the perceptron that we will need here by closely following what was done in \cite{Gar88} (and for that matter in our recent work \cite{StojnicGardGen13,StojnicGardSphNeg13,StojnicGardSphErr13}). So, as in \cite{Gar88}, we start with the following dynamics:
\begin{equation}
H_{ik}^{(t+1)}=\mbox{sign} (\sum_{j=1,j\neq k}^{n}H_{ij}^{(t)}X_{jk}-T_{ik}).\label{eq:defdyn}
\end{equation}
Following \cite{Gar88} for any fixed $1\leq i\leq m$ we will call each $H_{ij},1 \leq j\leq n $, the icing spin, i.e. $H_{ij}\in\{-1,1\},\forall i,j$. Continuing further with following \cite{Gar88}, we will call $X_{jk},1\leq j\leq n$, the interaction strength for the bond from site $j$ to site $i$. To be in a complete agreement with \cite{Gar88}, we in (\ref{eq:defdyn}) also introduced quantities $T_{ik},1\leq i\leq m,1\leq k\leq n$. $T_{ik}$is typically called the threshold for site $k$ in pattern $i$. However, to make the presentation easier to follow, we will typically assume that $T_{ik}=0$. Without going into further details we will mention though that all the results that we will present below can be easily modified so that they include scenarios where $T_{ik}\neq 0$.

Now, the dynamics presented in (\ref{eq:defdyn}) works by moving from a $t$ to $t+1$ and so on (of course one assumes an initial configuration for say $t=0$). Moreover, the above dynamics will have a fixed point if say there are strengths $X_{jk},1\leq j\leq n,1\leq k\leq m$, such that for any $1\leq i\leq m$
\begin{eqnarray}
& & H_{ik}\mbox{sign} (\sum_{j=1,j\neq k}^{n}H_{ij}X_{jk}-T_{ik})=1\nonumber \\
& \Leftrightarrow & H_{ik}(\sum_{j=1,j\neq k}^{n}H_{ij}X_{jk}-T_{ik})>0,1\leq j\leq n,1\leq k\leq n.\label{eq:defdynfp}
\end{eqnarray}
Of course, the above is a well known property of a very general class of dynamics. In other words, unless one specifies the interaction strengths the generality of the problem essentially makes it easy. After considering the general scenario introduced above, \cite{Gar88} then proceeded and specialized it to a particular case which amounts to including spherical restrictions on $X$. A more mathematical description of such restrictions considered in \cite{Gar88} essentially boils down to the following constraints
\begin{equation}
\sum_{j=1}^{n}X_{ji}^2=1,1\leq i\leq n.\label{eq:cosntXsph}
\end{equation}
These were of course the same restrictions/constraints considered in a series of our own work \cite{StojnicGardGen13,StojnicGardSphErr13,StojnicGardSphNeg13}. In this paper however, we will focus on a set of what we will call discrete restirctions/conctraints. While the methods that we will present below will be fairly powerful to handle many different discrete restrictions we will to avoid an overloading and for clarity purposes here present the following two types of discrete constraints.
\begin{eqnarray}
X_{ji}& \in & \left \{-\frac{1}{\sqrt{n}},\frac{1}{\sqrt{n}}\right \},1\leq i\leq n,1\leq j\leq m\nonumber \\
X_{ji}& \in & \left \{0,\frac{1}{\sqrt{n}}\right \},1\leq i\leq n,1\leq j\leq m.\label{eq:cosntXdisc}
\end{eqnarray}
We will call the perceptron operating with the first set of constraints given in (\ref{eq:cosntXdisc}) the $\pm 1$ perceptron (in fact we may often refer to the bond strengths $X$ in such a perceptron as the ones from $\{-1,1\}$ set although for the scaling purposes we assumed the above more convenient $\left \{-\frac{1}{\sqrt{n}},\frac{1}{\sqrt{n}}\right \}$ set). Analogously, we will call the perceptron operating with the second set of constraints given in (\ref{eq:cosntXdisc}) the $0/1$ perceptron. Moreover, we will also consider a third type of the perceptron that operates with the following constraints on the bond strengths
\begin{equation}
X_{ji}\in [-\frac{1}{\sqrt{n}},\frac{1}{\sqrt{n}}],1\leq i\leq n,1\leq j\leq m.\label{eq:cosntXbox}
\end{equation}
We will refer to the perceptron operating with the set of constraints given in (\ref{eq:cosntXbox}) the box-constrained perceptron.

The fundamental question that one typically considers then is the so-called storage capacity of the above dynamics or alternatively a neural network that it would represent (of course this is exactly one of the questions considered in \cite{Gar88}). Namely, one then asks how many patterns $m$ ($i$-th pattern being $H_{ij},1\leq j\leq n$) one can store so that there is an assurance that they are stored in a stable way. Moreover, since having patterns being fixed points of the above introduced dynamics is not enough to insure having a finite basin of attraction one often may impose a bit stronger threshold condition
\begin{eqnarray}
& & H_{ik}\mbox{sign} (\sum_{j=1,j\neq k}^{n}H_{ij}X_{jk}-T_{ik})=1\nonumber \\
& \Leftrightarrow & H_{ik}(\sum_{j=1,j\neq k}^{n}H_{ij}X_{jk}-T_{ik})>\kappa,1\leq j\leq n,1\leq k\leq n,\label{eq:defdynfpstr}
\end{eqnarray}
where typically $\kappa$ is a positive number. We will refer to a perceptron governed by the above dynamics and coupled with the spherical restrictions and a positive threshold $\kappa$ as the positive spherical perceptron (alternatively, when $\kappa$ is negative we would refer to it as the negative spherical perceptron; for such a perceptron and resulting mathematical problems/results see e.g. \cite{StojnicGardSphNeg13}).

Also, we should mentioned that beyond the above mentioned cases many other variants of the neural network models are possible from a purely mathematical perspective. Moreover, many of them have found applications in various other fields as well. For example, a nice set of references that contains a collection of results related to various aspects of different neural networks models and their bio- and many other applications is  \cite{AgiAnnBarCooTan13a,AgiAnnBarCooTan13b,AgiBarBarGalGueMoa12,AgiBarGalGueMoa12,AgiAstBarBurUgu12,StojnicAsymmLittBnds11,BruParRit92}. We should also mention that while we chose here a particular set of neural network models, the results that we will present below can be adapted to be of use in pretty much any other known model. Our goal here is to try to keep the presentation somewhat self-contained, clear, and without too much of overloading. Because of that we selected only a small number of cases for which we will present the concrete results. A treatment of many others we will present elsewhere.

\section{Known results}
\label{sec:knownres}

As mentioned above, our main interest in this paper will be studying what we call discrete perceptrons. However, many of the results that we will present will lean either conceptually or even purely analytically on many results that we created for the so-called spherical perceptrons. In fact, quite a few technical details that we will need here we already needed when treating various aspects of the spherical perceptrons, see e.g. \cite{StojnicGardSphNeg13,StojnicGardGen13,StojnicGardSphErr13}. In that sense we will find it useful to have quite handy some of the well-known spherical perceptron results readily available. So, before proceeding with the problems that we will study here in great detail we will first recall on several results known for the standard spherical perceptron.

In the first of the subsections below we will hence look at the spherical perceptrons, and in the following one we will then present a few results known for the discrete perceptrons. That way it will also be easier to later on properly position the results we intend to present here within the scope of what is already known.

\subsection{Spherical perceptron}
\label{sec:sphper}

We should preface this brief presentation of the known results by mentioning that a way more is known that what we will present below. However, we will restrict ourselves to the facts that we deem are of most use for the presentation that will follow in later sections.

\subsubsection{Statistical mechanics}
\label{sec:statmech}

We of course start with recalling on what was presented in \cite{Gar88}. In \cite{Gar88} a replica type of approach was designed and based on it a characterization of the storage capacity was presented. Before showing what exactly such a characterization looks like we will first formally define it. Namely, throughout the paper we will assume the so-called linear regime, i.e. we will consider the so-called \emph{linear} scenario where the length and the number of different patterns, $n$ and $m$, respectively are large but proportional to each other. Moreover, we will denote the proportionality ratio by $\alpha$ (where $\alpha$ obviously is a constant independent of $n$) and will set
\begin{equation}
m=\alpha n.\label{eq:defmnalpha}
\end{equation}
Now, assuming that $H_{ij},1\leq i\leq m,1\leq j\leq n$, are i.i.d. symmetric Bernoulli random variables, \cite{Gar88}, using the replica approach, gave the following estimate for $\alpha$ so that (\ref{eq:defdynfpstr}) holds with overwhelming probability (under overwhelming probability we will in this paper assume a probability that is no more than a number exponentially decaying in $n$ away from $1$)
\begin{equation}
\alpha_c(\kappa)=(\frac{1}{\sqrt{2\pi}}\int_{-\kappa}^{\infty}(z+\kappa)^2e^{-\frac{z^2}{2}}dz)^{-1}.\label{eq:garstorcap}
\end{equation}
Based on the above characterization one then has that $\alpha_c$ achieves its maximum over positive $\kappa$'s as $\kappa\rightarrow 0$. One in fact easily then has
\begin{equation}
\lim_{\kappa\rightarrow 0}\alpha_c(\kappa)=2.\label{eq:garstorcapk0}
\end{equation}
Also, to be completely exact, in \cite{Gar88}, it was predicted that the storage capacity relation from (\ref{eq:garstorcap}) holds for the range $\kappa\geq 0$.

\subsubsection{Rigorous results -- positive spherical perceptron ($\kappa\geq 0$)}
\label{sec:posphper}

The result given in (\ref{eq:garstorcapk0}) is of course well known and has been rigorously established either as a pure mathematical fact or even in the context of neural networks and pattern recognition \cite{Schlafli,Cover65,Winder,Winder61,Wendel62,Cameron60,Joseph60,BalVen87,Ven86}. In a more recent work \cite{SchTir02,SchTir03,TalBook} the authors also considered the storage capacity of the spherical perceptron and established that when $\kappa\geq 0$ (\ref{eq:garstorcap}) also holds. In our own work \cite{StojnicGardGen13} we revisited the storage capacity problems and presented an alternative mathematical approach that was also powerful enough to reestablish the storage capacity prediction given in (\ref{eq:garstorcap}). We below formalize the results obtained in \cite{SchTir02,SchTir03,TalBook,StojnicGardGen13}.

\begin{theorem} \cite{SchTir02,SchTir03,TalBook,StojnicGardGen13}
Let $H$ be an $m\times n$ matrix with $\{-1,1\}$ i.i.d.Bernoulli components. Let $n$ be large and let $m=\alpha n$, where $\alpha>0$ is a constant independent of $n$. Let $\alpha_c$ be as in (\ref{eq:garstorcap}) and let $\kappa\geq 0$ be a scalar constant independent of $n$. If $\alpha>\alpha_c$ then with overwhelming probability there will be no $\x$ such that $\|\x\|_2=1$ and (\ref{eq:defdynfpstr}) is feasible. On the other hand, if $\alpha<\alpha_c$ then with overwhelming probability there will be an $\x$ such that $\|\x\|_2=1$ and (\ref{eq:defdynfpstr}) is feasible.
\label{thm:SchTirTalSto}
\end{theorem}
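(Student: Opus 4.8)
The plan is to prove the two directions of Theorem~\ref{thm:SchTirTalSto} separately via a Gaussian comparison argument, exactly of the flavor used for the spherical perceptron in \cite{StojnicGardGen13}. The feasibility of (\ref{eq:defdynfpstr}) with $\|\x\|_2=1$ is equivalent to asking whether the random polytope $\{\x: \|\x\|_2=1,\ H\x\geq\kappa\1\}$ (after absorbing the signs $H_{ik}$ into $H$, which leaves the distribution of $H$ invariant) is nonempty. First I would encode this as a min-max optimization: define $\xi(H)=\min_{\|\x\|_2=1}\max_{\y\geq\0,\|\y\|_2=1}\y^T(\kappa\1-H\x)$, so that feasibility fails precisely when $\xi(H)>0$ and holds when the sign of the corresponding quantity goes the other way. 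Concentration of $\xi(H)$ around its mean (via Gaussian Lipschitz concentration, since $H$ can be replaced by a Gaussian matrix $G$ at the cost of lower-order terms, or one works with $G$ throughout and transfers at the end) reduces everything to computing $\Exp\,\xi(G)$ and determining its sign as a function of $\alpha$ and $\kappa$.

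Next I would apply the Gordon comparison inequality (the min-max/escape-through-the-mesh machinery) to the bilinear Gaussian form $\y^T G\x$. This replaces $\Exp\,\min_{\x}\max_{\y}(\kappa\y^T\1-\y^T G\x)$ by the decoupled expression built from two independent Gaussian vectors $\g\in\mathbb{R}^m$ and $\h\in\mathbb{R}^n$: schematically $\Exp\,\min_{\|\x\|_2=1}\max_{\y\geq\0,\|\y\|_2=1}\big(\kappa\y^T\1-\|\x\|_2\g^T\y-\|\y\|_2\h^T\x\big)$. The inner maximization over $\y$ in the positive orthant yields a term proportional to $\|(\kappa\1-\g)_+\|_2$ (the positive part, because of the sign constraint $\y\geq\0$), while the minimization over $\x$ on the sphere yields $\|\h\|_2$. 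Passing to the large-$n$ limit, $\|\h\|_2\approx\sqrt{n}$ and $\frac{1}{n}\|(\kappa\1-\g)_+\|_2^2\to\alpha\cdot\frac{1}{\sqrt{2\pi}}\int_{-\kappa}^{\infty}(z+\kappa)^2 e^{-z^2/2}\,dz = \alpha/\alpha_c(\kappa)$ by the law of large numbers. Comparing the two, the sign of the limiting objective flips exactly at $\alpha=\alpha_c(\kappa)$: for $\alpha>\alpha_c$ the Gordon lower bound is strictly positive, forcing infeasibility with overwhelming probability, and for $\alpha<\alpha_c$ one uses the matching upper-bound (second) Gordon inequality, or a direct probabilistic construction / the known converse in \cite{StojnicGardGen13}, to exhibit a feasible $\x$.

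For the infeasibility direction one only needs the ``easy'' side of Gordon's theorem (a one-sided stochastic domination), combined with the concentration statement, so that $\Prob(\xi(G)\leq 0)$ is exponentially small when $\alpha>\alpha_c$; the positivity of the limiting value together with the exponential concentration rate gives the overwhelming-probability claim. The feasibility direction is the more delicate one: here I would invoke the full (both-sided) Gordon min-max inequality, which requires the objective to have the appropriate min-max/saddle structure, and then show the limiting value is strictly negative, so that with overwhelming probability $\xi(G)<0$ and a feasible point exists. The main obstacle, as usual with this method, is the achievability side: making the Gordon upper bound tight enough to conclude actual feasibility rather than merely a favorable expectation, which typically means carefully handling the positive-part nonlinearity introduced by the constraint $\y\geq\0$ and controlling the deviations uniformly; but since this is precisely the result already established in \cite{SchTir02,SchTir03,TalBook,StojnicGardGen13}, for the purposes of this paper it suffices to cite it and I would only sketch the comparison-inequality computation above, as the later sections will reuse exactly this template with the sphere $\|\x\|_2=1$ replaced by the discrete or box constraint sets.
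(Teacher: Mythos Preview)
Your proposal is correct and essentially matches the approach of \cite{StojnicGardGen13}, which is precisely what the paper cites: the paper's own ``proof'' of this theorem is simply the line ``Presented in various forms in \cite{SchTir02,SchTir03,TalBook,StojnicGardGen13},'' so there is nothing further to compare. Your Gordon min--max sketch (encoding feasibility via $\xi(H)$, decoupling into $\|(\kappa\1-\g)_+\|_2$ and $\|\h\|_2$, and reading off the threshold from the law of large numbers) is exactly the template the paper reproduces in Theorem~\ref{thm:Stoc30} and then reuses in Sections~\ref{sec:discper}--\ref{sec:boxper}, so your closing remark that later sections recycle this computation with the sphere replaced by discrete/box constraints is spot on.
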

\begin{proof}
Presented in various forms in \cite{SchTir02,SchTir03,TalBook,StojnicGardGen13}.
\end{proof}

As mentioned earlier, the results given in the above theorem essentially settle the storage capacity of the positive spherical perceptron or the Gardner problem in a statistical sense (it is rather clear but we do mention that the overwhelming probability statement in the above theorem is taken with respect to the randomness of $H$). However, they strictly speaking relate only to the \emph{positive} spherical perceptron. It is not clear if they would automatically translate to the case of the negative spherical perceptron. As we hinted earlier, the case of the negative spherical perceptron ($\kappa<0$) may be more of interest from a purely mathematical point of view than it is from say the neural networks point of view. Nevertheless, such a mathematical problem may turn out to be a bit harder than the one corresponding to the standard positive case. In fact, in \cite{TalBook}, Talagrand conjectured (conjecture 8.4.4) that the above mentioned $\alpha_c$ remains an upper bound on the storage capacity even when $\kappa<0$, i.e. even in the case of the negative spherical perceptron. Such a conjecture was confirmed in our own work \cite{StojnicGardGen13}. In the following subsection we will briefly summarize what in fact was shown in \cite{StojnicGardGen13}.

\begin{figure}[htb]
\centering
\centerline{\epsfig{figure=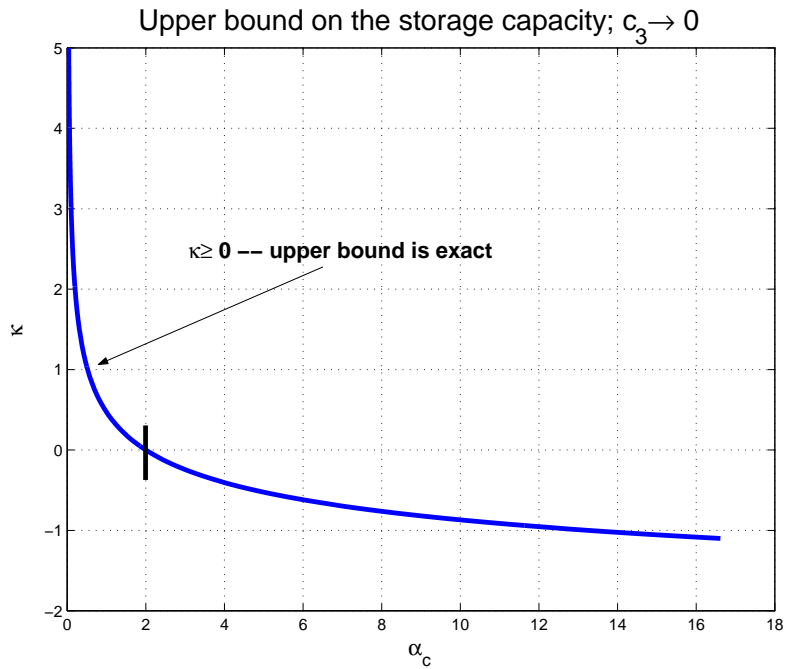,width=10.5cm,height=9cm}}
\caption{$\kappa$ as a function of $\alpha$}
\label{fig:alfackappa}
\end{figure}

\subsubsection{Rigorous results -- negative spherical perceptron ($\kappa< 0$)}
\label{sec:rigresnegsphper}

In our recent work \cite{StojnicGardSphNeg13} we went a step further and considered the negative version of the standard spherical perceptron. While the results that we will present later on in Sections \ref{sec:discper}, \ref{sec:01per}, and \ref{sec:boxper} will relate to any $\kappa$ our main concern will be from a neural network point of view and consequently the emphasis will be on the positive case, i.e. to $\kappa\geq 0$ scenario. Still, in our own view the results related to the negative spherical perceptron are important as they hint that already in the spherical case things may not be as easy as they may seem to be based on the results of \cite{Gar88,SchTir02,SchTir03,TalBook,StojnicGardGen13} for the positive spherical perceptron.

Moreover, a few technical details needed for presenting results in later section were already observed in \cite{StojnicGardGen13,StojnicGardSphNeg13} and we find it convenient to recall on them while at the same time revisiting the negative spherical perceptron. This will in our view substantially facilitate the exposition that will follow.

We first recall that in \cite{StojnicGardSphNeg13} we studied the so-called uncorrelated case of the spherical perceptron (more on an equally important correlated case can be found in e.g. \cite{StojnicGardGen13,Gar88}). This is the same scenario that we will study here (so the simplifications that we made in \cite{StojnicGardSphNeg13} and that we are about to present below will be in place later on as well). In the uncorrelated case, one views all patterns $H_{i,1:n},1\leq i\leq m$, as uncorrelated (as expected, $H_{i,1:n}$ stands for vector $[H_{i1},H_{i2},\dots,H_{in}]$). Now, the following becomes the corresponding version of the question of interest mentioned above: assuming that $H$ is an $m\times n$ matrix with i.i.d. $\{-1,1\}$ Bernoulli entries and that $\|\x\|_2=1$, how large $\alpha=\frac{m}{n}$ can be so that the following system of linear inequalities is satisfied with overwhelming probability
\begin{equation}
H\x\geq \kappa.\label{eq:defprobucor}
\end{equation}
This of course is the same as if one asks how large $\alpha$ can be so that the following optimization problem is feasible with overwhelming probability
\begin{eqnarray}
& & H\x\geq \kappa\nonumber \\
& & \|\x\|_2=1.\label{eq:defprobucor1}
\end{eqnarray}
To see that (\ref{eq:defprobucor}) and (\ref{eq:defprobucor1}) indeed match the above described fixed point condition it is enough to observe that due to statistical symmetry one can assume $H_{i1}=1,1\leq i\leq m$. Also the constraints essentially decouple over the columns of $X$ (so one can then think of $\x$ in (\ref{eq:defprobucor}) and (\ref{eq:defprobucor1}) as one of the columns of $X$). Moreover, the dimension of $H$ in (\ref{eq:defprobucor}) and (\ref{eq:defprobucor1}) should be changed to $m\times (n-1)$; however, since we will consider a large $n$ scenario to make writing easier we keep the dimension as $m\times n$. Also, as mentioned to a great extent in \cite{StojnicGardGen13,StojnicGardSphErr13,StojnicGardSphErr13}, we will, without a loss of generality, treat $H$ in (\ref{eq:defprobucor1}) as if it has i.i.d. standard normal components. Moreover, in \cite{StojnicGardGen13} we also recognized that (\ref{eq:defprobucor1}) can be rewritten as the following optimization problem
\begin{eqnarray}
\xi_n=\min_{\x} \max_{\lambda\geq 0} & &  \kappa\lambda^T\1- \lambda^T H\x \nonumber \\
\mbox{subject to} & & \|\lambda\|_2= 1\nonumber \\
& & \|\x\|_2=1,\label{eq:uncorminmax}
\end{eqnarray}
where $\1$ is an $m$-dimensional column vector of all $1$'s. Clearly, if $\xi_n\leq 0$ then (\ref{eq:defprobucor1}) is feasible. On the other hand, if $\xi_n>0$ then (\ref{eq:defprobucor1}) is not feasible. That basically means that if we can probabilistically characterize the sign of $\xi_n$ then we could have a way of determining $\alpha$ such that $\xi_n\leq 0$. That is exactly what we have done in \cite{StojnicGardGen13} on an ultimate level for $\kappa\geq 0$ and on a say upper-bounding level for $\kappa<0$. Relying on the strategy developed in \cite{StojnicRegRndDlt10,StojnicGorEx10} and on a set of results from \cite{Gordon85,Gordon88} we in \cite{StojnicGardGen13} proved the following theorem that essentially extends Theorem \ref{thm:SchTirTalSto} to the $\kappa<0$ case and thereby resolves Conjecture 8.4.4 from \cite{TalBook} in positive:

\begin{theorem} \cite{StojnicGardGen13}
Let $H$ be an $m\times n$ matrix with i.i.d. standard normal components. Let $n$ be large and let $m=\alpha n$, where $\alpha>0$ is a constant independent of $n$. Let $\xi_n$ be as in (\ref{eq:uncorminmax}) and let $\kappa$ be a scalar constant independent of $n$. Let all $\epsilon$'s be arbitrarily small constants independent of $n$. Further, let $\g_i$ be a standard normal random variable and set
\begin{equation}
f_{gar}(\kappa)=\frac{1}{\sqrt{2\pi}}\int_{-\kappa}^{\infty}(\g_i+\kappa)^2e^{-\frac{\g_i^2}{2}}d\g_i
=\frac{\kappa e^{-\frac{\kappa^2}{2}}}{\sqrt{2\pi}}+\frac{(\kappa^2+1)\mbox{erfc}\left ( -\frac{\kappa}{\sqrt{2}}\right )}{2}.\label{eq:fgarlemmaunncorlb}
\end{equation}
Let $\xi_n^{(l)}$ and $\xi_n^{(u)}$ be scalars such that
\begin{eqnarray}
(1-\epsilon_{1}^{(m)})\sqrt{\alpha f_{gar}(\kappa)}-(1+\epsilon_{1}^{(n)})-\epsilon_{5}^{(g)} & > & \frac{\xi_n^{(l)}}{\sqrt{n}}\nonumber \\
(1+\epsilon_{1}^{(m)})\sqrt{\alpha f_{gar}(\kappa)}-(1-\epsilon_{1}^{(n)})+\epsilon_{5}^{(g)} & < & \frac{\xi_n^{(u)}}{\sqrt{n}}.\label{eq:condxinthmstoc30}
\end{eqnarray}
If $\kappa\geq 0$ then
\begin{equation}
 \lim_{n\rightarrow\infty}P(\xi_n^{(l)}\leq \xi_n\leq \xi_n^{(u)})=\lim_{n\rightarrow\infty}P(\min_{\|\x\|_2=1}\max_{\|\lambda\|_2=1,\lambda_i\geq 0}(\xi_n^{(l)}\leq \kappa\lambda^T\1-\lambda^TH\x)\leq \xi_n^{(u)})\geq 1. \label{eq:probthmstoc30poskappa}
\end{equation}
Moreover, if $\kappa< 0$ then
\begin{equation}
 \lim_{n\rightarrow\infty}P(\xi_n\geq \xi_n^{(l)})=\lim_{n\rightarrow\infty}P(\min_{\|\x\|_2=1}\max_{\|\lambda\|_2=1,\lambda_i\geq 0}( \kappa\lambda^T\1-\lambda^TH\x)\geq \xi_n^{(u)})\geq 1. \label{eq:probthmstoc30negkappa}
\end{equation}
\label{thm:Stoc30}
\end{theorem}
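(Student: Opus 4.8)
\emph{Proof proposal.}

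The plan is to sandwich $\xi_n/\sqrt n$ between two quantities built out of two independent standard Gaussian vectors --- an $m$-dimensional $\g$ and an $n$-dimensional $\h$ --- by means of Gordon's comparison inequality, and then to promote the resulting estimates to the high-probability statements (\ref{eq:probthmstoc30poskappa})--(\ref{eq:probthmstoc30negkappa}) via Gaussian concentration of measure. Throughout I would take $H$ with i.i.d. $\N(0,1)$ entries (the reduction recalled just above the theorem) and would repeatedly use that $-H$ has the same law as $H$, so that the $\xi_n$ of (\ref{eq:uncorminmax}) is distributed as $\min_{\|\x\|_2=1}\max_{\|\lambda\|_2=1,\lambda\geq0}(\kappa\lambda^T\1+\lambda^TH\x)$; I write $\v_+$ for the componentwise positive part of a vector $\v$.

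\emph{Step 1: the lower bound on $\xi_n$ (valid for every $\kappa$; it is all of (\ref{eq:probthmstoc30negkappa}) and half of (\ref{eq:probthmstoc30poskappa})).} Replacing the two unit spheres by sufficiently fine $\epsilon$-nets --- so that Gordon's inequality applies to the resulting finite index families and the net error is absorbed by the Lipschitz continuity of $(\x,\lambda)\mapsto\lambda^TH\x$ --- I would compare the centered Gaussian process $(\x,\lambda)\mapsto\lambda^TH\x+\gamma$, where $\gamma$ is an independent standard Gaussian (so that it only shifts the final $\min$--$\max$ by the harmless $O(1)$ quantity $\gamma$), with the decoupled process $(\x,\lambda)\mapsto\g^T\lambda+\h^T\x$. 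A short computation of second moments shows these two processes have equal variances, equal covariances whenever $\x$ is held fixed, and that the first is at least as strongly correlated across distinct values of $\x$ --- which is exactly the ordering under which Gordon's inequality (the results of \cite{Gordon85,Gordon88}, used as in \cite{StojnicRegRndDlt10,StojnicGorEx10}) bounds the $\min$--$\max$ of the first process below by that of the second. Adding the deterministic term $\kappa\lambda^T\1$ to both, this yields
\begin{equation}
\Prob\left(\xi_n\geq\min_{\|\x\|_2=1}\ \max_{\|\lambda\|_2=1,\lambda\geq0}\left(\kappa\lambda^T\1+\g^T\lambda+\h^T\x\right)\right)\geq 1-o(1).
\end{equation}
The decoupled $\min$--$\max$ separates: $\min_{\|\x\|_2=1}\h^T\x=-\|\h\|_2$ and $\max_{\|\lambda\|_2=1,\lambda\geq0}\lambda^T(\kappa\1+\g)=\|(\kappa\1+\g)_+\|_2$, so it equals $\|(\kappa\1+\g)_+\|_2-\|\h\|_2$. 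Now $\Exp\|(\kappa\1+\g)_+\|_2^2=m\,f_{gar}(\kappa)=\alpha n\,f_{gar}(\kappa)$ by the definition of $f_{gar}$ in (\ref{eq:fgarlemmaunncorlb}), while $\Exp\|\h\|_2^2=n$; since both $\|(\kappa\1+\g)_+\|_2$ and $\|\h\|_2$ are $1$-Lipschitz functions of their Gaussian arguments, concentration of measure gives $\|(\kappa\1+\g)_+\|_2\geq(1-\epsilon_1^{(m)})\sqrt{\alpha f_{gar}(\kappa)}\,\sqrt n$ and $\|\h\|_2\leq(1+\epsilon_1^{(n)})\sqrt n$ with overwhelming probability. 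Combining these three facts, and slackening by $\epsilon_5^{(g)}\sqrt n$ to reach the slightly smaller $\xi_n^{(l)}$ of (\ref{eq:condxinthmstoc30}), gives $\xi_n\geq\xi_n^{(l)}$ with overwhelming probability.

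\emph{Step 2: the matching upper bound when $\kappa\geq0$.} As applied to the $\min$--$\max$ of Step 1, Gordon's inequality only bounds $\xi_n$ from below, so the upper bound needs a different device, and this is where $\kappa\geq0$ enters. I would first note that $\phi(\x):=\max_{\|\lambda\|_2=1,\lambda\geq0}\lambda^T(\kappa\1-H\x)$ is convex in $\x$, and pass to the relaxation $\widetilde\xi_n:=\min_{\|\x\|_2\leq1}\phi(\x)\leq\xi_n$. For $\kappa\geq0$ one has $\widetilde\xi_n=\xi_n$ with overwhelming probability: where (\ref{eq:defprobucor1}) is feasible, feasibility forces $\phi$ to be unbounded below along a feasible direction, so the convex $\phi$ has no interior minimizer and attains its ball-minimum on the sphere; where (\ref{eq:defprobucor1}) is infeasible, $(\kappa\1-H\x)_+\neq\0$ for every $\x$, so $\phi(\x)=\|(\kappa\1-H\x)_+\|_2=\max_{\|\lambda\|_2\leq1,\lambda\geq0}\lambda^T(\kappa\1-H\x)$ and both optimizations then run over convex compact sets. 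In either case $\widetilde\xi_n$ is the value of a convex--concave saddle problem, so Sion's minimax theorem lets me recast it as a $\max$--$\min$, namely $\max_{\lambda}(\kappa\lambda^T\1-\|H^T\lambda\|_2)$ over the appropriate convex set of $\lambda$'s, and Gordon's inequality now applies to this $\max$--$\min$ in the direction that bounds it from \emph{above}. Comparing once more against the decoupled surrogate and carrying out the (standard, ``escape-through-a-mesh''-type) inner minimization, the surrogate's value concentrates --- via the same $f_{gar}$-identity and norm estimates as in Step 1 --- around $\sqrt n(\sqrt{\alpha f_{gar}(\kappa)}-1)$; and since $\xi_n$ is itself $1$-Lipschitz in $H$ with respect to the Frobenius norm (being a supremum/infimum of $\lambda^TH\x$ over unit spheres), concentration then gives $\xi_n=\widetilde\xi_n\leq\xi_n^{(u)}$ with overwhelming probability, completing (\ref{eq:probthmstoc30poskappa}).

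\emph{Main obstacle.} The crux is Step 2 --- establishing that for $\kappa\geq0$ the lower bound of Step 1 is actually attained. The sign hypothesis is used there in an essential way, through the convex--concave reformulation of the relaxed problem, which rests on $\phi$ being unbounded below in the feasible regime and on $(\kappa\1-H\x)_+$ being nonzero in the infeasible regime; both break down when $\kappa<0$. Hence for $\kappa<0$ only the one-sided conclusion (\ref{eq:probthmstoc30negkappa}) survives, i.e. that $\alpha_c(\kappa)$ stays a rigorous \emph{upper} bound on the true storage capacity, which is Conjecture 8.4.4 of \cite{TalBook}.
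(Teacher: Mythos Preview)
Your overall architecture---Gordon's comparison for the lower bound, then a convexification/dualization plus Gordon in the reverse direction for the upper bound when $\kappa\geq0$---is exactly the route the paper (and \cite{StojnicGardGen13}) takes; Step~1 is clean and matches the paper's Lemma~\ref{lemma:negproblemma} and the analysis following it.

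Step~2, however, has a genuine gap in the case split. In the \emph{feasible} case your ``$\phi$ unbounded below along $t\x_0$'' argument does give ball-min $=$ sphere-min for $\x$, so $\widetilde\xi_n=\xi_n$; but you have not relaxed $\lambda$, and you cannot: at the optimal $\x^*$ every component of $\kappa\1-H\x^*$ is nonpositive, so $\max_{\|\lambda\|_2=1,\lambda\geq0}=\max_i(\kappa-(H\x^*)_i)<0$ while $\max_{\|\lambda\|_2\leq1,\lambda\geq0}=0$. Thus the $\lambda$-set stays nonconvex and Sion does not apply. In the \emph{infeasible} case you correctly relax $\lambda$ (since $(\kappa\1-H\x)_+\neq0$ gives $\phi(\x)=\|(\kappa\1-H\x)_+\|_2$), but you never argue ball-min $=$ sphere-min for $\x$ there; your $\widetilde\xi_n$ is by construction $\leq\xi_n$, and an upper bound on $\widetilde\xi_n$ does not transfer upward. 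So in neither case do you actually reach a convex--concave saddle whose value equals $\xi_n$.

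The paper's own upper-bound mechanism (carried out explicitly for the box-constrained case in Section~\ref{sec:uncorgardub}) avoids this by tracking only the \emph{sign}: it introduces the doubly relaxed quantity with both $\|\x\|_2\leq1$ and $\|\lambda\|_2\leq1$, notes that for $\kappa>0$ this relaxed quantity and the original have the same sign (both are $\leq0$ iff $H\x\geq\kappa$ is feasible with $\|\x\|_2\leq1$, which by scaling is the same as feasibility with $\|\x\|_2=1$), and then lower-bounds its negative via Lemma~\ref{lemma:negproblemmaboxperub}. That delivers the storage-capacity threshold exactly, but only the feasibility statement---not the quantitative $\xi_n\leq\xi_n^{(u)}$ you are after. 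To get the full two-sided concentration you would need an additional ingredient, e.g.\ your $1$-Lipschitz observation combined with an upper bound on $E\xi_n$ obtained by other means, or running the sign argument across a continuum of shifted thresholds.
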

\begin{proof}
Presented in \cite{StojnicGardGen13}.
\end{proof}
In a more informal language (essentially ignoring all technicalities and $\epsilon$'s) one has that as long as
\begin{equation}
\alpha>\frac{1}{f_{gar}(\kappa)},\label{eq:condalphauncorlb}
\end{equation}
the problem in (\ref{eq:defprobucor1}) will be infeasible with overwhelming probability. On the other hand, one has that when $\kappa\geq 0$ as long as
\begin{equation}
\alpha<\frac{1}{f_{gar}(\kappa)},\label{eq:condalphauncorubpos}
\end{equation}
the problem in (\ref{eq:defprobucor1}) will be feasible with overwhelming probability. This of course settles the case $\kappa \geq 0$ completely and essentially establishes the storage capacity as $\alpha_c$ which of course matches the prediction given in the introductory analysis presented in \cite{Gar88} and of course rigorously confirmed by the results of \cite{SchTir02,SchTir03,TalBook}. On the other hand, when $\kappa < 0$ it only shows that the storage capacity with overwhelming probability is not higher than the quantity given in \cite{Gar88}. As mentioned above this confirms Talagrand's conjecture 8.4.4 from \cite{TalBook}. However, it does not settle problem (question) 8.4.2 from \cite{TalBook}.

The results obtained based on the above theorem as well as those obtained based on Theorem \ref{thm:SchTirTalSto} are presented in Figure \ref{fig:alfackappa}. When $\kappa\geq 0$ (i.e. when $\alpha\leq 2$) the curve indicates the exact breaking point between the ``overwhelming" feasibility and infeasibility of (\ref{eq:defprobucor1}). On the other hand, when $\kappa< 0$ (i.e. when $\alpha> 2$) the curve is only an upper bound on the storage capacity, i.e. for any value of the pair $(\alpha,\kappa)$ that is above the curve given in Figure \ref{fig:alfackappa}, (\ref{eq:defprobucor1}) is infeasible with overwhelming probability.

Since the case $\kappa<0$ did not appear as settled based on the above presented results we then in \cite{StojnicGardSphNeg13} attempted to lower the upper bounds given in Theorem \ref{thm:Stoc30}. We created a fairly powerful mechanism that produced the following theorem as a way of characterizing the storage capacity of the negative spherical perceptron.

\begin{theorem}
Let $H$ be an $m\times n$ matrix with i.i.d. standard normal components. Let $n$ be large and let $m=\alpha n$, where $\alpha>0$ is a constant independent of $n$. Let $\kappa<0$ be a scalar constant independent of $n$. Set
\begin{equation}
\widehat{\gamma^{(s)}}=\frac{2c_3^{(s)}+\sqrt{4(c_3^{(s)})^2+16}}{8},\label{eq:gamaliftthm}
\end{equation}
and
\begin{equation}
I_{sph}(c_3^{(s)}) = \widehat{\gamma^{(s)}}-\frac{1}{2c_3^{(s)}}\log(1-\frac{c_3^{(s)}}{2\widehat{\gamma^{(s)}}}).\label{eq:Isphthm}
\end{equation}
Set
\begin{equation}
p  =  1+\frac{c_3^{(s)}}{2\gamma_{per}^{(s)}},
q  =  \frac{c_3^{(s)}\kappa}{2\gamma_{per}^{(s)}},
r  =  \frac{c_3^{(s)}\kappa^2}{4\gamma_{per}^{(s)}},
s  =  -\kappa\sqrt{p}+\frac{q}{\sqrt{p}},
C  =  \frac{exp(\frac{q^2}{2p}-r)}{\sqrt{p}},\label{eq:helpdef}
\end{equation}
and
\begin{equation}
I_{per}^{(1)}(c_3^{(s)},\gamma_{per}^{(s)},\kappa)=\frac{1}{2}erfc(\frac{\kappa}{\sqrt{2}})+\frac{C}{2}(erfc(\frac{s}{\sqrt{2}})).\label{eq:Iper1}
\end{equation}
Further, set
\begin{equation}
I_{per}(c_3^{(s)},\alpha,\kappa)=\max_{\gamma_{per}^{(s)}\geq 0}(\gamma_{per}^{(s)}+\frac{1}{c_3^{(s)}}\log(I_{per}^{(1)}(c_3^{(s)},\gamma_{per}^{(s)},\kappa))).\label{eq:Iperthm}
\end{equation}
If $\alpha$ is such that
\begin{equation}
\min_{c_3^{(s)}\geq 0}(-\frac{c_3^{(s)}}{2}+I_{sph}(c_3^{(s)})+I_{per}(c_3^{(s)},\alpha,\kappa))<0,\label{eq:condliftsphnegthm}
\end{equation}
then (\ref{eq:defprobucor1}) is infeasible with overwhelming probability.
\label{thm:liftnegsphper}
\end{theorem}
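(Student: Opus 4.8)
The claim is one-directional — it only asserts infeasibility of (\ref{eq:defprobucor1}) with overwhelming probability under (\ref{eq:condliftsphnegthm}) — so no matching feasibility construction is needed, and the natural route is an exponential-moment (Chernoff) estimate combined with a Gordon-type Gaussian comparison, i.e.\ the same ``lifting'' scheme used for the companion negative-spherical result in \cite{StojnicGardSphNeg13}. As already recorded after (\ref{eq:uncorminmax}), feasibility of (\ref{eq:defprobucor1}) is equivalent to $\xi_n\le 0$ for the min-max $\xi_n$ in (\ref{eq:uncorminmax}), so it suffices to bound $\Prob(\xi_n\le 0)$. Since $\xi_n$ grows like $\sqrt n$, the useful Markov bound is taken at that scale: for every $c_3^{(s)}>0$,
\[
\Prob(\xi_n\le 0)\ \le\ \Exp\, e^{-c_3^{(s)}\sqrt{n}\,\xi_n},
\]
and the goal becomes to show that, under (\ref{eq:condliftsphnegthm}), some choice of $c_3^{(s)}$ makes the right-hand side decay exponentially in $n$.

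To estimate $\Exp e^{-c_3^{(s)}\sqrt n\,\xi_n}$ I would first rewrite the inner maximization in (\ref{eq:uncorminmax}) as $\max_{\|\lambda\|_2=1,\lambda\ge 0}(\kappa\lambda^T\1-\lambda^T H\x)=\|(\kappa\1-H\x)_+\|_2$, which exposes the bilinear Gaussian form $\lambda^T H\x$. The key step is the Gordon comparison (\cite{Gordon85,Gordon88}, in the form used in \cite{StojnicGardGen13}, applied to the decreasing functional $u\mapsto e^{-c_3^{(s)}\sqrt n\,u}$ of $\min_\x\max_\lambda$ via Gordon's inequality for expectations of monotone functionals): it replaces $\lambda^T H\x$ by the decoupled process $\g^T\lambda\,\|\x\|_2+\h^T\x\,\|\lambda\|_2$, with $\g$ standard Gaussian in $m$ dimensions and $\h$ standard Gaussian in $n$ dimensions, independent. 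After this substitution, on $\|\x\|_2=\|\lambda\|_2=1$ the minimization over $\x$ and the maximization over $\lambda$ separate ($\min_\x(-\h^T\x)=-\|\h\|_2$, $\max_\lambda\lambda^T(\kappa\1-\g)=\|(\kappa\1-\g)_+\|_2$), so by independence of $\g,\h$ one gets
\[
\Exp\, e^{-c_3^{(s)}\sqrt n\,\xi_n}\ \le\ \Exp\, e^{\,c_3^{(s)}\sqrt n\,\|\h\|_2}\cdot \Exp\, e^{-c_3^{(s)}\sqrt n\,\|(\kappa\1-\g)_+\|_2}.
\]

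It then remains to evaluate the two scalar exponential moments. For each I would linearize the Euclidean norm through an identity of the type $\|\v\|_2=\min_{\gamma>0}\!\big(\gamma+\frac{\|\v\|_2^2}{4\gamma}\big)$, taking the auxiliary scalar of order $\sqrt n$ and pulling the resulting one-dimensional extremization outside the expectation at the negligible cost of a polynomial-in-$n$ net over that scalar; each factor then becomes a product of $n$ (resp.\ $m=\alpha n$) i.i.d.\ one-dimensional Gaussian exponential moments. The $\h$-factor yields a Gaussian squared-exponential moment whose logarithm, minimized over the auxiliary scalar, gives the closed-form optimizer $\widehat{\gamma^{(s)}}$ of (\ref{eq:gamaliftthm}) (the quadratic there being the stationarity condition) and the quantity $I_{sph}(c_3^{(s)})$ of (\ref{eq:Isphthm}), together with the $-\frac{c_3^{(s)}}{2}$ contribution carried by the spherical normalization $\|\x\|_2=1$. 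The $\g$-factor splits coordinatewise into the ``no help needed'' case $g_i\ge\kappa$, contributing $\frac12\mbox{erfc}(\frac{\kappa}{\sqrt2})$, and the ``violated'' case $g_i<\kappa$, whose exponentially tilted Gaussian integral — completing the square, which is exactly where the constants $p,q,r,s,C$ of (\ref{eq:helpdef}) come from — contributes the second term of $I_{per}^{(1)}$ in (\ref{eq:Iper1}); optimizing over the remaining scalar $\gamma_{per}^{(s)}$ produces $I_{per}(c_3^{(s)},\alpha,\kappa)$ of (\ref{eq:Iperthm}). Collecting terms, $\frac1n\log\Exp e^{-c_3^{(s)}\sqrt n\,\xi_n}$ is asymptotically at most $c_3^{(s)}$ times the left-hand side of (\ref{eq:condliftsphnegthm}); hence, choosing $c_3^{(s)}$ (near-)optimal in (\ref{eq:condliftsphnegthm}) makes the rate strictly negative, so $\Exp e^{-c_3^{(s)}\sqrt n\,\xi_n}$ and therefore $\Prob(\xi_n\le 0)$ decay exponentially, i.e.\ (\ref{eq:defprobucor1}) is infeasible with overwhelming probability.

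The main obstacle, and the part needing the most care, is making the Gordon comparison rigorous in this \emph{exponentiated} form over the \emph{continuum} of $\x$ (and $\lambda$): a plain $\epsilon$-net/union bound over the sphere is fatal here, since it costs an $\Omega(n)$ term in the exponent, so the argument must be routed through Gordon's inequality for expectations of products of monotone functionals of $\min_\x\max_\lambda$ of Gaussian processes, while the half-space restriction $\lambda\ge 0$ and the non-compactness of the auxiliary parameters (both the $\gamma$'s and $c_3^{(s)}$) are handled by suitable truncation together with a careful exchange of the $n\to\infty$ limit with the extremizations. Verifying the domains on which the squared-exponential Gaussian moments are finite — which is precisely what pins $\widehat{\gamma^{(s)}}$ down as the relevant root of (\ref{eq:gamaliftthm}) and constrains the admissible $\gamma_{per}^{(s)}$ — is the one genuinely computational point to be checked along the way.
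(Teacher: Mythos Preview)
Your proposal is correct and follows essentially the same route the paper uses (the proof is deferred here to \cite{StojnicGardSphNeg13}, but the identical machinery is spelled out for the $\pm 1$ analogue in Section~\ref{sec:discperlow}): Gordon's comparison for monotone functionals (Theorem~\ref{thm:Gordonneg1}, Lemma~\ref{lemma:negexplemmalow}) applied to $e^{-c_3(\cdot)}$, followed by the $\h$/$\g$ decoupling and the $\|\v\|_2=\min_{\gamma>0}\bigl(\gamma+\|\v\|_2^2/(4\gamma)\bigr)$ linearization that produces $I_{sph}$ and $I_{per}$. The only packaging difference is that the paper feeds the exponential-moment bound through Jensen to lower-bound $E\xi_n$ (cf.\ (\ref{eq:chneg8})--(\ref{eq:chneg9})) and then invokes concentration, whereas you apply Markov directly to $P(\xi_n\le 0)$; this is cosmetically cleaner but equivalent. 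One small correction: the $-c_3^{(s)}/2$ term does not arise from the spherical normalization $\|\x\|_2=1$ but from the auxiliary scalar Gaussian $g$ that must be inserted on the $H$-side in the comparison lemma to match second moments --- it enters as $Ee^{-c_3 g}=e^{c_3^2/2}$, which after taking logs and dividing by $nc_3^{(s)}$ yields exactly the $-c_3^{(s)}/2$ in (\ref{eq:condliftsphnegthm}).
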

\begin{proof}
Presented in \cite{StojnicGardSphNeg13}.
\end{proof}

The results one can obtain for the storage capacity based on the above theorem are presented in Figure \ref{fig:liftsphneg} (as mentioned in \cite{StojnicGardSphNeg13}, due to numerical optimizations involved the results presented in Figure \ref{fig:liftsphneg} should be taken only as an illustration; also as discussed in \cite{StojnicGardSphNeg13} taking $c_3^{(s)}\rightarrow 0$ in Theorem \ref{thm:liftnegsphper} produces the results of Theorem \ref{thm:Stoc30}). Even as such, they indicate that a visible improvement in the values of the storage capacity may be possible, though in a range of values of $\alpha$ substantially larger than $2$ (i.e. in a range of $\kappa$'s somewhat smaller than zero). While at this point this observation may look as unrelated to the problem that we will consider in the following section one should keep it in mind (essentially, a conceptually similar conclusion will be made later on when we study the capacities with limited errors).
\begin{figure}[htb]
\centering
\centerline{\epsfig{figure=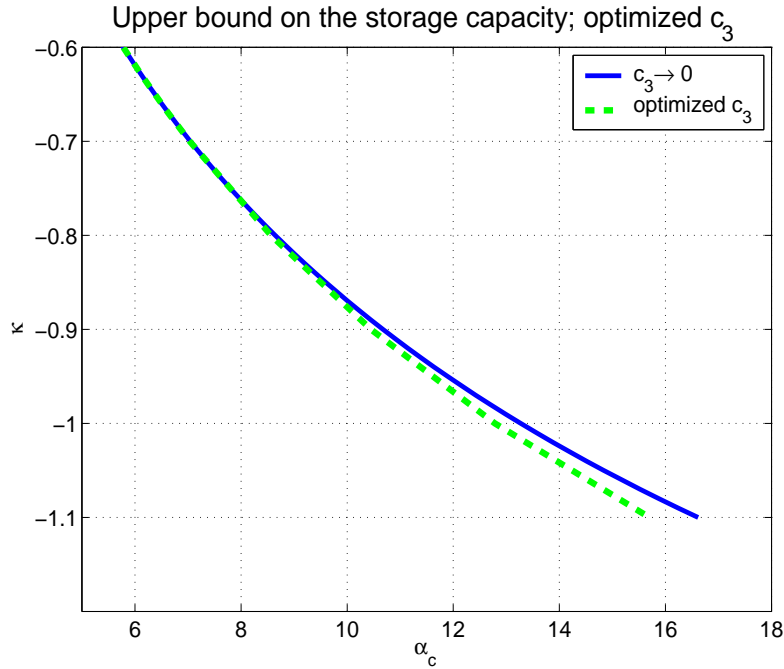,width=10.5cm,height=9cm}}
\caption{$\kappa$ as a function of $\alpha$}
\label{fig:liftsphneg}
\end{figure}

\subsection{Discrete perceptrons}
\label{sec:discper}

Below we present the results/predictions known for the discrete perceptrons. We will mostly focus on the $\pm 1$ perceptron as that one has been studied the most extensively throughout the literature. The known results related to the other two cases that we will study here, namely $0/1$ and box-constrained perceptrons, we find it easier to discuss in parallel as we present our own (these results are a bit involved and we believe that it would be easier to discuss them once we have a few other technical details setup).

Before presenting the concrete known results in this direction we will recall on the problem given in (\ref{eq:defprobucor}) and (\ref{eq:defprobucor1}) and how it changes as one moves from the spherical to $\pm 1$ constraint. Following what was done in Section \ref{sec:rigresnegsphper} one can ask how large $\alpha$ can be so that the following optimization problem is feasible with overwhelming probability
\begin{eqnarray}
& & H\x\geq \kappa\nonumber \\
& & \x_i^2=1,1\leq i\leq n.\label{eq:defprobucor1disc}
\end{eqnarray}
We do, of course, recall that the dimension of $H$ is again $m\times n$ and that $m=\alpha n$ where $\alpha$ is a constant independent of $n$.

As was the case in the previous subsection, we should again preface this brief presentation of the known results by mentioning that a way more is known than what we will present below. However, we will restrict ourselves to the facts that we deem are of most use for the presentation that will follow in later sections.

\subsubsection{Statistical mechanics}
\label{sec:statmech}

As far as a statistical mechanics approach to $\pm 1$ perceptron goes their analytical characterization to a degree have already been started with \cite{Gar88}. Although the main (or the more successful) concern of \cite{GarDer88} was the spherical perceptron, it was also observed that $\pm 1$ perceptron can be handled through the replica mechanisms introduced therein. In a nutshell, what was shown in \cite{GarDer88} (and later also observed in \cite{GutSte90}) related to $\pm 1$ perceptron was the following: assuming that $H_{ij},1\leq i\leq m,1\leq j\leq n$, are i.i.d. symmetric Bernoulli random variables then if $\alpha$ is such that
\begin{equation}
\alpha_{c}(\kappa) =\frac{2}{\pi}(\frac{1}{\sqrt{2\pi}}\int_{-\kappa}^{\infty}(z+\kappa)^2e^{-\frac{z^2}{2}}dz)^{-1},\label{eq:garstorcapdisc}
\end{equation}
then (\ref{eq:defdynfpstr}) holds with overwhelming probability with the restriction on $X_{ji}$ being $X_{ji}\in\{-1,1\}$. Stated in other words (possibly in a more convenient way), if $\alpha$ is such that (\ref{eq:garstorcapdisc}) holds then (\ref{eq:defprobucor1disc}) is feasible with overwhelming probability.

Based on the above characterization one then has that $\alpha_c$ achieves its maximum over positive $\kappa$'s as $\kappa\rightarrow 0$. One in fact easily then has
\begin{equation}
\lim_{\kappa\rightarrow 0}\alpha_c(\kappa)=\frac{4}{\pi}.\label{eq:garstorcapk0disc}
\end{equation}
Of course, it was immediately pointed out already in \cite{GarDer88} that the above $\frac{4}{\pi}$ prediction is essentially not sustainable. In fact not only was it pointed out because of potential instability of the replica approach used in \cite{Gar88}, it was actually rigorously argued through simple combinatorial arguments that $\lim_{\kappa\rightarrow 0}\alpha_c(\kappa)\leq 1$. Many other problems remained open. For example, while it was obvious already based on the considerations presented in \cite{GarDer88} that the storage capacity prediction of $\frac{4}{\pi}$ for the $\kappa=0$ case is an upper bound, it was not clear if one can make such a safe prediction for the entire range of the parameter $\kappa$.

Of course the above considerations then left the replica treatment presented in \cite{Gar88} a bit powerless when it comes to the $\pm 1$ scenario (at the very least in a special case of the so-called zero-thresholds, i.e. when $\kappa=0$). However, many other great works in this direction followed attempting to resolve the problem. A couple of them relied on the statistical mechanics approach mentioned above as well. Of course, as one may expect (and as already had been hinted in \cite{GarDer88}), the first next natural extension of the approach presented in \cite{Gar88,GarDer88} would have been to start breaking replica symmetry. A study in this direction was presented in \cite{KraMez89}. However, as such studies typically may run into substantial numerical problems, the authors in \cite{KraMez89} resorted to a clever way of predicting the critical value for the storage capacity by taking the value where the entropy becomes zero. For $\kappa=0$, that gave an estimate of $\approx 0.83$, substantially lower than $1$, what the above mentioned simple combinatorial bound gives. Similar argument was repeated in \cite{GutSte90} for $\pm 1$ perceptron and extended to $0/1$ perceptron and a few other discrete perceptrons studied therein.

\subsubsection{Rigorous results -- $\pm 1$ perceptron}
\label{sec:rigresdisc}

As far as the rigorous results go we should mention that not much seems to be known. While that does not necessarily mean that the problem is hard, it may imply that it is not super easy either. Among the very first rigorous results are probably those from \cite{KimRoc98}. Roughly speaking, in \cite{KimRoc98}, the authors showed that if $0.005\leq \alpha\leq 0.9937$ then (\ref{eq:defprobucor1disc}) is feasible with overwhelming probability. While these bounds can be improved, improving them to reach anywhere close to $\approx 0.83$ prediction of \cite{KraMez89,GutSte90} does not seem super easy.

We should also mention a seemingly unrelated line of work of Talagrand. Namely, Talagrand studied a variant of the above problem through a more general partition function type of approach, see e.g. \cite{TalBook}. While he was able to show that replica symmetry type of approach would produce rigorous results for such a consideration, he was able to do so in the so-called high-temperature regime. However the problem that he considers boils down to the one of interest here exactly in the opposite, low-temperature regime.

\subsubsection{Simple combinatorial bound -- $\pm 1$ perceptron}
\label{sec:rigresdiscsimpcomb}

Since we have mentioned it in on a couple of occasions in the above discussion we find it useful to also present the simple approach one can use to upper bound the storage capacity of many discrete perceptrons (and certainly of the $\pm 1$ that we consider here). While these bounds may not have been explicitly presented in \cite{GarDer88} the approach that we present below follows the same strategy and we frame it as known result. Namely, one starts by looking at how likely is that each of the inequalities in (\ref{eq:defprobucor1disc}) is satisfied. A simple consideration then gives
\begin{equation}
P(H_{i,:}\x\geq \kappa|\x)=P(g\geq \kappa)=\frac{1}{2}\mbox{erfc}(\frac{\kappa}{\sqrt{2}}),1\leq i\leq m.\label{eq:simpcomb1}
\end{equation}
After accounting for all the inequalities in (\ref{eq:defprobucor1disc}) (essentially all the rows of $H$) one then further has
\begin{equation}
P(H\x\geq \kappa|\x)=(P(H_{i,:}\x\geq \kappa|\x))^m.\label{eq:simpcomb2}
\end{equation}
Using the union bound over all $\x$ then gives
\begin{equation}
P(\exists \x|H\x\geq \kappa)\leq 2^nP(H\x\geq \kappa|\x)=2^n(P(H_{i,:}\x\geq \kappa|\x))^m.\label{eq:simpcomb3}
\end{equation}
A combination of (\ref{eq:simpcomb1}) and (\ref{eq:simpcomb3}) then gives
\begin{equation}
P(\exists \x|H\x\geq \kappa)\leq 2^n\left (\frac{1}{2}\mbox{erfc}(\frac{\kappa}{\sqrt{2}})\right )^m.\label{eq:simpcomb4}
\end{equation}
From (\ref{eq:simpcomb4}) one then has that if $\alpha=\frac{m}{n}$ is such that
\begin{equation}
\alpha> -\frac{\log(2)}{\log\left (\frac{1}{2}\mbox{erfc}(\frac{\kappa}{\sqrt{2}})\right )},\label{eq:simpcomb5}
\end{equation}
then
\begin{equation}
\lim_{n\rightarrow\infty}P(\exists \x|H\x\geq \kappa)\leq \lim_{n\rightarrow\infty}
2^n\left (\frac{1}{2}\mbox{erfc}(\frac{\kappa}{\sqrt{2}})\right )^m=0.\label{eq:simpcomb6}
\end{equation}
The upper bounds one can obtain on the storage capacity based on the above consideration (in particular based on (\ref{eq:simpcomb5})) are presented in Figure \ref{fig:discsimpcomb}. Of course, these bounds can be improved (as mentioned earlier, one of possible such improvements was already presented in \cite{KimRoc98}). However, here our goal is more to recall on the results that relate to the ones that we will present in this paper rather than on the best possible ones.
\begin{figure}[htb]
\centering
\centerline{\epsfig{figure=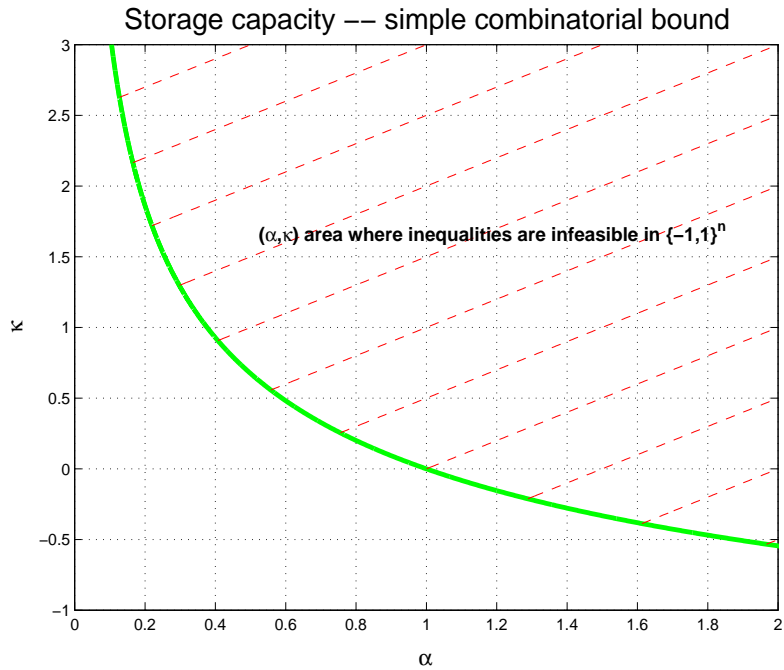,width=10.5cm,height=9cm}}
\caption{$\kappa$ as a function of $\alpha$; simple combinatorial bound; $\x\in\left \{-\frac{1}{\sqrt{n}},\frac{1}{\sqrt{n}}\right \}^n$}
\label{fig:discsimpcomb}
\end{figure}

\section{$\pm 1$ perceptrons}
\label{sec:discper}

In this section we will present a collection of mathematically rigorous results related to $\pm 1$ perceptrons. We will rely on many simplifications of the original perceptron setup from Section \ref{sec:mathsetupper} introduced in \cite{StojnicGardGen13,StojnicGardSphNeg13,StojnicGardSphErr13} and presented in Section \ref{sec:discper}. To that end we start by recalling that for all practical purposes needed here (and those we needed in \cite{StojnicGardGen13,StojnicGardSphNeg13,StojnicGardSphErr13}) the storage capacity of $\pm 1$ perceptron can be considered through the feasibility problem given in (\ref{eq:defprobucor1disc}) which we restate below
\begin{eqnarray}
& & H\x\geq \kappa\nonumber \\
& & \x_i^2=\frac{1}{n},1\leq i\leq n.\label{eq:defprobucor2pm1}
\end{eqnarray}
We recall as well, that as argued in \cite{StojnicGardGen13,StojnicGardSphNeg13,StojnicGardSphErr13} (and as mentioned in the previous section) one can assume that the elements of $H$ are i.i.d. standard normals and that the dimension of $H$ is $m\times n$, where as earlier we keep the linear regime, i.e. continue to assume that $m=\alpha n$ where $\alpha$ is a constant independent of $n$. Now, if all inequalities in (\ref{eq:defprobucor2pm1}) are satisfied one can have that the dynamics established will be stable and all $m$ patterns could be successfully stored. Following the strategy presented in \cite{StojnicGardGen13,StojnicGardSphNeg13,StojnicGardSphErr13} (and briefly recalled on in Section \ref{sec:rigresnegsphper} one can then reformulate (\ref{eq:defprobucor2pm1}) so that the feasibility problem of interest becomes
\begin{eqnarray}
\xi_{\pm 1}=\min_{\x} \max_{\lambda\geq 0} & &  \kappa\lambda^T\1- \lambda^TH\x \nonumber \\
\mbox{subject to} & & \|\lambda\|_2= 1\nonumber \\
& & \x_i^2=\frac{1}{n},1\leq i\leq n.\label{eq:feaspm1}
\end{eqnarray}
Clearly, following the logic we presented in Section \ref{sec:rigresnegsphper}, the sign of $\xi_{\pm 1}$ determines the feasibility of (\ref{eq:defprobucor2pm1}). In particular, if $\xi_{\pm 1}>0$ then (\ref{eq:defprobucor2pm1}) is infeasible. Given the random structure of the problem (we recall that $H$ is random) one can then pose the following probabilistic feasibility question: how small can $m$ be so that $\xi_{\pm 1}$ in (\ref{eq:feaspm1}) is positive and (\ref{eq:defprobucor2pm1}) is infeasible with overwhelming probability? In what follows we will attempt to provide an answer to such a question.

\subsection{Probabilistic analysis}
\label{sec:probanalrigpm1}

In this section we will present a probabilistic analysis of the above optimization problem given in (\ref{eq:feaspm1}). In a nutshell, we will provide a relation between $\kappa$ and $\alpha=\frac{m}{n}$ so that with overwhelming probability over $H$ $\xi_{\pm 1}>0$. This will, of course, based on the above discussion then be enough to conclude that the problem in (\ref{eq:defprobucor2pm1}) is infeasible with overwhelming probability when $\kappa$ and $\alpha=\frac{m}{n}$ satisfy such a relation.

The analysis that we will present below will to a degree rely on a strategy we developed in \cite{StojnicRegRndDlt10,StojnicGorEx10} and utilized in \cite{StojnicGardGen13} when studying the storage capacity of the standard spherical perceptrons. We start by recalling on a set of probabilistic results from \cite{Gordon85,Gordon88} that were used as an integral part of the strategy developed in \cite{StojnicRegRndDlt10,StojnicGorEx10,StojnicGardGen13}.
\begin{theorem}(\cite{Gordon88,Gordon85})
\label{thm:Gordonmesh1} Let $X_{ij}$ and $Y_{ij}$, $1\leq i\leq n,1\leq j\leq m$, be two centered Gaussian processes which satisfy the following inequalities for all choices of indices
\begin{enumerate}
\item $E(X_{ij}^2)=E(Y_{ij}^2)$
\item $E(X_{ij}X_{ik})\geq E(Y_{ij}Y_{ik})$
\item $E(X_{ij}X_{lk})\leq E(Y_{ij}Y_{lk}), i\neq l$.
\end{enumerate}
Then
\begin{equation*}
P(\bigcap_{i}\bigcup_{j}(X_{ij}\geq \lambda_{ij}))\leq P(\bigcap_{i}\bigcup_{j}(Y_{ij}\geq \lambda_{ij})).
\end{equation*}
\end{theorem}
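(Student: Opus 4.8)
The plan is to prove this by the classical Gaussian interpolation argument (Slepian--Gordon type), choosing the test function so that the sum/product structure of the event $\bigcap_i\bigcup_j(\cdot\ge\lambda_{ij})$ is matched, term by term, to the three covariance hypotheses. First I would replace the probabilities by expectations of a fixed smooth function. For each pair $(i,j)$ fix a smooth nondecreasing $\phi_{ij}:\mathbb{R}\to[0,1]$ with $\phi_{ij}'$ compactly supported that approximates $u\mapsto\mathbf{1}\{u\ge\lambda_{ij}\}$ pointwise, and put
\[
f(x)=\prod_{i=1}^{n}g_i(x_{i\cdot}),\qquad g_i(x_{i\cdot})=1-\prod_{j=1}^{m}\bigl(1-\phi_{ij}(x_{ij})\bigr).
\]
Then $0\le f\le 1$ and $f\to\mathbf{1}\{\,\forall i\ \exists j:\ x_{ij}\ge\lambda_{ij}\,\}$ pointwise away from the null set $\bigcup_{i,j}\{x_{ij}=\lambda_{ij}\}$, so by dominated convergence it suffices to prove $Ef(X)\le Ef(Y)$ for this fixed $f$. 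To avoid degeneracy I would first add independent $\mathcal{N}(0,\delta^2)$ noise to every $X_{ij}$ and every $Y_{ij}$ — this leaves all three hypotheses intact and makes both processes nondegenerate — prove the inequality, and send $\delta\downarrow0$ at the end.

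Next, realize $X$ and $Y$ on a common space as independent copies, set $Z_{ij}(t)=\sqrt t\,X_{ij}+\sqrt{1-t}\,Y_{ij}$, and let $\psi(t)=Ef(Z(t))$; the goal is $\psi(1)\le\psi(0)$, which follows from $\psi'(t)\le 0$ on $(0,1)$. Differentiating under the expectation and applying Gaussian integration by parts separately in the $X$- and $Y$-coordinates (legitimate since $f$ and its derivatives are bounded with the derivatives supported in a fixed box in the active directions, and $Z(t)$ is nondegenerate for $t\in(0,1)$), the factors $1/\sqrt t$ and $1/\sqrt{1-t}$ cancel and one gets
\[
\psi'(t)=\frac12\sum_{(i,j)}\sum_{(k,l)}\bigl(E[X_{ij}X_{kl}]-E[Y_{ij}Y_{kl}]\bigr)\,E\bigl[\partial_{ij}\partial_{kl}f(Z(t))\bigr].
\]

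The step I expect to carry the weight of the proof is the sign analysis of the mixed second derivatives of $f$, since it is exactly this that dictates the form of the hypotheses. Using $f=\prod_i g_i$ and $\partial_{ij}g_i=\phi_{ij}'\prod_{j'\ne j}(1-\phi_{ij'})\ge0$: (i) for $(i,j)=(k,l)$ the covariance difference is $E[X_{ij}^2]-E[Y_{ij}^2]=0$ by hypothesis~1, so such terms vanish; (ii) for $i=k$, $j\ne l$, one has $\partial_{ij}\partial_{il}f=\bigl(-\phi_{ij}'\phi_{il}'\prod_{j'\ne j,l}(1-\phi_{ij'})\bigr)\prod_{i'\ne i}g_{i'}\le 0$, the correct sign against $E[X_{ij}X_{il}]-E[Y_{ij}Y_{il}]\ge0$ (hypothesis~2); (iii) for $i\ne k$, $\partial_{ij}\partial_{kl}f=(\partial_{ij}g_i)(\partial_{kl}g_k)\prod_{i'\ne i,k}g_{i'}\ge 0$, the correct sign against $E[X_{ij}X_{kl}]-E[Y_{ij}Y_{kl}]\le0$ (hypothesis~3). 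Hence every summand in $\psi'(t)$ is $\le 0$, so $\psi'(t)\le 0$, giving $Ef(X)\le Ef(Y)$; letting $\delta\downarrow0$ and sharpening the $\phi_{ij}$ then yields $P(\bigcap_i\bigcup_j(X_{ij}\ge\lambda_{ij}))\le P(\bigcap_i\bigcup_j(Y_{ij}\ge\lambda_{ij}))$. The remaining points are routine: justifying differentiation under the integral sign and the Gaussian integration-by-parts identity $E[Z_a h(Z)]=\sum_b E[Z_aZ_b]\,E[\partial_b h(Z)]$ (handled by boundedness and decay of $f$ together with the $\delta$-nondegeneracy), and the continuity of $\psi$ at $t=0,1$ and of the perturbed probabilities as $\delta\downarrow0$ (dominated convergence).
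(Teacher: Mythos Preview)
Your argument is correct and is exactly the classical Slepian--Gordon interpolation proof. Note, however, that the paper does not actually prove this theorem at all: it is stated with a citation to \cite{Gordon88,Gordon85} and used as a black box, so there is no ``paper's own proof'' to compare against beyond the original references. What you have written is essentially Gordon's original argument, and all the sign checks on the mixed partials of $f=\prod_i\bigl(1-\prod_j(1-\phi_{ij})\bigr)$ line up correctly with the three covariance hypotheses.
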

The following, more simpler, version of the above theorem relates to the expected values.
\begin{theorem}(\cite{Gordon85,Gordon88})
\label{thm:Gordonmesh2} Let $X_{ij}$ and $Y_{ij}$, $1\leq i\leq n,1\leq j\leq m$, be two centered Gaussian processes which satisfy the following inequalities for all choices of indices
\begin{enumerate}
\item $E(X_{ij}^2)=E(Y_{ij}^2)$
\item $E(X_{ij}X_{ik})\geq E(Y_{ij}Y_{ik})$
\item $E(X_{ij}X_{lk})\leq E(Y_{ij}Y_{lk}), i\neq l$.
\end{enumerate}
Then
\begin{equation*}
E(\min_{i}\max_{j}(X_{ij}))\leq E(\min_i\max_j(Y_{ij})).
\end{equation*}
\end{theorem}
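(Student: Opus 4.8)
The plan is to obtain Theorem \ref{thm:Gordonmesh2} as a corollary of the already-stated Theorem \ref{thm:Gordonmesh1} by a layer-cake (tail-integration) argument; no new Gaussian comparison work is needed. The starting point is the elementary set identity, valid for every real $s$,
$$\Big\{\min_i\max_j X_{ij}\geq s\Big\}=\bigcap_i\bigcup_j\{X_{ij}\geq s\},$$
and similarly for $Y$. Taking the common threshold choice $\lambda_{ij}=s$ in Theorem \ref{thm:Gordonmesh1} (whose hypotheses 1--3 are exactly those assumed in Theorem \ref{thm:Gordonmesh2}) then yields
$$P\Big(\min_i\max_j X_{ij}\geq s\Big)\leq P\Big(\min_i\max_j Y_{ij}\geq s\Big)\qquad\text{for all real }s.$$

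Next I would integrate this pointwise tail bound against $ds$. Write $W_X=\min_i\max_j X_{ij}$ and $W_Y=\min_i\max_j Y_{ij}$; both are integrable, since $|W_X|\leq\sum_{i,j}|X_{ij}|$ with each $X_{ij}$ a centered Gaussian of finite variance (and likewise for $Y$), so the tail representation $E(W)=\int_0^{\infty}P(W\geq s)\,ds-\int_{-\infty}^0 P(W<s)\,ds$ applies to both. On the positive axis the tail inequality integrates directly; on the negative axis one passes to complements, $P(W_X<s)=1-P(W_X\geq s)\geq 1-P(W_Y\geq s)=P(W_Y<s)$, which flips sign once multiplied by the leading minus sign in the tail formula. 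Summing the two contributions gives $E(W_X)\leq E(W_Y)$, which is the assertion.

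I do not expect a genuine obstacle here; the only points needing care are bookkeeping ones. One must run the threshold comparison over the whole real line rather than only over $s\geq 0$, because the negative-axis term of the layer-cake formula needs $P(W_X\geq s)\leq P(W_Y\geq s)$ there too. The open/closed distinction, $\{X_{ij}\geq s\}$ versus $\{X_{ij}>s\}$, is harmless: the two tail functions of $W_X$ agree off a countable set, so the $ds$-integrals are unaffected, and if one wishes to be scrupulous one may perturb each marginal by an independent vanishing Gaussian to make all laws atomless and pass to the limit. Integrability requires no truncation, being automatic from the Gaussian marginals. (If one instead wanted a self-contained proof, the alternative is the Slepian--Gordon smoothing argument: with $X$ and $Y$ taken independent, replace $\min_i\max_j$ by a log-sum-exp surrogate $-\beta^{-1}\log\sum_i\big(\sum_j e^{\beta Z_{ij}}\big)^{-1}$, interpolate via $Z_{ij}(t)=\sqrt{t}\,X_{ij}+\sqrt{1-t}\,Y_{ij}$, differentiate in $t$, apply Gaussian integration by parts, and let $\beta\to\infty$; hypotheses 1--3 are precisely what forces the $t$-derivative to have the right sign. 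Routing through Theorem \ref{thm:Gordonmesh1} is shorter.)
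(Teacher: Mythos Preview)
Your derivation is correct. The paper itself does not supply a proof of Theorem \ref{thm:Gordonmesh2}; it simply cites the result from Gordon's original papers \cite{Gordon85,Gordon88} and describes it as ``a more simpler version'' of Theorem \ref{thm:Gordonmesh1}. Your route---specializing Theorem \ref{thm:Gordonmesh1} to a constant threshold $\lambda_{ij}=s$, using the set identity $\{\min_i\max_j X_{ij}\geq s\}=\bigcap_i\bigcup_j\{X_{ij}\geq s\}$, and then integrating the resulting tail inequality via the layer-cake representation---is exactly the standard way to pass from the probability comparison to the expectation comparison, and all the bookkeeping points you flag (integrability from Gaussian tails, irrelevance of the open/closed distinction under Lebesgue integration, need to run $s$ over the whole real line) are handled correctly. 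The parenthetical alternative via log-sum-exp smoothing and Gaussian integration by parts is closer in spirit to how Gordon originally argued, but since the paper already grants Theorem \ref{thm:Gordonmesh1}, your shorter tail-integration route is the natural one here.
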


Now, since all random quantities of interest below will concentrate around its mean values it will be enough to study only their averages. However, since it will not make writing of what we intend to present in the remaining parts of this section substantially more complicated we will present a complete probabilistic treatment and will leave the studying of the expected values for the presentation that we will give in the following subsection where such a consideration will substantially simplify the exposition.

We will make use of Theorem \ref{thm:Gordonmesh1} through the following lemma (the lemma is an easy consequence of Theorem \ref{thm:Gordonmesh1} and in fact is fairly similar to Lemma 3.1 in \cite{Gordon88}, see also \cite{StojnicHopBnds10,StojnicGardGen13} for similar considerations).
\begin{lemma}
Let $H$ be an $m\times n$ matrix with i.i.d. standard normal components. Let $\g$ and $\h$ be $m\times 1$ and $n\times 1$ vectors, respectively, with i.i.d. standard normal components. Also, let $g$ be a standard normal random variable and let $\zeta_{\lambda}$ be a function of $\x$. Then
\begin{equation}
P(\min_{\x_i^2=\frac{1}{n}}\max_{\|\lambda\|_2=1,\lambda_i\geq 0}(-\lambda^TH\x+g-\zeta_{\lambda})\geq 0)\\\geq
P(\min_{\x_i^2=\frac{1}{n}}\max_{\|\lambda\|_2=1,\lambda_i\geq 0}(\g^T\lambda+\h^T\x-\zeta_{\lambda})\geq 0).\label{eq:negproblemma}
\end{equation}\label{lemma:negproblemma}
\end{lemma}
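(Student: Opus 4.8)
The plan is to read off Lemma \ref{lemma:negproblemma} as a direct instance of Gordon's comparison inequality, Theorem \ref{thm:Gordonmesh1}, with the $H$-based object playing the role of the $Y$-process (the one producing the \emph{larger} probability in that theorem) and the $(\g,\h)$-based object playing the role of the $X$-process. I index both centered Gaussian processes by the pair $(\x,\lambda)$, where $\x$ runs over the $2^n$ sign patterns satisfying $\x_i^2=\frac1n$ — this is the ``$i$''-index carrying the intersection/minimum — and $\lambda$ runs over the spherical cap $\{\lambda:\|\lambda\|_2=1,\lambda_i\geq 0\}$ — the ``$j$''-index carrying the union/maximum. Concretely I would take $Y_{\x,\lambda}=-\lambda^TH\x+g$ and $X_{\x,\lambda}=\g^T\lambda+\h^T\x$, and let the common deterministic quantity $\zeta_\lambda$ play the role of the threshold array $\lambda_{ij}$. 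The one point that requires foresight is the insertion of the extra independent standard normal $g$ into the $H$-process: it is precisely what is needed to equalize the second moments.

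Next I would verify the three hypotheses of Theorem \ref{thm:Gordonmesh1}, using $\|\lambda\|_2^2=1$ and $\|\x\|_2^2=\sum_i\x_i^2=1$ throughout. Condition 1 holds since $E(Y_{\x,\lambda}^2)=\|\lambda\|_2^2\|\x\|_2^2+1=2=\|\lambda\|_2^2+\|\x\|_2^2=E(X_{\x,\lambda}^2)$. Condition 2 (same $\x$, different $\lambda,\lambda'$) holds with equality, since $E(Y_{\x,\lambda}Y_{\x,\lambda'})=(\lambda^T\lambda')\|\x\|_2^2+1=\lambda^T\lambda'+1=\lambda^T\lambda'+\|\x\|_2^2=E(X_{\x,\lambda}X_{\x,\lambda'})$. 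Condition 3 ($\x\neq\x'$) amounts to $E(X_{\x,\lambda}X_{\x',\lambda'})=\lambda^T\lambda'+\x^T\x'\leq(\lambda^T\lambda')(\x^T\x')+1=E(Y_{\x,\lambda}Y_{\x',\lambda'})$, which rearranges to $(1-\lambda^T\lambda')(1-\x^T\x')\geq0$; this is immediate from Cauchy--Schwarz, since $\lambda^T\lambda'\leq1$ and $\x^T\x'\leq1$. Theorem \ref{thm:Gordonmesh1} then gives $P\big(\cap_\x\cup_\lambda(X_{\x,\lambda}\geq\zeta_\lambda)\big)\leq P\big(\cap_\x\cup_\lambda(Y_{\x,\lambda}\geq\zeta_\lambda)\big)$, and rewriting each event in the form $\{\min_\x\max_\lambda(\,\cdot\,-\zeta_\lambda)\geq0\}$ is exactly the claim (\ref{eq:negproblemma}).

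The only genuine technicality — and the step I expect to need the most care in the writeup — is that Theorem \ref{thm:Gordonmesh1} is stated for finite index sets, whereas $\lambda$ here ranges over a continuum; the $\x$-index is already finite, so this is the sole gap. The standard remedy is to replace the cap by a finite $\epsilon$-net, apply Gordon on the resulting finite product index set, and pass to the limit using the almost-sure continuity of both Gaussian processes in $\lambda$ together with compactness of the cap, exactly as done in \cite{StojnicGardGen13,StojnicHopBnds10} and in Lemma 3.1 of \cite{Gordon88}. Apart from that, the only bookkeeping to keep straight is the orientation of the inequality: since Gordon's theorem assigns the \emph{smaller} probability to the $X$-process, the $(\g,\h)$-side must be labeled $X$ and the $H$-side $Y$, which is consistent with the ``$\geq$'' asserted in the lemma.
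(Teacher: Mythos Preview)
Your proposal is correct and takes essentially the same approach as the paper: the paper's proof simply cites Lemma 3.1 of \cite{Gordon88} and Lemma 7 of \cite{StojnicHopBnds10}, noting that only the admissible sets for $\x$ and $\lambda$ differ, and your writeup is precisely the detailed execution of that citation via Theorem \ref{thm:Gordonmesh1}. Your identification of $X$ and $Y$, the covariance checks (including the key factorization $(1-\lambda^T\lambda')(1-\x^T\x')\geq 0$), and the $\epsilon$-net remark for the continuous $\lambda$-index are all exactly what underlies those references.
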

\begin{proof}
The proof is basically similar to the proof of Lemma 3.1 in \cite{Gordon88} as well as to the proof of Lemma 7 in \cite{StojnicHopBnds10}. The only difference is in allowed sets of values for $\x$ and $\lambda$. Such a difference introduces no structural changes in the proof though.
\end{proof}

Let $\zeta_{\lambda}=-\kappa\lambda^T\1+\epsilon_{5}^{(g)}\sqrt{n}+\xi_{\pm 1}^{(l)}$ with $\epsilon_{5}^{(g)}>0$ being an arbitrarily small constant independent of $n$. We will first look at the right-hand side of the inequality in (\ref{eq:negproblemma}). The following is then the probability of interest
\begin{equation}
P(\min_{\x_i^2=1}\max_{\|\lambda\|_2=1,\lambda_i\geq 0}(\g^T\lambda+\h^T\x+\kappa\lambda^T\1-\epsilon_{5}^{(g)}\sqrt{n})\geq \xi_{\pm 1}^{(l)}).\label{eq:negprobanal0}
\end{equation}
After solving the minimization over $\x$ one obtains
\begin{equation}
\hspace{-.3in}P(\min_{\x_i^2=1}\max_{\|\lambda\|_2=1,\lambda_i\geq 0}(\g^T\lambda+\h^T\x+\kappa\lambda^T\1-\epsilon_{5}^{(g)}\sqrt{n})\geq \xi_{\pm 1}^{(l)})=P(\|(\g+\kappa\1)_+\|_2-\sum_{i=1}^{n}|\h_i|-\epsilon_{5}^{(g)}\sqrt{n}\geq \xi_{\pm 1}^{(l)}),\label{eq:negprobanal1}
\end{equation}
where $(\g+\kappa\1)_+$ is $(\g+\kappa\1)$ vector with negative components replaced by zeros. Following line by line what was done in \cite{StojnicGardGen13} after equation $(13)$ one then has
\begin{multline}
P(\min_{\x_i^2=1}\max_{\|\lambda\|_2=1,\lambda_i\geq 0}(\g^T\lambda+\h^T\x+\kappa\lambda^T\1-\epsilon_{5}^{(g)}\sqrt{n})\geq \xi_{\pm 1}^{(l)})\\\geq
(1-e^{-\epsilon_{2}^{(m)} m})(1-e^{-\epsilon_{2}^{(n)} n})
P((1-\epsilon_{1}^{(m)})\sqrt{mf_{gar}(\kappa)}-(1+\epsilon_{1}^{(n)})\sqrt{n}-\epsilon_{5}^{(g)}\sqrt{n}\geq \xi_{\pm 1}^{(l)}).
\label{eq:negprobanal2}
\end{multline}
where
\begin{equation}
f_{gar}(\kappa)=\frac{1}{\sqrt{2\pi}}\int_{-\kappa}^{\infty}(\g_i+\kappa)^2e^{-\frac{\g_i^2}{2}}d\g_i,\label{eq:fgar}
\end{equation}
$\epsilon_{5}^{(g)}$, $\epsilon_1^{(m)}$, and $\epsilon_1^{(n)}$ are arbitrarily small positive constants and $\epsilon_2^{(m)}$ and $\epsilon_2^{(n)}$ are constants possibly dependent on $\epsilon_1^{(m)}$, $f_{gar}(\kappa)$, and $\epsilon_1^{(n)}$, respectively but independent of $n$.
If
\begin{eqnarray}
& & (1-\epsilon_{1}^{(m)})\sqrt{mf_{gar}(\kappa)}-(1+\epsilon_{1}^{(n)})\sqrt{\frac{2}{\pi}}\sqrt{n}-\epsilon_{5}^{(g)}\sqrt{n}>\xi_{\pm 1}^{(l)}\nonumber \\
& \Leftrightarrow & (1-\epsilon_{1}^{(m)})\sqrt{\alpha f_{gar}(\kappa)}-(1+\epsilon_{1}^{(n)})\sqrt{\frac{2}{\pi}}-\epsilon_{5}^{(g)}>\frac{\xi_{\pm 1}^{(l)}}{\sqrt{n}},\label{eq:negcondxipu}
\end{eqnarray}
one then has from (\ref{eq:negprobanal2})
\begin{equation}
\lim_{n\rightarrow\infty}P(\min_{\x_i^2=1}\max_{\|\lambda\|_2=1,\lambda_i\geq 0}(\g^T\lambda+\h^T\x+\kappa\lambda^T\1-\epsilon_{5}^{(g)}\sqrt{n})\geq \xi_{\pm 1}^{(l)})\geq 1.\label{eq:negprobanal3}
\end{equation}

We will also need the following simple estimate related to the left hand side of the inequality in (\ref{eq:negproblemma}). From (\ref{eq:negproblemma}) one has the following as the probability of interest
\begin{equation}
P(\min_{\x_i^2=1}\max_{\|\lambda\|_2=1,\lambda_i\geq 0}(\kappa\lambda^T\1-\lambda^TH\x+g-\epsilon_{5}^{(g)}\sqrt{n}-\xi_{\pm 1}^{(l)})\geq 0).\label{eq:leftnegprobanal0}
\end{equation}
Following again what was done in \cite{StojnicGardGen13} between equations $(21)$ and $(24)$ one has,
assuming that (\ref{eq:negcondxipu}) holds,
\begin{equation}
\hspace{-.5in}\lim_{n\rightarrow\infty}P(\min_{\x_i^2=1}\max_{\|\lambda\|_2=1,\lambda_i\geq 0}(\kappa\lambda^T\1-\lambda^TH\x)\geq \xi_{\pm1}^{(l)})\geq \lim_{n\rightarrow\infty}P(\min_{\x_i^2=1}\max_{\|\lambda\|_2=1,\lambda_i\geq 0}(\g^T\y+\h^T\x+\kappa\lambda^T\1-\epsilon_{5}^{(g)}\sqrt{n})\geq \xi_{\pm 1}^{(l)})\geq 1.\label{eq:leftnegprobanal3}
\end{equation}

We summarize the above results in the following theorem.

\begin{theorem}
Let $H$ be an $m\times n$ matrix with i.i.d. standard normal components. Let $n$ be large and let $m=\alpha n$, where $\alpha>0$ is a constant independent of $n$. Let $\xi_{\pm 1}$ be as in (\ref{eq:feaspm1}) and let $\kappa$ be a scalar constant independent of $n$. Let all $\epsilon$'s be arbitrarily small constants independent of $n$. Further, let $\g_i$ be a standard normal random variable and set
\begin{equation}
f_{gar}(\kappa) = \frac{1}{\sqrt{2\pi}}\int_{-\kappa}^{\infty}(\g_i+\kappa)^2e^{-\frac{\g_i^2}{2}}d\g_i
=\frac{\kappa e^{-\frac{\kappa^2}{2}}}{\sqrt{2\pi}}+\frac{(\kappa^2+1)\mbox{erfc}\left ( -\frac{\kappa}{\sqrt{2}}\right )}{2}.\label{eq:thmerrc30}
\end{equation}

Let $\xi_{\pm 1}^{(l)}$ be a scalar such that
\begin{equation}
(1-\epsilon_{1}^{(m)})\sqrt{\alpha f_{gar}(\kappa)}-(1+\epsilon_{1}^{(n)})\sqrt{\frac{2}{\pi}}-\epsilon_{5}^{(g)}  >  \frac{\xi_{\pm 1}^{(l)}}{\sqrt{n}}.\label{eq:condxinthmstoc30}
\end{equation}
Then
\begin{equation}
\hspace{-.3in} \lim_{n\rightarrow\infty}P(\xi_{\pm 1}\geq \xi_{\pm 1}^{(l)})=\lim_{n\rightarrow\infty}P(\min_{\x_i^2=1}\max_{\|\lambda\|_2=1,\lambda_i\geq 0}(\kappa\lambda^T\1-\lambda^TH\x)\geq \xi_{\pm 1}^{(l)})\geq 1. \label{eq:probthmstoc30poskappa}
\end{equation}
\label{thm:pm1c30}
\end{theorem}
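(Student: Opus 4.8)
The plan is to reassemble the pieces prepared in Section \ref{sec:probanalrigpm1} and to follow, essentially line for line, the spherical--perceptron argument of \cite{StojnicGardGen13}; the only structural change is that the feasible set $\|\x\|_2=1$ is replaced by the finite cube $\x_i^2=\frac{1}{n}$. First I would take $\zeta_\lambda=-\kappa\lambda^T\1+\epsilon_5^{(g)}\sqrt n+\xi_{\pm1}^{(l)}$ and apply Lemma \ref{lemma:negproblemma}, i.e.\ the Gordon comparison inequality of Theorem \ref{thm:Gordonmesh1}. Inside the left--hand probability the expression $-\lambda^TH\x+g-\zeta_\lambda=(\kappa\lambda^T\1-\lambda^TH\x)+g-\epsilon_5^{(g)}\sqrt n-\xi_{\pm1}^{(l)}$ has its $g$ and its constant terms independent of $\x$ and $\lambda$, so after taking $\min_{\x}\max_\lambda$ and recalling the definition of $\xi_{\pm1}$ in (\ref{eq:feaspm1}) the lemma reads
\[
P\big(\xi_{\pm1}+g-\epsilon_5^{(g)}\sqrt n\geq\xi_{\pm1}^{(l)}\big)\ \geq\ P\Big(\min_{\x_i^2=\frac{1}{n}}\max_{\|\lambda\|_2=1,\,\lambda_i\geq0}\big(\g^T\lambda+\h^T\x+\kappa\lambda^T\1-\epsilon_5^{(g)}\sqrt n\big)\geq\xi_{\pm1}^{(l)}\Big).
\]
Thus it suffices to lower--bound the decoupled right--hand side and then remove the harmless $g$ on the left.

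Second, I would evaluate the right--hand side exactly as in \cite{StojnicGardGen13}. The minimization over the cube is explicit, $\min_{\x_i^2=1/n}\h^T\x=-\frac{1}{\sqrt n}\sum_{i=1}^n|\h_i|$, while the maximization over the nonnegative unit sphere gives $\max_{\|\lambda\|_2=1,\,\lambda_i\geq0}(\g+\kappa\1)^T\lambda=\|(\g+\kappa\1)_+\|_2$, so the event reduces to $\{\|(\g+\kappa\1)_+\|_2-\frac{1}{\sqrt n}\sum_{i=1}^n|\h_i|-\epsilon_5^{(g)}\sqrt n\geq\xi_{\pm1}^{(l)}\}$. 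Both quantities are Lipschitz functions of i.i.d.\ standard Gaussians, hence concentrate around their means: $E\|(\g+\kappa\1)_+\|_2^2=m\,f_{gar}(\kappa)$ with $f_{gar}$ as in (\ref{eq:thmerrc30}), so $\|(\g+\kappa\1)_+\|_2\approx\sqrt{m\,f_{gar}(\kappa)}$, and $E|\h_i|=\sqrt{2/\pi}$, so $\frac{1}{\sqrt n}\sum_{i=1}^n|\h_i|\approx\sqrt n\,\sqrt{2/\pi}$. Combining the two one--sided concentration estimates with exponentially small failure probabilities, as in (\ref{eq:negprobanal2}), shows that whenever $\xi_{\pm1}^{(l)}$ satisfies the hypothesis (\ref{eq:condxinthmstoc30}), namely $(1-\epsilon_1^{(m)})\sqrt{\alpha f_{gar}(\kappa)}-(1+\epsilon_1^{(n)})\sqrt{2/\pi}-\epsilon_5^{(g)}>\xi_{\pm1}^{(l)}/\sqrt n$, the right--hand probability tends to $1$.

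Third, I would transfer back. By the Gordon inequality the left--hand probability then tends to $1$ as well; and since $g$ is a single standard normal independent of $H$, the event $g\leq\epsilon_5^{(g)}\sqrt n$ has overwhelming probability for any fixed $\epsilon_5^{(g)}>0$, so on the intersection one gets $\xi_{\pm1}\geq\xi_{\pm1}^{(l)}+\epsilon_5^{(g)}\sqrt n-g\geq\xi_{\pm1}^{(l)}$. Hence $\lim_{n\to\infty}P(\xi_{\pm1}\geq\xi_{\pm1}^{(l)})\geq1$, which by the definition of $\xi_{\pm1}$ in (\ref{eq:feaspm1}) is exactly the asserted (\ref{eq:probthmstoc30poskappa}).

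I do not expect a genuine obstacle: the proof is the spherical one with a single substitution, and all the heavy lifting (the Gordon comparison, the value $f_{gar}$, the concentration bookkeeping) is already available from Section \ref{sec:knownres} and \cite{StojnicGardGen13}. The one point that deserves a line of care is that Lemma \ref{lemma:negproblemma} continues to apply with the discrete cube in place of the sphere --- which is immediate, since the covariance hypotheses of Theorem \ref{thm:Gordonmesh1} impose nothing on the index sets over which the ensuing $\min$ and $\max$ range --- together with the elementary but essential observation that on the cube $\min_{\x}\h^T\x$ equals $-\frac{1}{\sqrt n}\sum_i|\h_i|\sim-\sqrt{2/\pi}\,\sqrt n$ rather than the spherical $-\|\h\|_2\sim-\sqrt n$. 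It is precisely this replacement of ``$1$'' by ``$\sqrt{2/\pi}$'' that makes the resulting (infeasibility) estimate $\alpha>\frac{2}{\pi f_{gar}(\kappa)}$ smaller than in the spherical case and that recovers the replica prediction of \cite{GarDer88,GutSte90} as a rigorous upper bound.
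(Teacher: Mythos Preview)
Your proposal is correct and mirrors the paper's own argument essentially step for step: the same choice of $\zeta_\lambda$, the same application of Lemma~\ref{lemma:negproblemma}, the same explicit evaluation $\min_{\x}\h^T\x=-\frac{1}{\sqrt n}\sum_i|\h_i|$ and $\max_\lambda(\g+\kappa\1)^T\lambda=\|(\g+\kappa\1)_+\|_2$, and the same concentration--and--transfer bookkeeping borrowed from \cite{StojnicGardGen13}. Your closing remark that the only real change from the spherical case is the replacement of $1$ by $\sqrt{2/\pi}$ is exactly the point the paper is making.
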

\begin{proof}
Follows from the above discussion and the analysis presented in \cite{StojnicGardGen13}.
\end{proof}

In a more informal language (as earlier, essentially ignoring all technicalities and $\epsilon$'s) one has that as long as
\begin{equation}
\alpha>\frac{2}{\pi}\frac{1}{f_{gar}(\kappa)},\label{eq:condalphauncorlberr}
\end{equation}
the problem in (\ref{eq:defprobucor2pm1}) will be infeasible with overwhelming probability. It is an easy exercise to show that the right hand side of (\ref{eq:condalphauncorlberr}) matches the right-hand side of (\ref{eq:garstorcapdisc}). This is then enough to conclude that the prediction for the storage capacity given in \cite{GarDer88} for $\pm 1$ perceptron is in fact a rigorous upper bound on its true value.

The results obtained based on the above theorem as well as those predicted based on the replica theory and given in (\ref{eq:garstorcapdisc}) (and of course in \cite{GarDer88}) are presented in Figure \ref{fig:discperc30}. For the values of $\alpha$ that are to the right of the given curve the memory will not operate correctly with overwhelming probability. This of course follows from the fact that with overwhelming probability over $H$ the inequalities in (\ref{eq:defprobucor2pm1}) will not be simultaneously satisfiable.

\begin{figure}[htb]
\centering
\centerline{\epsfig{figure=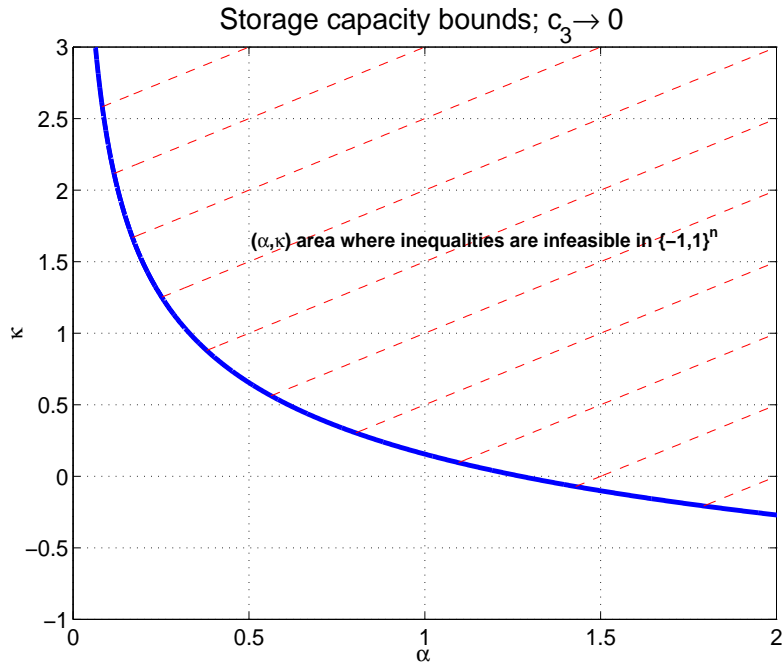,width=10.5cm,height=9cm}}
\caption{$\kappa$ as a function of $\alpha$; $\x\in\left \{-\frac{1}{\sqrt{n}},\frac{1}{\sqrt{n}}\right \}^n$}
\label{fig:discperc30}
\end{figure}

\subsection{Lowering the storage capacity upper bound}
\label{sec:discperlow}

The results we presented in the previous section provide a rigorous upper bound on the storage capacity of $\pm 1$ perceptron. As we have mentioned in Section \ref{sec:knownres} it had been known already from the initial considerations in \cite{GarDer88} that the upper bounds we presented in the previous sections for certain values of $\kappa$ are strict (and in fact quite far away from the optimal values). In this section we will follow the strategy we employed in \cite{StojnicGardSphNeg13,StojnicGardSphErr13} for studying scenarios where the standard upper bounds are potentially non-exact. Such a strategy essentially attempts to lower the upper bounds provided in the previous subsection. It does so by attempting to lift the lower bounds on $\xi_{\pm 1}$. After doing so we will come to a point to reveal an interesting phenomenon happening in the analysis of $\pm 1$ perceptrons. Namely, in certain range of $\kappa$ the upper bounds of the previous sections will indeed end up being lowered by the strategy that we will present. However, it will turn out that the only lowering that we were able to uncover is the one that corresponds to the simple combinatorial bounds given in Section \ref{sec:rigresdiscsimpcomb}. However, before arriving to such a conclusion we will need to resolve a few technical problems.

Before proceeding further with the presentation of the above mentioned strategy, we first recall on a few technical details from previous sections that we will need here again. We start by recalling on the optimization problem that we will consider here. As is probably obvious, it is basically the one given in (\ref{eq:feaspm1})
\begin{eqnarray}
\xi_{\pm 1}=\min_{\x} \max_{\lambda\geq 0} & &  \kappa\lambda^T\1- \lambda^T H\x \nonumber \\
\mbox{subject to} & & \|\lambda\|_2= 1\nonumber \\
& & \x_i^2=1.\label{eq:feaspm1low}
\end{eqnarray}
As mentioned below (\ref{eq:feaspm1}), a probabilistic characterization of the sign of $\xi_{\pm 1}$ would be enough to determine the storage capacity or its bounds. Below, we provide a way similar to the one from the previous subsection that can also be used to probabilistically characterize $\xi_{\pm 1}$. Moreover, as mentioned at the beginning of the previous section, since $\xi_{\pm 1}$ will concentrate around its mean for our purposes here it will then be enough to study only its mean $E\xi_{\pm 1}$. We do so by relying on the strategy developed in \cite{StojnicMoreSophHopBnds10} (and employed in \cite{StojnicGardSphNeg13,StojnicGardSphErr13}) and ultimately on the following set of results from \cite{Gordon85}. (The following theorem presented in \cite{StojnicMoreSophHopBnds10} is in fact a slight alternation of the original results from \cite{Gordon85}.)
\begin{theorem}(\cite{Gordon85})
\label{thm:Gordonneg1} Let $X_{ij}$ and $Y_{ij}$, $1\leq i\leq n,1\leq j\leq m$, be two centered Gaussian processes which satisfy the following inequalities for all choices of indices
\begin{enumerate}
\item $E(X_{ij}^2)=E(Y_{ij}^2)$
\item $E(X_{ij}X_{ik})\geq E(Y_{ij}Y_{ik})$
\item $E(X_{ij}X_{lk})\leq E(Y_{ij}Y_{lk}), i\neq l$.
\end{enumerate}
Let $\psi_{ij}()$ be increasing functions on the real axis. Then
\begin{equation*}
E(\min_{i}\max_{j}\psi_{ij}(X_{ij}))\leq E(\min_{i}\max_{j}\psi_{ij}(Y_{ij})).
\end{equation*}
Moreover, let $\psi_{ij}()$ be decreasing functions on the real axis. Then
\begin{equation*}
E(\max_{i}\min_{j}\psi_{ij}(X_{ij}))\geq E(\max_{i}\min_{j}\psi_{ij}(Y_{ij})).
\end{equation*}
\begin{proof}
The proof of all statements but the last one is of course given in \cite{Gordon85}. The proof of the last statement trivially follows and in a slightly different scenario is given for completeness in \cite{StojnicMoreSophHopBnds10}.
\end{proof}
\end{theorem}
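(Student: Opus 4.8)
The first two displayed inequalities are the min--max comparison theorem of \cite{Gordon85}, and the plan is to deduce them from Theorem~\ref{thm:Gordonmesh1}, which is already available above; the last displayed inequality is then obtained by a sign change, which is the ``trivially follows'' step recorded in \cite{StojnicMoreSophHopBnds10}.

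First I would reduce the increasing-$\psi_{ij}$ case to a statement about tail probabilities. For each real $\lambda$ and each pair $(i,j)$ put $c_{ij}(\lambda)=\inf\{x\in\mathbb{R}:\psi_{ij}(x)\geq\lambda\}$. Since $\psi_{ij}$ is nondecreasing, $\psi_{ij}(x)\geq\lambda$ for every $x>c_{ij}(\lambda)$ and $\psi_{ij}(x)<\lambda$ for every $x<c_{ij}(\lambda)$, so by atomlessness of the Gaussian marginals the events $\{\psi_{ij}(X_{ij})\geq\lambda\}$ and $\{X_{ij}\geq c_{ij}(\lambda)\}$ coincide almost surely; hence
\[
\Big\{\min_i\max_j\psi_{ij}(X_{ij})\geq\lambda\Big\}=\bigcap_i\bigcup_j\{X_{ij}\geq c_{ij}(\lambda)\}\quad\text{a.s.},
\]
and likewise for $Y$. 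Applying Theorem~\ref{thm:Gordonmesh1} with the deterministic thresholds $\lambda_{ij}=c_{ij}(\lambda)$ gives
\[
P\Big(\min_i\max_j\psi_{ij}(X_{ij})\geq\lambda\Big)\leq P\Big(\min_i\max_j\psi_{ij}(Y_{ij})\geq\lambda\Big)
\]
for every $\lambda$, i.e.\ $\min_i\max_j\psi_{ij}(X_{ij})$ is stochastically dominated by $\min_i\max_j\psi_{ij}(Y_{ij})$. Integrating $EW=\int_0^\infty P(W\geq t)\,dt-\int_0^\infty P(W\leq -t)\,dt$ then upgrades the stochastic ordering to $E(\min_i\max_j\psi_{ij}(X_{ij}))\leq E(\min_i\max_j\psi_{ij}(Y_{ij}))$, which is the first claim; taking $\psi_{ij}$ equal to the identity recovers Theorem~\ref{thm:Gordonmesh2} as a special case. (Alternatively one may run Gordon's original interpolation argument directly: smooth $\min_i\max_j(\cdot)$ by the surrogate built from $-\beta^{-1}\log\sum_i e^{-\beta(\cdot)}$ and $\beta^{-1}\log\sum_j e^{\beta(\cdot)}$, put $Z_{ij}(t)=\sqrt{t}\,X_{ij}+\sqrt{1-t}\,Y_{ij}$, check via the Gaussian integration-by-parts formula that $\frac{d}{dt}E[\,\cdot\,]$ has the sign forced by hypotheses 1--3, and let $\beta\to\infty$; the composition with each increasing $\psi_{ij}$ does not affect which coordinate attains the inner maximum or the outer minimum and so is harmless.)

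For the last statement, set $\widetilde\psi_{ij}=-\psi_{ij}$, which is increasing when $\psi_{ij}$ is decreasing. Using the coordinatewise identity $\max_i\min_j a_{ij}=-\min_i\max_j(-a_{ij})$,
\[
\max_i\min_j\psi_{ij}(X_{ij})=-\min_i\max_j\widetilde\psi_{ij}(X_{ij}),
\]
and identically for $Y$. Hypotheses 1--3 involve only the covariances of $X$ and $Y$ and are untouched by replacing $\psi_{ij}$ by $\widetilde\psi_{ij}$, so the already-established increasing case applied to $\widetilde\psi_{ij}$ yields $E(\min_i\max_j\widetilde\psi_{ij}(X_{ij}))\leq E(\min_i\max_j\widetilde\psi_{ij}(Y_{ij}))$; negating both sides gives $E(\max_i\min_j\psi_{ij}(X_{ij}))\geq E(\max_i\min_j\psi_{ij}(Y_{ij}))$, as required.

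The conceptual content all sits in Theorem~\ref{thm:Gordonmesh1} (or in the interpolation computation); the remaining work is bookkeeping. The two points that need care are (i) the level-set rewriting when the $\psi_{ij}$ are not strictly monotone or not continuous --- resolved by the atomlessness of the Gaussian marginals, so that strict and non-strict inequalities agree almost surely --- and (ii) finiteness of the expectations, needed so that stochastic dominance passes to an inequality of means; this is automatic in the applications here, where each $\psi_{ij}$ is a fixed, well-behaved function (identity, exponential, $\dots$) of coordinates with finite exponential moments. If one prefers the interpolation proof, the analogous technical obstacle is the uniform control of the smoothing error as $\beta\to\infty$, which is likewise routine.
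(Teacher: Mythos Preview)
Your proposal is correct and matches the paper's approach: the paper simply cites \cite{Gordon85} for the increasing-$\psi_{ij}$ statement (which you correctly derive from Theorem~\ref{thm:Gordonmesh1} via level sets and stochastic dominance) and says the decreasing-$\psi_{ij}$ statement ``trivially follows,'' which is precisely your sign-change argument $\max_i\min_j\psi_{ij}=-\min_i\max_j(-\psi_{ij})$. You have supplied the details the paper omits, and your handling of the technical caveats (atomlessness, integrability) is appropriate.
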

The strategy that we will present below will utilize the above theorem to lift the above mentioned lower bound on $\xi_{\pm 1}$ (of course since we talk in probabilistic terms, under bound on $\xi_{\pm 1}$ we essentially assume a bound on $E\xi_{\pm 1}$). We do mention again that in Section \ref{sec:probanalrigpm1} we relied on a variant of the above theorem to create a probabilistic lower bound on $\xi_{\pm 1}$. However, the strategy employed in Section \ref{sec:probanalrigpm1} relied only on a basic version of the above theorem which assumes $\psi_{ij}(x)=x$. Here, we will substantially upgrade the strategy from Section \ref{sec:probanalrigpm1} by looking at a very simple (but way better) different version of $\psi_{ij}()$.

\subsubsection{Lifting lower bound on $\xi_{\pm 1}$}
\label{sec:uncorgardlb}

In \cite{StojnicMoreSophHopBnds10,StojnicGardSphNeg13} we established lemmas very similar to the following one:
\begin{lemma}
Let $A$ be an $m\times n$ matrix with i.i.d. standard normal components. Let $\g$ and $\h$ be $m\times 1$ and $n\times 1$ vectors, respectively, with i.i.d. standard normal components. Also, let $g$ be a standard normal random variable and let $c_3$ be a positive constant. Then
\begin{equation}
E(\max_{\x_i^2=1}\min_{\|\lambda\|_2=1,\lambda_i\geq 0}e^{-c_3(-\lambda^T H\x + g +\kappa\lambda^T\1)})\leq E(\max_{\x_i^2=1}\min_{\|\lambda\|_2=1,\lambda_1\geq 0}e^{-c_3(\g^T\lambda+\h^T\x+\kappa\lambda^T\1)}).\label{eq:negexplemmalow}
\end{equation}\label{lemma:negexplemmalow}
\end{lemma}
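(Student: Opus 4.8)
The plan is to derive (\ref{eq:negexplemmalow}) directly from the second part of Theorem \ref{thm:Gordonneg1} (the statement about decreasing $\psi$), in exactly the manner of the ``very similar'' lemmas of \cite{StojnicMoreSophHopBnds10,StojnicGardSphNeg13}. Compared with Lemma \ref{lemma:negproblemma}, which used the plain comparison of Theorem \ref{thm:Gordonmesh1} (i.e.\ $\psi(u)=u$), the only change here is to run the same argument with the nonlinear comparison function $\psi(u)=e^{-c_3u}$.

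I would index two centered Gaussian processes by the pair $(\x,\lambda)$, with $\x$ over the (unit-normalized, as in (\ref{eq:feaspm1})) constraint set of (\ref{eq:negexplemmalow}), so that $\|\x\|_2=1$, and $\lambda$ over the spherical cap $\{\lambda:\ \|\lambda\|_2=1,\ \lambda_i\geq 0\}$, and set
\begin{equation*}
X_{\x,\lambda}=\g^T\lambda+\h^T\x,\qquad Y_{\x,\lambda}=-\lambda^TH\x+g ,
\end{equation*}
with $H,\g,\h,g$ mutually independent. The next step is to check the three hypotheses of Theorem \ref{thm:Gordonneg1}. Since $\|\lambda\|_2=\|\x\|_2=1$, one has $E(X_{\x,\lambda}^2)=\|\lambda\|_2^2+\|\x\|_2^2=2=\|\lambda\|_2^2\|\x\|_2^2+1=E(Y_{\x,\lambda}^2)$, which is hypothesis (i); for fixed $\x$, $E(X_{\x,\lambda}X_{\x,\lambda'})=\lambda^T\lambda'+\|\x\|_2^2=\lambda^T\lambda'+1=(\lambda^T\lambda')\|\x\|_2^2+1=E(Y_{\x,\lambda}Y_{\x,\lambda'})$, so hypothesis (ii) holds with equality; and for $\x\neq\x'$, $E(X_{\x,\lambda}X_{\x',\lambda'})=\lambda^T\lambda'+\x^T\x'$ while $E(Y_{\x,\lambda}Y_{\x',\lambda'})=(\lambda^T\lambda')(\x^T\x')+1$, so hypothesis (iii), i.e.\ $E(X_{\x,\lambda}X_{\x',\lambda'})\leq E(Y_{\x,\lambda}Y_{\x',\lambda'})$, is equivalent to $(1-\lambda^T\lambda')(1-\x^T\x')\geq 0$, which holds because each factor is nonnegative by Cauchy--Schwarz (all four vectors are unit vectors). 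This last identity is the one place where the unit normalization of $\x$ is genuinely used.

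Then I would take $\psi_{\x,\lambda}(u)=e^{-c_3(u+\kappa\lambda^T\1)}$, which for each fixed index is strictly decreasing on all of $\mathbb R$ (as $c_3>0$; the offset $\kappa\lambda^T\1$ is a constant that is allowed to depend on the index $\lambda$). Applying the second statement of Theorem \ref{thm:Gordonneg1}, with $\x$ in the role of the $\max$-index and $\lambda$ in the role of the $\min$-index, gives
\begin{equation*}
E\Big(\max_{\x}\min_{\lambda}\psi_{\x,\lambda}(Y_{\x,\lambda})\Big)\;\le\;E\Big(\max_{\x}\min_{\lambda}\psi_{\x,\lambda}(X_{\x,\lambda})\Big),
\end{equation*}
which, upon inserting the definitions of $X_{\x,\lambda}$, $Y_{\x,\lambda}$ and $\psi_{\x,\lambda}$, is exactly (\ref{eq:negexplemmalow}); note that the bilinear process $Y_{\x,\lambda}$ sits on the \emph{smaller} side, matching the ``$\leq$'' as stated in the lemma.

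The step I expect to require the most attention is not the comparison itself but the passage to the stated index sets: Theorem \ref{thm:Gordonneg1} is formulated for finitely many indices, whereas here $\lambda$ ranges over a continuum (the $\x$-set is already finite, of cardinality $2^n$). I would handle this by the standard device used in \cite{Gordon85,StojnicMoreSophHopBnds10,StojnicGardSphNeg13}: apply the theorem along an increasing sequence of finite nets of the compact cap $\{\|\lambda\|_2=1,\ \lambda_i\geq 0\}$ and pass to the limit using continuity of $\psi$ and of the two processes together with dominated convergence, observing that $\min_{\lambda}\psi_{\x,\lambda}(\,\cdot\,)$ is dominated by its value at any fixed $\lambda_0$, an integrable exponential of a Gaussian, and that the outer maximum over the finite hypercube preserves measurability and integrability. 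As a consistency check, letting $c_3\to 0$ in (\ref{eq:negexplemmalow}) should reduce it to the linearized comparison underlying Theorem \ref{thm:pm1c30}.
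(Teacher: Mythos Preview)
Your proposal is correct and is exactly the approach the paper has in mind: the paper's own proof simply says ``the proof is the same as the proof of the corresponding lemma in \cite{StojnicMoreSophHopBnds10}; the only difference is in the structure of the sets of allowed values for $\x$ and $\lambda$,'' and what you wrote is precisely that argument spelled out (Gordon's comparison with $\psi(u)=e^{-c_3(u+\kappa\lambda^T\1)}$, the same covariance check reducing to $(1-\lambda^T\lambda')(1-\x^T\x')\geq 0$, and the standard net/limit passage for the continuous $\lambda$-cap). Your reading of the constraint as $\x_i^2=1/n$ so that $\|\x\|_2=1$, consistent with (\ref{eq:feaspm1}), is the intended one; without that normalization the inequality $(1-\x^T\x')\geq 0$ would fail and condition (iii) would not hold.
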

\begin{proof}
The proof is the same as the proof of the corresponding lemma in \cite{StojnicMoreSophHopBnds10}. The only difference is in the structure of the sets of allowed values for $\x$ and $\lambda$. However, such a difference introduces no structural changes in the proof.
\end{proof}

Following step by step what was done after Lemma 3 in \cite{StojnicMoreSophHopBnds10} one arrives at the following analogue of \cite{StojnicMoreSophHopBnds10}'s equation $(57)$:
\begin{multline}
E(\min_{\x_i^2=1}\max_{\|\lambda\|_2=1,\lambda_i\geq 0}(-\lambda^TH\x+\kappa\lambda^T\1))\\\hspace{-.3in}\geq
\frac{c_3}{2}-\frac{1}{c_3}\log(E(\max_{\x_i^2=1}(e^{-c_3\h^T\x})))
-\frac{1}{c_3}\log(E(\min_{\|\lambda\|_2=1,\lambda_i\geq 0}(e^{-c_3(\g^T\lambda+\kappa\lambda^T\1)}))).\\\label{eq:chneg8}
\end{multline}
Let $c_3=c_3^{(s)}\sqrt{n}$ where $c_3^{(s)}$ is a constant independent of $n$. Then (\ref{eq:chneg8}) becomes
\begin{equation}
\frac{E(\min_{\x_i^2=1}\max_{\|\lambda\|_2=1,\lambda_i\geq 0}(-\lambda^T H\x+\kappa\lambda^T\1))}{\sqrt{n}}
\geq
 -(-\frac{c_3^{(s)}}{2}+I_{sph}(c_3^{(s)})+I_{\pm 1}(c_3^{(s)},\alpha,\kappa)),\label{eq:chneg9}
\end{equation}
where
\begin{eqnarray}
I_{\pm 1}(c_3^{(s)}) & = & \frac{1}{nc_3^{(s)}}\log(E(\max_{\x_i^2=1}(e^{-c_3^{(s)}\sqrt{n}\h^T\x})))\nonumber \\
I_{per}(c_3^{(s)},\alpha,\kappa) & = & \frac{1}{nc_3^{(s)}}\log(E(\min_{\|\lambda\|_2=1,\lambda_i\geq 0}(e^{-c_3^{(s)}\sqrt{n}(\g^T\lambda+\kappa\lambda^T\1)}))).\nonumber \\\label{eq:defIs}
\end{eqnarray}
Moreover, \cite{StojnicMoreSophHopBnds10} also established
\begin{equation}
I_{\pm 1}(c_3^{(s)}) = \frac{1}{nc_3^{(s)}}\log(E(\max_{\x_i^2=1}(e^{-c_3^{(s)}\sqrt{n}\h^T\x})))
 = \frac{c_3^{(s)}}{2}+\frac{1}{c_3^{(s)}}\log(\mbox{erfc}(-\frac{c_3^{(s)}}{\sqrt{2}})).\label{eq:ubmorsoph}
\end{equation}
Furthermore, \cite{StojnicGardSphNeg13} established a way to determine $I_{per}(c_3^{(s)},\alpha,\kappa)$. It is exactly as specified in Theorem \ref{thm:liftnegsphper}.

We summarize the above observations in the following theorem.

\begin{theorem}
Let $H$ be an $m\times n$ matrix with i.i.d. standard normal components. Let $n$ be large and let $m=\alpha n$, where $\alpha>0$ is a constant independent of $n$. Let $\xi_{\pm 1}$ be as in (\ref{eq:feaspm1}) and let $\kappa$ be a scalar constant independent of $n$. Set
\begin{equation}
I_{\pm 1}(c_3^{(s)})
 = \frac{c_3^{(s)}}{2}+\frac{1}{c_3^{(s)}}\log(\mbox{erfc}(-\frac{c_3^{(s)}}{\sqrt{2}})).\label{eq:Ipm1thmlow}
\end{equation}
and
\begin{equation}
p  =  1+\frac{c_3^{(s)}}{2\gamma_{per}^{(s)}},
q  =  \frac{c_3^{(s)}\kappa}{2\gamma_{per}^{(s)}},
r  =  \frac{c_3^{(s)}\kappa^2}{4\gamma_{per}^{(s)}},
s  =  -\kappa\sqrt{p}+\frac{q}{\sqrt{p}},
C  =  \frac{exp(\frac{q^2}{2p}-r)}{\sqrt{p}}.\label{eq:helpdefpm1thmlow}
\end{equation}
Also, set
\begin{equation}
I_{per}^{(1)}(c_3^{(s)},\gamma_{per}^{(s)},\kappa)=\frac{1}{2}erfc(\frac{\kappa}{\sqrt{2}})+\frac{C}{2}(erfc(\frac{s}{\sqrt{2}})).\label{eq:Iper1pm1thmlow}
\end{equation}
and
\begin{equation}
I_{per}(c_3^{(s)},\alpha,\kappa)=\max_{\gamma_{per}^{(s)}\geq 0}(\gamma_{per}^{(s)}+\frac{1}{c_3^{(s)}}\log(I_{per}^{(1)}(c_3^{(s)},\gamma_{per}^{(s)},\kappa))).\label{eq:Iperpm1thmlow}
\end{equation}
If $\alpha$ is such that
\begin{equation}
\min_{c_3^{(s)}\geq 0}(-\frac{c_3^{(s)}}{2}+I_{\pm 1}(c_3^{(s)})+I_{per}(c_3^{(s)},\alpha,\kappa))<0,\label{eq:condalphapm1thmlow}
\end{equation}
then (\ref{eq:defprobucor2pm1}) is infeasible with overwhelming probability.
\label{thm:pm1optc3}
\end{theorem}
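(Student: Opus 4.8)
The plan is to establish the statement as the $\{-1,1\}$-cube analogue of Theorem~\ref{thm:liftnegsphper}. As in (\ref{eq:feaspm1})--(\ref{eq:feaspm1low}), the feasibility of (\ref{eq:defprobucor2pm1}) is controlled by the sign of $\xi_{\pm 1}$ (if $\xi_{\pm 1}>0$ the system is infeasible), and since $\xi_{\pm 1}$ concentrates around $E\xi_{\pm 1}$ exactly as in the spherical case of \cite{StojnicGardGen13,StojnicGardSphNeg13}, it is enough to exhibit a single admissible value of the free parameter $c_3^{(s)}$ for which $E\xi_{\pm 1}/\sqrt n$ is bounded below by a strictly positive constant. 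The decisive structural point is that the cube constraint $\x_i^2=1$ touches only the $\x$-block of the problem; the $\lambda$-block, a maximization over $\|\lambda\|_2=1,\ \lambda_i\ge 0$, is identical to the one in the negative spherical perceptron, so the functional $I_{per}$ and the auxiliary quantities $p,q,r,s,C$ of (\ref{eq:helpdefpm1thmlow}) are imported verbatim from \cite{StojnicGardSphNeg13}.

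First I would invoke Lemma~\ref{lemma:negexplemmalow}, the instance of Theorem~\ref{thm:Gordonneg1} obtained with the decreasing functions $\psi_{ij}(x)=e^{-c_3 x}$ applied to the bilinear form $\lambda^T H\x$: for every $c_3>0$,
\[
E\Big(\max_{\x_i^2=1}\min_{\|\lambda\|_2=1,\lambda_i\ge 0}e^{-c_3(-\lambda^T H\x+g+\kappa\lambda^T\1)}\Big)\ \le\ E\Big(\max_{\x_i^2=1}e^{-c_3\h^T\x}\Big)\;E\Big(\min_{\|\lambda\|_2=1,\lambda_i\ge 0}e^{-c_3(\g^T\lambda+\kappa\lambda^T\1)}\Big),
\]
where on the right the exponential of the sum is split into an $\x$-factor times a $\lambda$-factor and the independence of $\g$ and $\h$ is used, while on the left the auxiliary Gaussian $g$ pulls out of both the $\max_\x\min_\lambda$ and the expectation, leaving $e^{c_3^2/2}\,E\,e^{-c_3\xi_{\pm 1}}$. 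Applying $-\tfrac1{c_3}\log(\cdot)$ to both sides and then passing from the logarithmic moment down to $E\xi_{\pm 1}$ by Jensen's inequality (equivalently, by the concentration of this free-energy type quantity, exactly as after Lemma~3 in \cite{StojnicMoreSophHopBnds10}) yields (\ref{eq:chneg8}), the $\tfrac{c_3}{2}$ there being precisely the $e^{c_3^2/2}$ contribution of $g$. Rescaling $c_3=c_3^{(s)}\sqrt n$ and dividing through by $\sqrt n$ turns (\ref{eq:chneg8}) into (\ref{eq:chneg9}), i.e.
\[
\frac{E\xi_{\pm 1}}{\sqrt n}\ \ge\ -\Big(-\tfrac{c_3^{(s)}}{2}+I_{\pm 1}(c_3^{(s)})+I_{per}(c_3^{(s)},\alpha,\kappa)\Big)-o(1),
\]
with $I_{\pm 1},I_{per}$ the normalized log-moments of (\ref{eq:defIs}).

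It remains to evaluate the two scalar functionals and optimize over $c_3^{(s)}$. For $I_{\pm 1}$ the inner maximum over the cube is attained at $\x_i\propto-\mbox{sign}(\h_i)$, so the expectation factorizes over the $n$ coordinates into a power of the moment generating function of $|g|$ for a standard normal $g$; this is the elementary computation of \cite{StojnicMoreSophHopBnds10} and it produces the closed form (\ref{eq:Ipm1thmlow}). For $I_{per}$ nothing changes relative to the negative spherical perceptron: introduce a Lagrange multiplier $\gamma_{per}^{(s)}$ for $\|\lambda\|_2=1$, decouple over the $m=\alpha n$ coordinates of $\lambda$, carry out the resulting one-dimensional Gaussian integral (this is what produces the constants (\ref{eq:helpdefpm1thmlow}) and the integrand (\ref{eq:Iper1pm1thmlow})), and optimize over $\gamma_{per}^{(s)}\ge 0$ to get (\ref{eq:Iperpm1thmlow}) --- this is exactly Theorem~\ref{thm:liftnegsphper}, which I would cite from \cite{StojnicGardSphNeg13}. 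Since the displayed bound on $E\xi_{\pm 1}/\sqrt n$ holds for every $c_3^{(s)}\ge 0$, it holds with the bracketed term replaced by $\min_{c_3^{(s)}\ge 0}(-\tfrac{c_3^{(s)}}{2}+I_{\pm 1}(c_3^{(s)})+I_{per}(c_3^{(s)},\alpha,\kappa))$; under hypothesis (\ref{eq:condalphapm1thmlow}) this minimum is negative, so $E\xi_{\pm 1}=\Theta(\sqrt n)>0$, and concentration of $\xi_{\pm 1}$ then forces $\xi_{\pm 1}>0$, hence the infeasibility of (\ref{eq:defprobucor2pm1}), with overwhelming probability. (Letting $c_3^{(s)}\to 0$ in (\ref{eq:condalphapm1thmlow}) recovers the bound $\alpha>2/(\pi f_{gar}(\kappa))$ of Theorem~\ref{thm:pm1c30}, so this statement is never weaker than the one of Section~\ref{sec:probanalrigpm1}.)

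The genuinely delicate step is the evaluation of $I_{per}$, not of $I_{\pm 1}$: the $\lambda$-maximization runs over the sphere intersected with the nonnegative orthant, so it decouples coordinatewise only after the multiplier $\gamma_{per}^{(s)}$ is introduced, and one must make sure that the interchange of $\min_\lambda$ with the expectation (legitimate precisely because of the convexity built into the monotone-$\psi$ form of Gordon's comparison, Theorem~\ref{thm:Gordonneg1}) and the ensuing Gaussian integral are handled with the correct branch of $\mbox{erfc}$, which is the bookkeeping behind (\ref{eq:helpdefpm1thmlow})--(\ref{eq:Iper1pm1thmlow}). Fortunately this computation is already carried out for the negative spherical perceptron in \cite{StojnicGardSphNeg13}, so the only genuinely new ingredient here is the elementary evaluation of $I_{\pm 1}$, together with the observation that replacing the sphere by the cube in the $\x$-block leaves the rest of the chain intact.
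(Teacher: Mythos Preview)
Your proposal is correct and follows essentially the same route as the paper: invoke Lemma~\ref{lemma:negexplemmalow}, separate the right-hand side into an $\x$-factor and a $\lambda$-factor, pass through Jensen to obtain (\ref{eq:chneg8})--(\ref{eq:chneg9}), evaluate $I_{\pm 1}$ via the elementary MGF-of-$|g|$ computation from \cite{StojnicMoreSophHopBnds10}, import $I_{per}$ wholesale from \cite{StojnicGardSphNeg13}, and then optimize over $c_3^{(s)}$. The paper's own proof is just the one-line ``follows from the previous discussion by combining (\ref{eq:feaspm1}) and (\ref{eq:chneg9}) and optimizing over $c_3^{(s)}$,'' so you have in fact written out more of the mechanics than the paper does, but the skeleton is identical.
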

\begin{proof}
Follows from the previous discussion by combining (\ref{eq:feaspm1}) and (\ref{eq:chneg9}), and  by noting that the bound given in (\ref{eq:chneg9}) holds for any $c_3^{(s)}\geq 0$ and could therefore be tightened by additionally optimizing over $c_3^{(s)}\geq 0$.
\end{proof}

The results one can obtain for the storage capacity based on the above theorem are presented in Figure \ref{fig:discperoptc3}. In addition to that we also present the results one can obtain based on Theorem \ref{thm:pm1c30}. These are denoted by $c_3\rightarrow 0$ as they can be obtained from Theorem \ref{thm:pm1optc3} by taking $c_3^{(s)}\rightarrow 0$. Furthermore, we also present the results one can obtain based on the simple combinatorial bound discussed in Section \ref{sec:rigresdiscsimpcomb} and presented in Figure \ref{fig:discsimpcomb}. As can be seen from Figure \ref{fig:discperoptc3} the optimal values that we found for $c_3^{(s)}$ correspond either to $0$ or to a $c_3^{(s)}$ that eventually gives an $\alpha$ that matches the one obtained in Section \ref{sec:rigresdiscsimpcomb}. In fact, when $c_3^{(s)}=0$ is not optimal we only found $c_{3}^{(s)}\rightarrow \infty$ as a better option. A simple analytical transformation of the results presented in the above theorem (assuming $c_{3}^{(s)}\rightarrow \infty$) indeed produces the upper bound given in (\ref{eq:simpcomb5}). We would view this as in a way somewhat surprising result.

Also, we would like to mention that the results presented in Figure \ref{fig:discperoptc3} should be taken only as an illustration. They are obtained as a result of a numerical optimization. Remaining finite precision errors are of course possible and could affect the validity of the obtained results (we do believe though that this is not the case). Either way, we would like to emphasize once again that the results presented in Theorem \ref{thm:pm1optc3} are completely mathematically rigorous. Their representation given in Figure \ref{fig:discperoptc3} may have been a bit imprecise due to numerical computations needed to obtain the plots shown in the figure.

\begin{figure}[htb]
\centering
\centerline{\epsfig{figure=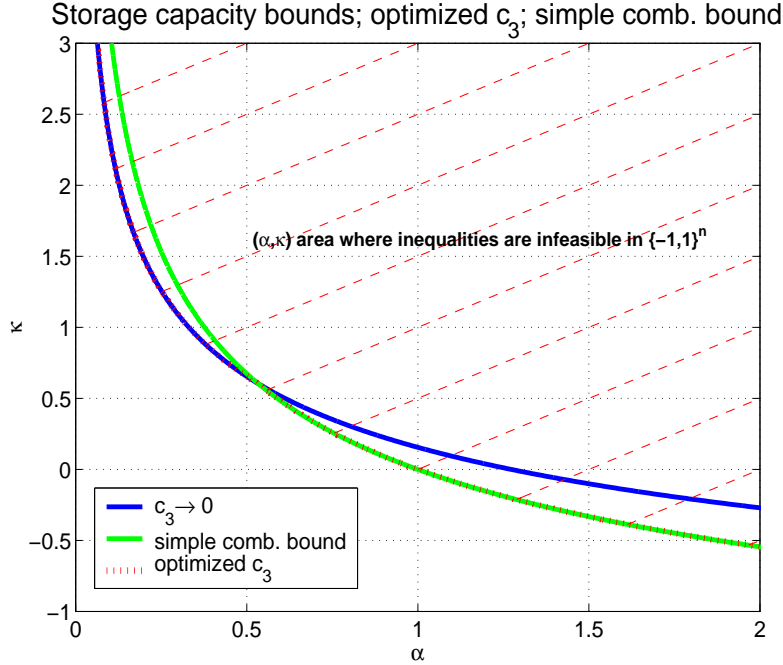,width=10.5cm,height=9cm}}
\caption{$\kappa$ as a function of $\alpha$; optimized $c_3^{(s)}$; $\x\in\left \{-\frac{1}{\sqrt{n}},\frac{1}{\sqrt{n}}\right \}^n$}
\label{fig:discperoptc3}
\end{figure}

\section{$0/1$ perceptrons}
\label{sec:01per}

In this section we will present a collection of mathematically rigorous results related to $0/1$ perceptrons. To make the presentation as smooth as possible we will try to emulate the exposition of Section \ref{sec:discper} as much as possible. As in Section \ref{sec:discper} we will rely on many simplifications of the original perceptron setup introduced in Section \ref{sec:mathsetupper} (and of course earlier in \cite{StojnicGardGen13,StojnicGardSphNeg13,StojnicGardSphErr13}). Following what was done in Section \ref{sec:discper} it is not that hard to recognize that the storage capacity of $0/1$ perceptron can be considered through the following feasibility problem
\begin{eqnarray}
& & H\x\geq \kappa\nonumber \\
& & \x_i\in \left \{0,\frac{1}{\sqrt{n}}\right \},1\leq i\leq n.\label{eq:defprobucor201per}
\end{eqnarray}
As in Section \ref{sec:discper} (and obviously as argued in \cite{StojnicGardGen13,StojnicGardSphNeg13,StojnicGardSphErr13}) one can assume that the elements of $H$ are i.i.d. standard normals and that the dimension of $H$ is $m\times n$. Moreover, we will continue to work in the linear regime, i.e. we will continue to assume that $m=\alpha n$ where $\alpha$ is a constant independent of $n$. Now, if all inequalities in (\ref{eq:defprobucor201per}) are satisfied one can have that the perceptron dynamics discussed in Section \ref{sec:mathsetupper} will be stable and all $m$ patterns could be successfully stored. Before proceeding further with following the exposition of the previous section, we will scale the above problem a bit. In our view, the following transformation will make the presentation of what follows substantially easier on the one hand and will enable us to maintain the same type of scaling as in known references, see e.g. \cite{GutSte90}. Namely, we will first discretize the problem a bit. It is not that hard to see that the set of all allowed $\x$ in (\ref{eq:defprobucor201per}) comes from a collection (basically a union) of disjoint sets ${\cal X}_l,1\leq l\leq n$ where ${\cal X}_l=\{\x|\x_i\in\{0,1\},1\leq i\leq n,\|\x\|_2^2=\frac{l}{n}\}$. Now in the large $n$ limit one can then think of sets ${\cal X}_l,1\leq l\leq n$, as ${\cal X}_{\beta},0<\beta\leq 1$ (one would just need to discretize over $\beta$; that is a straightforward exercise and for the sake of keeping the exposition as free of unnecessary trivial details as possible we skip it). Now, since all quantities that we will consider below will concentrate around its mean values with overwhelming probability the union bounding over a bounded (independent of $n$) discrerized $\beta$ will affect the final results in no way. Given all of that the strategy will be to consider the feasibility of (\ref{eq:defprobucor201per}) for a fixed $\beta$ and then find the best one (in fact since we will be determining an upper bound on the storage capacity the strategy will be to find a worst $\beta$; however, this will naturally become clear as we progress with the presentation). Also, we do want to mention that it is absolutely not necessary to simplify the exposition by discretizing over $\beta$. Our entire exposition that will follow can be easily pushed through even with a variable $\beta$. However, in our view it unnecessarily complicates writings and we find the exposition way more clearer if we fix $\beta$ at the beginning and don't drag it as a variable inside all the derivations that will follow.

Now, we can go back to following further what was done in Section \ref{sec:discper} (and ultimately the strategy presented in \cite{StojnicGardGen13,StojnicGardSphNeg13,StojnicGardSphErr13}). One can then reformulate (\ref{eq:defprobucor201per}) so that the feasibility problem of interest becomes
\begin{eqnarray}
\xi_{01}=\min_{\x} \max_{\lambda\geq 0} & &  \kappa\lambda^T\1- \lambda^TH\x \nonumber \\
\mbox{subject to} & & \|\lambda\|_2= 1\nonumber \\
& & \x_i\in\left \{0,\frac{1}{\sqrt{n}}\right \},1\leq i\leq n.\label{eq:feas01per}
\end{eqnarray}
Clearly, the sign of $\xi_{01}$ determines the feasibility of (\ref{eq:defprobucor201per}). In particular, if $\xi_{01}>0$ then (\ref{eq:defprobucor201per}) is infeasible. Given the random structure of the problem (as earlier, the randomness remains over $H$) one can then pose the following probabilistic feasibility question (essentially a complete analogue to the one posed in earlier section for spherical and $\pm 1$ perceptrons): how small can $m$ be so that $\xi_{01}$ in (\ref{eq:feas01per}) is positive and (\ref{eq:defprobucor201per}) is infeasible with overwhelming probability? What follows provides an answer to such a question.

Before proceeding further we will concretize some of the strategy mentioned above. Namely, one can rewrite (\ref{eq:feas01per}) in the following way
\begin{equation}
\xi_{01}=\min_{\beta\in(0,1]} \xi_{01}(\beta),\label{eq:feas02per}
\end{equation}
where
\begin{eqnarray}
\xi_{01}(\beta)=\min_{\x} \max_{\lambda\geq 0} & &  \kappa\lambda^T\1- \lambda^TH\x \nonumber \\
\mbox{subject to} & & \|\lambda\|_2= 1\nonumber \\
& & \x_i\in\left \{0,\frac{1}{\sqrt{n}}\right \},1\leq i\leq n \nonumber \\
& & \|\x\|_2^2=\beta .\label{eq:feas01}
\end{eqnarray}
Moreover, one can scale down everything to obtain a redefined $\xi_{01}(\beta)$
\begin{eqnarray}
\xi_{01}(\beta)=\min_{\x} \max_{\lambda\geq 0} & &  \frac{\kappa}{\sqrt{\beta}}\lambda^T\1- \lambda^TH\x \nonumber \\
\mbox{subject to} & & \|\lambda\|_2= 1\nonumber \\
& & \x_i\in\left \{0,\frac{1}{\sqrt{\beta n}}\right \},1\leq i\leq n \nonumber \\
& & \|\x\|_2^2=1.\label{eq:feas01}
\end{eqnarray}
So, the strategy will be to probabilistically analyze $\xi_{01}(\beta)$ for a fixed $\beta$ and then find the $\beta$ that makes $\xi_{01}(\beta)$ the smallest possible (of course, $\xi_{01}(\beta)$ is random and one can't really be talking about it as the smallest possible; what we really mean is: given its concentrating behavior, one should find the smallest concentrating point for $\xi_{01}(\beta)$ over all $\beta$'s from $(0,1]$).

\subsection{Probabilistic analysis}
\label{sec:probanalrig01per}

In this section we will present a probabilistic analysis of the above optimization problems given in (\ref{eq:feas01}) and ultimately of the one given in (\ref{eq:feas02per}). In a nutshell, we will provide a relation between $\kappa$ and $\alpha=\frac{m}{n}$ so that with overwhelming probability over $H$ $\xi_{01}>0$. This will, of course, based on the above discussion then be enough to conclude that the problem in (\ref{eq:feas01per}) is infeasible with overwhelming probability when $\kappa$ and $\alpha=\frac{m}{n}$ satisfy such a relation.

As mentioned earlier, we will follow the analysis of the previous section. To that end we start by making use of Theorem \ref{thm:Gordonmesh1} through the following lemma (essentially an analogue to Lemma \ref{lemma:negproblemma}; the lemma is of course an easy consequence of Theorem \ref{thm:Gordonmesh1} and in fact is fairly similar to Lemma 3.1 in \cite{Gordon88}; see also \cite{StojnicHopBnds10,StojnicGardGen13} for similar considerations).
\begin{lemma}
Let $H$ be an $m\times n$ matrix with i.i.d. standard normal components. Let $\g$ and $\h$ be $m\times 1$ and $n\times 1$ vectors, respectively, with i.i.d. standard normal components. Also, let $g$ be a standard normal random variable and let $\zeta_{\lambda}$ be a function of $\x$. Then
\begin{equation}
\hspace{-.3in}P(\min_{\x_i\in\left \{0,\frac{1}{\sqrt{\beta n}}\right \},\|\x\|_2^2=1}\max_{\|\lambda\|_2=1,\lambda_i\geq 0}(-\lambda^TH\x+g-\zeta_{\lambda})\geq 0)\\\geq
P(\min_{\x_i\in\left \{0,\frac{1}{\sqrt{\beta n}}\right \},\|\x\|_2^2=1}\max_{\|\lambda\|_2=1,\lambda_i\geq 0}(\g^T\lambda+\h^T\x-\zeta_{\lambda})\geq 0).\label{eq:negproblemma01per}
\end{equation}\label{lemma:negproblemma01per}
\end{lemma}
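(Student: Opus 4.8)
The plan is to derive Lemma~\ref{lemma:negproblemma01per} from Gordon's comparison inequality (Theorem~\ref{thm:Gordonmesh1}) in exactly the same way Lemma~\ref{lemma:negproblemma} was obtained; the only new point is to check that replacing the feasible set for $\x$ by the discrete set $\{\x:\x_i\in\{0,\tfrac{1}{\sqrt{\beta n}}\},\ \|\x\|_2^2=1\}$ does not disturb the covariance comparison. First I would introduce the two centered Gaussian processes indexed by the pair $(\x,\lambda)$, with $\x$ ranging over this feasible set and $\lambda$ over $\{\lambda:\|\lambda\|_2=1,\ \lambda_i\ge0\}$:
\[
X_{\x,\lambda}=\g^T\lambda+\h^T\x,\qquad Y_{\x,\lambda}=-\lambda^TH\x+g ,
\]
with $g$ an independent standard normal whose sole role is to equalize second moments. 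The ``outer'' index over which Theorem~\ref{thm:Gordonmesh1} intersects is $\x$, and the ``inner'' index over which it takes unions is $\lambda$, since $\min_{\x}\max_{\lambda}(\,\cdot\,-\zeta_\lambda)\ge0$ is the event $\bigcap_{\x}\bigcup_{\lambda}\{\,\cdot\,\ge\zeta_\lambda\}$, and the deterministic shift $\zeta_\lambda$ merely relabels the thresholds in the statement of the theorem.

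Next I would verify the three hypotheses, using throughout that $\|\x\|_2=\|\lambda\|_2=1$ on the feasible sets. One gets $E X_{\x,\lambda}^2=\|\lambda\|_2^2+\|\x\|_2^2=2=\|\lambda\|_2^2\|\x\|_2^2+1=E Y_{\x,\lambda}^2$; for a common $\x$ and two choices $\lambda,\lambda'$, $E X_{\x,\lambda}X_{\x,\lambda'}=\lambda^T\lambda'+1=E Y_{\x,\lambda}Y_{\x,\lambda'}$, so condition~2 holds with equality; and for $\x\neq\x'$,
\[
E Y_{\x,\lambda}Y_{\x',\lambda'}-E X_{\x,\lambda}X_{\x',\lambda'}=(\lambda^T\lambda')(\x^T\x')+1-\lambda^T\lambda'-\x^T\x'=(1-\lambda^T\lambda')(1-\x^T\x')\ge0 ,
\]
the last inequality because all of $\x,\x',\lambda,\lambda'$ are unit vectors, so Cauchy--Schwarz gives $\x^T\x'\le1$ and $\lambda^T\lambda'\le1$. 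This is the crucial point: the bound $(1-\lambda^T\lambda')(1-\x^T\x')\ge0$ uses only $\|\x\|_2=1$ and is completely insensitive to the fact that the admissible $\x$ now take the values $0$ and $\tfrac{1}{\sqrt{\beta n}}$ rather than $\pm\tfrac{1}{\sqrt n}$ --- this is why ``such a difference introduces no structural changes in the proof,'' precisely as in Lemma~\ref{lemma:negproblemma}. Applying Theorem~\ref{thm:Gordonmesh1} with $Y$ in the role of the process ``$X$'' of that statement and $X$ in the role of its ``$Y$'' yields $P(\bigcap_{\x}\bigcup_{\lambda}\{Y_{\x,\lambda}\ge\zeta_\lambda\})\le P(\bigcap_{\x}\bigcup_{\lambda}\{X_{\x,\lambda}\ge\zeta_\lambda\})$, which, after undoing the event rewriting, is exactly (\ref{eq:negproblemma01per}).

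Finally, the one genuinely technical item --- and the step I expect to need the most care, though it is routine --- is that Theorem~\ref{thm:Gordonmesh1} is stated for discrete index sets while $\lambda$ ranges over a continuum (the set of feasible $\x$ is already finite for each fixed $n$). I would handle this exactly as in the proof of Lemma~3.1 in \cite{Gordon88} and of Lemma~7 in \cite{StojnicHopBnds10}: pass to finite $\epsilon$-nets of the $\lambda$-sphere, apply the theorem on the nets, and take the limit using the almost-sure continuity of both processes in $\lambda$ together with the boundedness of the index set, the events behaving monotonically under net refinement so that the limit is immediate. No new estimate is needed beyond what is already in those references, so the proof reduces to the covariance bookkeeping above plus this standard limiting argument.
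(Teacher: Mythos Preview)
Your approach is exactly the paper's: reduce to Gordon's comparison inequality (Theorem~\ref{thm:Gordonmesh1}) via the two processes $X_{\x,\lambda}=\g^T\lambda+\h^T\x$ and $Y_{\x,\lambda}=-\lambda^TH\x+g$, check the covariance hypotheses using only $\|\x\|_2=\|\lambda\|_2=1$, and note that the discrete structure of the $\x$-set is irrelevant to the factorization $(1-\lambda^T\lambda')(1-\x^T\x')\ge0$. The covariance computations and the $\epsilon$-net passage for the $\lambda$-index are correct and match the references the paper cites.

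There is, however, a bookkeeping slip in your final sentence. Your own covariance checks show that condition~2 holds with equality and condition~3 reads $E X_{\x,\lambda}X_{\x',\lambda'}\le E Y_{\x,\lambda}Y_{\x',\lambda'}$, so your $X$ already plays the role of Gordon's $X$ and your $Y$ the role of Gordon's $Y$---no swap. The theorem then gives
\[
P\Bigl(\bigcap_{\x}\bigcup_{\lambda}\{X_{\x,\lambda}\ge\zeta_\lambda\}\Bigr)\le P\Bigl(\bigcap_{\x}\bigcup_{\lambda}\{Y_{\x,\lambda}\ge\zeta_\lambda\}\Bigr),
\]
which is (\ref{eq:negproblemma01per}) in the correct direction. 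As you wrote it (with the roles swapped and the inequality reversed), condition~3 would fail, since you proved $EY\ge EX$ for $\x\neq\x'$, not the other way. Fix that one sentence and the argument is complete and identical to the paper's.
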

\begin{proof}
The comment given in the proof of Lemma \ref{lemma:negproblemma} applies here as well. The difference is basically fairly minimal.
\end{proof}

Let $\zeta_{\lambda}=-\frac{\kappa}{\sqrt{\beta}}\lambda^T\1+\epsilon_{5}^{(g)}\sqrt{n}+\xi_{01}^{(l)}(\beta)$ with $\epsilon_{5}^{(g)}>0$ being an arbitrarily small constant independent of $n$. We will first look at the right-hand side of the inequality in (\ref{eq:negproblemma01per}). The following is then the probability of interest
\begin{equation}
P\left (\min_{\x_i\in\left \{0,\frac{1}{\sqrt{\beta n}}\right \},\|\x\|_2^2=1}\max_{\|\lambda\|_2=1,\lambda_i\geq 0}\left (\g^T\lambda+\h^T\x+\frac{\kappa}{\sqrt{\beta}}\lambda^T\1-\epsilon_{5}^{(g)}\sqrt{n}\right )\geq \xi_{01}^{(l)}(\beta)\right ).\label{eq:negprobanal001per}
\end{equation}
After solving the minimization over $\x$ one obtains
\begin{multline}
P\left (\min_{\x_i\in\left \{0,\frac{1}{\sqrt{\beta n}}\right \},\|\x\|_2^2=1}\max_{\|\lambda\|_2=1,\lambda_i\geq 0}\left (\g^T\lambda+\h^T\x+\frac{\kappa}{\sqrt{\beta}}\lambda^T\1-\epsilon_{5}^{(g)}\sqrt{n} \right )\geq \xi_{01}^{(l)}(\beta)\right )\\=P\left ( \|\left (\g+\frac{\kappa}{\sqrt{\beta}}\1\right )_+ \|_2-\frac{1}{\beta}\sum_{i=n-\beta n+1}^{n}\h_{(i)}-\epsilon_{5}^{(g)}\sqrt{n}\geq \xi_{01}^{(l)}(\beta)\right ),\label{eq:negprobanal101per}
\end{multline}
where $\left (\g+\frac{\kappa}{\sqrt{\beta}}\1\right )_+$ is $\left (\g+\frac{\kappa}{\sqrt{\beta}}\1\right )$ vector with negative components replaced by zeros and where $\h_{(i)}$ is vector with containing components of $\h$ sorted in non-decreasing order. Using the machinery of \cite{StojnicCSetam09} one then has
\begin{equation}
\lim_{n\rightarrow \infty}\frac{E\sum_{i=n-\beta n+1}^{n}\h_{(i)}}{n}=\frac{1}{\sqrt{2\pi}}e^{-(\mbox{erfinv}(2(1-\beta)-1))^2},\label{eq:hord01per}
\end{equation}
and
\begin{equation}
P\left (\sum_{i=n-\beta n+1}^{n}\h_{(i)}\leq (1+\epsilon_1^{(n)}) E\sum_{i=n-\beta n+1}^{n}\h_{(i)}\right )\geq 1-e^{-\epsilon_2^{(n)} n},\label{eq:conch01per}
\end{equation}
where $\epsilon_1^{(n)}$ is an arbitrarily small constant and $\epsilon_2^{(n)}$ is a constant possibly dependent on $\epsilon_1^{(n)}$ but independent of $n$.
Following line by line what was done in \cite{StojnicGardGen13} after equation $(13)$ one then has
\begin{multline}
P\left (\min_{\x_i\in\left \{0,\frac{1}{\sqrt{\beta n}}\right \},\|\x\|_2^2=1}\max_{\|\lambda\|_2=1,\lambda_i\geq 0}\left (\g^T\lambda+\h^T\x+\frac{\kappa}{\sqrt{\beta}}\lambda^T\1-\epsilon_{5}^{(g)}\sqrt{n}\right )\geq \xi_{01}^{(l)}(\beta)\right )\\\hspace{-.5in}\geq
(1-e^{-\epsilon_{2}^{(m)} m})(1-e^{-\epsilon_{2}^{(n)} n})
P\left ((1-\epsilon_{1}^{(m)})\sqrt{\alpha f_{gar}\left (\frac{\kappa}{\sqrt{\beta}}\right )}-(1+\epsilon_{1}^{(n)})\frac{1}{\sqrt{2\pi\beta}}e^{-(\mbox{erfinv}(2(1-\beta)-1))^2}-\epsilon_{5}^{(g)}\geq \frac{\xi_{01}^{(l)}(\beta)}{\sqrt{n}}\right ),
\label{eq:negprobanal201per}
\end{multline}
where as earlier
\begin{equation}
f_{gar}\left (\frac{\kappa}{\sqrt{\beta}}\right )=\frac{1}{\sqrt{2\pi}}\int_{-\frac{\kappa}{\sqrt{\beta}}}^{\infty}\left (\g_i+\frac{\kappa}{\sqrt{\beta}}\right )^2e^{-\frac{\g_i^2}{2}}d\g_i
=\frac{\kappa e^{-\frac{\kappa^2}{2\beta}}}{\sqrt{2\beta\pi}}+\frac{(\frac{\kappa^2}{\beta}+1)\mbox{erfc}\left ( -\frac{\kappa}{\sqrt{2\beta}}\right )}{2},\label{eq:fgarscaled}
\end{equation}
and $\epsilon_{5}^{(g)}$, $\epsilon_1^{(m)}$ are arbitrarily small positive constants and $\epsilon_2^{(m)}$ is a constant possibly dependent on $\epsilon_1^{(m)}$ and $f_{gar}(\frac{\kappa}{\sqrt{\beta}})$ but independent of $n$.
If
\begin{equation}
(1-\epsilon_{1}^{(m)})\sqrt{\alpha f_{gar}\left (\frac{\kappa}{\sqrt{\beta}}\right )}-(1+\epsilon_{1}^{(n)} )\frac{1}{\sqrt{2\pi\beta}}e^{-(\mbox{erfinv}(2(1-\beta)-1))^2}-\epsilon_{5}^{(g)}>\frac{\xi_{01}^{(l)}(\beta)}{\sqrt{n}},\label{eq:negcondxipu01per}
\end{equation}
one then has from (\ref{eq:negprobanal201per})
\begin{equation}
\lim_{n\rightarrow\infty}P\left (\min_{\x_i\in\left \{0,\frac{1}{\sqrt{\beta n}}\right \},\|\x\|_2^2=1}\max_{\|\lambda\|_2=1,\lambda_i\geq 0}\left (\g^T\lambda+\h^T\x+\frac{\kappa}{\sqrt{\beta}}\lambda^T\1-\epsilon_{5}^{(g)}\sqrt{n}\right )\geq \xi_{01}^{(l)}(\beta)\right )\geq 1.\label{eq:negprobanal301per}
\end{equation}

As in the previous section, we will also need the following simple estimate related to the left hand side of the inequality in (\ref{eq:negproblemma01per}). From (\ref{eq:negproblemma01per}) one has the following as the probability of interest
\begin{equation}
P\left (\min_{\x_i\in\left \{0,\frac{1}{\sqrt{\beta n}}\right \},\|\x\|_2^2=1}\max_{\|\lambda\|_2=1,\lambda_i\geq 0}\left (\frac{\kappa}{\sqrt{\beta}}\lambda^T\1-\lambda^TH\x+g-\epsilon_{5}^{(g)}\sqrt{n}-\xi_{01}^{(l)}(\beta)\right )\geq 0\right ).\label{eq:leftnegprobanal001per}
\end{equation}
Following again what was done in Section \ref{sec:probanalrigpm1} (and ultimately in \cite{StojnicGardGen13} between equations $(21)$ and $(24)$) one has,
assuming that (\ref{eq:negcondxipu01per}) holds,
\begin{multline}
\lim_{n\rightarrow\infty}P\left (\min_{\x_i\in\left \{0,\frac{1}{\sqrt{\beta n}}\right \},\|\x\|_2^2=1}\max_{\|\lambda\|_2=1,\lambda_i\geq 0}\left (\frac{\kappa}{\sqrt{\beta}}\lambda^T\1-\lambda^TH\x\right )\geq \xi_{01}^{(l)}(\beta)\right )\\\geq \lim_{n\rightarrow\infty}P\left (\min_{\x_i\in\left \{0,\frac{1}{\sqrt{\beta n}}\right \},\|\x\|_2^2=1}\max_{\|\lambda\|_2=1,\lambda_i\geq 0}\left (\g^T\y+\h^T\x+\frac{\kappa}{\sqrt{\beta}}\lambda^T\1-\epsilon_{5}^{(g)}\sqrt{n}\right )\geq \xi_{01}^{(l)}(\beta)\right )\geq 1.\label{eq:leftnegprobanal301per}
\end{multline}

We summarize the above results in the following theorem.

\begin{theorem}
Let $H$ be an $m\times n$ matrix with i.i.d. standard normal components. Let $n$ be large and let $m=\alpha n$, where $\alpha>0$ is a constant independent of $n$. Let $\xi_{01}$ be as in (\ref{eq:feas01per}) and let $\frac{\kappa}{\sqrt{\beta}}$ be a scalar constant independent of $n$. Let all $\epsilon$'s be arbitrarily small constants independent of $n$. Further, let $\g_i$ be a standard normal random variable and set
\begin{equation}
f_{gar}\left (\frac{\kappa}{\sqrt{\beta}}\right ) = \frac{1}{\sqrt{2\pi}}\int_{-\frac{\kappa}{\sqrt{\beta}}}^{\infty}\left (\g_i+\frac{\kappa}{\sqrt{\beta}}\right )^2e^{-\frac{\g_i^2}{2}}d\g_i
=\frac{\kappa e^{-\frac{\kappa^2}{2\beta}}}{\sqrt{2\beta\pi}}+\frac{(\frac{\kappa^2}{\beta}+1)\mbox{erfc}\left ( -\frac{\kappa}{\sqrt{2\beta}}\right )}{2}.\label{eq:thmerrc3001per}
\end{equation}

Let $\xi_{01}^{(l)}(\beta)$ be a scalar such that
\begin{equation}
(1-\epsilon_{1}^{(m)})\sqrt{\alpha f_{gar}\left (\frac{\kappa}{\sqrt{\beta}}\right )}-(1+\epsilon_{1}^{(n)} )\frac{1}{\sqrt{2\pi\beta}}e^{-(\mbox{erfinv}(2(1-\beta)-1))^2}-\epsilon_{5}^{(g)}>\frac{\xi_{01}^{(l)}(\beta)}{\sqrt{n}}.\label{eq:condxinthmstoc3001per}
\end{equation}
Then
\begin{equation}
\hspace{-.3in} \lim_{n\rightarrow\infty}P(\xi_{01}(\beta)\geq \xi_{01}^{(l)}(\beta))=\lim_{n\rightarrow\infty}P\left (\min_{\x_i\in\left \{0,\frac{1}{\sqrt{\beta n}}\right \},\|\x\|_2^2=1}\max_{\|\lambda\|_2=1,\lambda_i\geq 0}\left (\frac{\kappa}{\sqrt{\beta}}\lambda^T\1-\lambda^TH\x\right )\geq \xi_{01}^{(l)}(\beta)\right )\geq 1. \label{eq:probthmc3001per}
\end{equation}
Moreover, let $\xi_{01}^{(l)}$ be a scalar such that
\begin{equation}
\min_{\beta\in(0,1]}\left((1-\epsilon_{1}^{(m)})\sqrt{\alpha f_{gar}\left (\frac{\kappa}{\sqrt{\beta}}\right )}-(1+\epsilon_{1}^{(n)} )\frac{1}{\sqrt{2\pi\beta}}e^{-(\mbox{erfinv}(2(1-\beta)-1))^2}-\epsilon_{5}^{(g)}\right )>\min_{\beta\in(0,1]}\frac{\xi_{01}^{(l)}(\beta)}{\sqrt{n}}
=\frac{\xi_{01}^{(l)}}{\sqrt{n}}.\label{eq:condxinthmstoc3001pernobeta}
\end{equation}
Then
\begin{equation}
\lim_{n\rightarrow\infty}P(\xi_{01}\geq \xi_{01}^{(l)})=\lim_{n\rightarrow\infty}P\left (\min_{\x_i\in\left \{0,\frac{1}{\sqrt{ n}}\right \},\|\x\|_2^2=1}\max_{\|\lambda\|_2=1,\lambda_i\geq 0}\left (\frac{\kappa}{\sqrt{\beta}}\lambda^T\1-\lambda^TH\x\right )\geq \xi_{01}^{(l)}\right )\geq 1 \label{eq:probthmc3001pernobeta}
\end{equation}
and (\ref{eq:defprobucor201per}) is infeasible with overwhelming probability.
\label{thm:01perc30}
\end{theorem}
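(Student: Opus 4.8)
The plan is to mirror, almost verbatim, the argument used for Theorem~\ref{thm:pm1c30} (and before it for the spherical case), the only genuinely new ingredients being the replacement of the term $\sum_{i=1}^{n}|\h_i|$ -- which arose when minimizing $\h^T\x$ over the sphere-scaled sign vectors -- by the scaled partial sorted sum $\sum_{i=n-\beta n+1}^{n}\h_{(i)}$ that appears when minimizing $\h^T\x$ over $0/1$-scaled vectors with exactly $\beta n$ nonzero coordinates, together with the extra outer minimization over $\beta\in(0,1]$.

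First I would fix $\beta\in(0,1]$ and work with the scaled problem (\ref{eq:feas01}). Applying Lemma~\ref{lemma:negproblemma01per} with $\zeta_{\lambda}=-\frac{\kappa}{\sqrt{\beta}}\lambda^T\1+\epsilon_5^{(g)}\sqrt{n}+\xi_{01}^{(l)}(\beta)$, it suffices to lower bound the probability of the corresponding event for the decoupled process $\g^T\lambda+\h^T\x$. Solving the inner minimization over $\x$: since $\x$ has exactly $\beta n$ entries equal to $\frac{1}{\sqrt{\beta n}}$ and the rest zero, $\min_{\x}\h^T\x$ is attained by placing the support on the $\beta n$ most negative coordinates of $\h$, which, by Gaussian sign symmetry, is equal in distribution to minus the scaled sum of the $\beta n$ largest ordered entries, producing the term displayed in (\ref{eq:negprobanal101per}); simultaneously $\max_{\|\lambda\|_2=1,\lambda\geq 0}(\g^T\lambda+\frac{\kappa}{\sqrt{\beta}}\lambda^T\1)=\|(\g+\frac{\kappa}{\sqrt{\beta}}\1)_+\|_2$. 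Then I would invoke the concentration estimate $\|(\g+\frac{\kappa}{\sqrt{\beta}}\1)_+\|_2\geq(1-\epsilon_1^{(m)})\sqrt{m\,f_{gar}(\kappa/\sqrt{\beta})}$ with overwhelming probability -- exactly the step performed after equation $(13)$ in \cite{StojnicGardGen13}, with $\kappa$ replaced by $\kappa/\sqrt{\beta}$ -- and, for the sorted-sum term, the order-statistics machinery of \cite{StojnicCSetam09}, which supplies the per-coordinate mean (\ref{eq:hord01per}) and the one-sided concentration (\ref{eq:conch01per}). Combining these yields (\ref{eq:negprobanal201per}), and hence (\ref{eq:negprobanal301per}) whenever (\ref{eq:negcondxipu01per}) holds. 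The left-hand side of the Gordon inequality is handled exactly as in Section~\ref{sec:probanalrigpm1}: once (\ref{eq:negcondxipu01per}) holds the decoupled quantity concentrates above $\xi_{01}^{(l)}(\beta)$ with overwhelming probability, and this transfers to the original $\min_{\x}\max_{\lambda}(\frac{\kappa}{\sqrt{\beta}}\lambda^T\1-\lambda^TH\x)$, giving (\ref{eq:leftnegprobanal301per}). This establishes the first half of the theorem, namely (\ref{eq:probthmc3001per}).

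For the second half I would discretize $\beta$ over a finite grid of $(0,1]$ of $n$-independent mesh -- legitimate because $\xi_{01}(\beta)$ is continuous in $\beta$ while all the concentration statements hold (uniformly) at the finitely many grid points, so a union bound over the grid costs nothing asymptotically, and the discretization error can be absorbed into the $\epsilon$'s. On the grid $\xi_{01}=\min_{\beta}\xi_{01}(\beta)$, and intersecting the overwhelming-probability events of the first part over all grid points gives $\xi_{01}\geq\min_{\beta}\xi_{01}^{(l)}(\beta)=\xi_{01}^{(l)}$ with overwhelming probability, which is (\ref{eq:probthmc3001pernobeta}). Finally, choosing $\xi_{01}^{(l)}$ so that the left-hand side of (\ref{eq:condxinthmstoc3001pernobeta}) is strictly positive -- possible precisely when $\alpha$ satisfies that strict inequality for the worst (minimizing) $\beta\in(0,1]$ -- forces $\xi_{01}>0$ with overwhelming probability, i.e. (\ref{eq:defprobucor201per}) is infeasible with overwhelming probability.

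I expect the only real friction point to be the sorted-sum term $\sum_{i=n-\beta n+1}^{n}\h_{(i)}$: one must confirm that its expectation per coordinate converges to $\frac{1}{\sqrt{2\pi}}e^{-(\mbox{erfinv}(2(1-\beta)-1))^2}$ (an order-statistics/tail computation) and that it concentrates one-sidedly with exponentially small deviation probability, uniformly over the chosen $\beta$-grid; everything else is a transcription of the already-established $\pm 1$ and spherical arguments with the substitution $\kappa\mapsto\kappa/\sqrt{\beta}$ and a final worst-casing over $\beta$. A secondary, bookkeeping-type issue is ordering the limits in the various $\epsilon$'s and the grid refinement so as not to disturb the ``overwhelming probability'' accounting.
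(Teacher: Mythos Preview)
Your proposal is correct and follows essentially the same route as the paper's own argument: apply Lemma~\ref{lemma:negproblemma01per} with the stated $\zeta_\lambda$, solve the decoupled $\min_{\x}$ and $\max_{\lambda}$ to obtain the sorted-sum and the $\|(\g+\frac{\kappa}{\sqrt{\beta}}\1)_+\|_2$ terms, invoke the concentration from \cite{StojnicGardGen13} and the order-statistics estimates (\ref{eq:hord01per})--(\ref{eq:conch01per}) from \cite{StojnicCSetam09}, then discretize and union-bound over $\beta$ exactly as the paper outlines in the discussion right after (\ref{eq:defprobucor201per}). The friction points you flag (mean and concentration of the sorted partial sum, and the $\epsilon$/grid bookkeeping) are precisely the ones the paper dispatches by citation rather than by explicit computation.
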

\begin{proof}
Follows from the above discussion, comments right after (\ref{eq:defprobucor201per}), and the analysis presented in \cite{StojnicGardGen13}.
\end{proof}

In a more informal language (as earlier, essentially ignoring all technicalities and $\epsilon$'s) one has that as long as
\begin{equation}
\alpha>\max_{\beta\in(0,1]}\left (\frac{e^{-2(\mbox{erfinv}(2(1-\beta)-1))^2}}{2\pi\beta f_{gar}\left (\frac{\kappa}{\sqrt{\beta}}\right )}\right ),\label{eq:condalph01per}
\end{equation}
the problem in (\ref{eq:defprobucor201per}) will be infeasible with overwhelming probability. It is an easy exercise to show that the above is exactly the prediction for the storage capacity given in \cite{GutSte90} for $0/1$ perceptron. This basically establishes the prediction obtained based on the replica symmetry approach of statistical mechanics as a rigorous upper bound the true value of the storage capacity of $0/1$ preceptron.

The results obtained based on the above theorem (as well as those predicted assuming replica symmetry and given in \cite{GutSte90}) are presented in Figure \ref{fig:01perc30}. For the values of $\alpha$ that are to the right of the given curve the memory will not operate correctly with overwhelming probability. This of course follows from the fact that with overwhelming probability over $H$ the inequalities in (\ref{eq:defprobucor201per}) will not be simultaneously satisfiable.
\begin{figure}[htb]
\centering
\centerline{\epsfig{figure=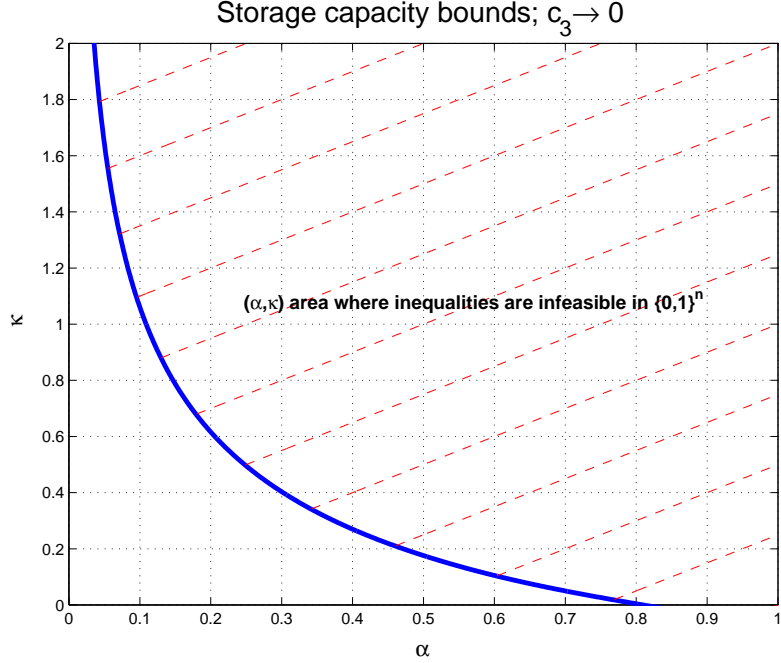,width=10.5cm,height=9cm}}
\caption{$\frac{\kappa}{\sqrt{\beta}}$ as a function of $\alpha$; $\x\in\left \{0,\frac{1}{\sqrt{n}}\right \}^n$}
\label{fig:01perc30}
\end{figure}

We should also mention that one can employ the technique similar to the one presented in Section \ref{sec:discperlow} to attempt to lower the upper bounds presented in Figure \ref{fig:01perc30}. However, since we haven't found a substantial improvement over the results already presented in Figure \ref{fig:01perc30} we skip presenting results in that direction and instead present a simple combinatorial upper bound that can be obtained following the approach presented in Section \ref{sec:rigresdiscsimpcomb}.

\subsection{Simple combinatorial bound -- $0/1$ perceptron}
\label{sec:rigres01simpcomb}

In this section we will briefly sketch how one can obtain results for $0/1$ perceptron that are similar to those presented in Section \ref{sec:rigresdiscsimpcomb} for $\pm 1$ perceptron.

As in Section \ref{sec:rigresdiscsimpcomb} one starts by looking at how likely is that each of the inequalities in (\ref{eq:defprobucor201per}) is satisfied. Similarly to what we did in the previous subsection we will fix a $\beta\in [0,1]$ and consider $\x_i\in\left \{0,\frac{1}{\sqrt{\beta n}}\right \}$ such that $\|\x\|_2=1$. Following what was done in Section \ref{sec:rigresdiscsimpcomb} one then has
\begin{equation}
P\left (H_{i,:}\x\geq \frac{\kappa}{\sqrt{\beta}}|\|\x\|_2=1,\x_i\in\left \{0,\frac{1}{\sqrt{\beta n}}\right \}\right )=P(g\geq \frac{\kappa}{\sqrt{\beta}})=\frac{1}{2}\mbox{erfc}(\frac{\kappa}{\sqrt{2\beta}}),1\leq i\leq m.\label{eq:simpcomb101per}
\end{equation}
After accounting for all the inequalities in (\ref{eq:defprobucor201per}) (essentially all the rows of $H$) one then further has
\begin{equation}
P\left (H\x\geq \frac{\kappa}{\sqrt{\beta}}|\|\x\|_2=1,\x_i\in\left \{0,\frac{1}{\sqrt{\beta n}}\right \}\right )=\left (P\left (H_{i,:}\x\geq \frac{\kappa}{\sqrt{\beta}}|\|\x\|_2=1,\x_i\in\left \{0,\frac{1}{\sqrt{\beta n}}\right \}\right )\right )^m.\label{eq:simpcomb201per}
\end{equation}
Using the union bound over all $\x$ then gives
\begin{eqnarray}
\hspace{-.4in}P\left (\exists \x|H\x\geq \frac{\kappa}{\sqrt{\beta}},\|\x\|_2=1,\x_i\in\left \{0,\frac{1}{\sqrt{\beta n}}\right \}\right )& \leq &
e^{h(\beta)n}P\left (H\x\geq \frac{\kappa}{\sqrt{\beta}}|\|\x\|_2=1,\x_i\in\left \{0,\frac{1}{\sqrt{\beta n}}\right \}\right ) \nonumber \\& = &
e^{h(\beta)n}\left (P\left (H_{i,:}\x\geq \frac{\kappa}{\sqrt{\beta}}|\|\x\|_2=1,\x_i\in\left \{0,\frac{1}{\sqrt{\beta n}}\right \}\right )\right )^m,\nonumber \\\label{eq:simpcomb301per}
\end{eqnarray}
where $h()$ is the entropy function of basis $e$, i.e.
\begin{equation}
h(\beta)=\beta\log(\beta)+(1-\beta)\log(1-\beta).\label{eq:defent}
\end{equation}
A combination of (\ref{eq:simpcomb101per}) and (\ref{eq:simpcomb301per}) then gives
\begin{equation}
P\left (\exists \x|H\x\geq \frac{\kappa}{\sqrt{\beta}},\|\x\|_2=1,\x_i\in\left \{0,\frac{1}{\sqrt{\beta n}}\right \}\right )\leq e^{-h(\beta)n}\left (\frac{1}{2}\mbox{erfc}(\frac{\kappa}{\sqrt{2\beta}})\right )^m.\label{eq:simpcomb401per}
\end{equation}
After discretizing over $\beta$ and union bounding one from (\ref{eq:simpcomb401per}) then has that if $\alpha=\frac{m}{n}$ is such that
\begin{equation}
\alpha> \max_{\beta\in(0,1]}\frac{h(\beta)}{\log\left (\frac{1}{2}\mbox{erfc}(\frac{\kappa}{\sqrt{2\beta}})\right )},\label{eq:simpcomb501per}
\end{equation}
then
\begin{equation}
\lim_{n\rightarrow\infty}P\left (\exists \x|H\x\geq \kappa,\x_i\in\left \{0,\frac{1}{\sqrt{n}}\right \}\right )\leq \lim_{n\rightarrow\infty}
\max_{\beta\in(0,1]}\left (e^{-h(\beta)n}\left (\frac{1}{2}\mbox{erfc}(\frac{\kappa}{\sqrt{2\beta}})\right )^m\right )=0.\label{eq:simpcomb601per}
\end{equation}
The upper bounds on the storage capacity one can obtain based on the above consideration (in particular based on (\ref{eq:simpcomb501per})) are presented in Figure \ref{fig:01persimpcomb}. Similarly to what we mentioned when we studied $\pm 1$ perceptrons, while these bounds can be improved, our goal is more to recall on the results that relate to the ones that we present in this paper rather than on the best possible ones.
\begin{figure}[htb]
\centering
\centerline{\epsfig{figure=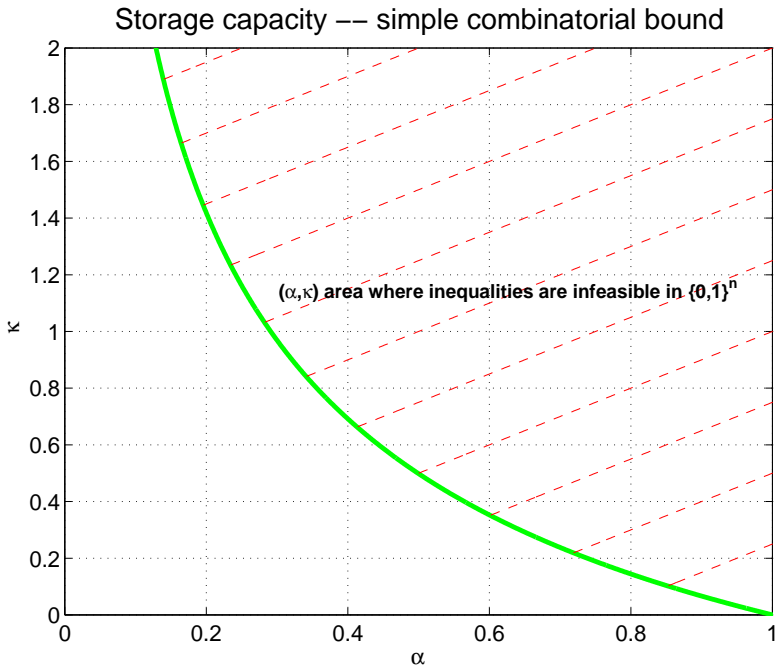,width=10.5cm,height=9cm}}
\caption{$\kappa$ as a function of $\alpha$; simple combinatorial bound; $\x\in\left \{0,\frac{1}{\sqrt{n}}\right \}^n$}
\label{fig:01persimpcomb}
\end{figure}

Also, in Figure \ref{fig:01peroptc3} we present the above simple combinatorial bounds together with the results obtained in the previous subsection. Differently from what was the case when we studied $\pm 1$ perceptron, here the simple combinatorial bound does not seem to improve over the results we presented in Section \ref{sec:probanalrig01per} (at least not in the range of $\kappa$'s that we considered). We also indicate in Figure \ref{fig:01peroptc3} that even if one is to employ the strategy from Section \ref{sec:discperlow} the optimal corresponding $c_3^{(s)}$ would turn out to be the one converging to zero.
\begin{figure}[htb]
\centering
\centerline{\epsfig{figure=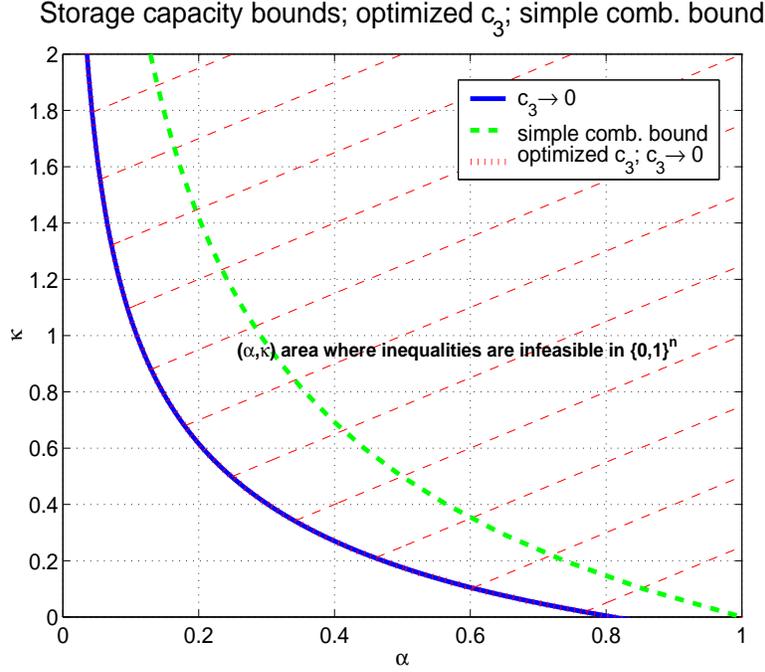,width=10.5cm,height=9cm}}
\caption{$\kappa$ as a function of $\alpha$; optimized $c_3^{(s)}$; $\x\in\left \{0,\frac{1}{\sqrt{n}}\right \}^n$}
\label{fig:01peroptc3}
\end{figure}
We should also mention that (as was the case for the $\pm 1$ perceptron) the predictions based on the zero entropy obtained in \cite{GutSte90} are still substantially lower than the ones presented in Figure \ref{fig:01peroptc3}. For example, for $\kappa=0$ the above results guarantee that $\alpha\leq 0.809$ (which is the same what the replica symmetry theory predicts) whereas the zero entropy calculations of \cite{GutSte90} predict $\alpha\approx 0.59$.

As we have mentioned in the introduction various types of discrete perceptrons are possible. Above we chose the two fairly typical ones: the $\pm 1$ and the $0/1$ perceptron. Many others have been discussed/analyzed throughout the vast perceptron literature , see, e.g. \cite{GutSte90}. Among them are more general versions of $\pm 1$ such as the one where $\x_i\in\left \{\pm\frac{L}{\sqrt{n}},\pm\frac{L-1}{\sqrt{n}},\dots,\pm\frac{1}{\sqrt{n}} \right \}$ or its a slight alternation
where $\x_i\in\left \{\pm\frac{L}{\sqrt{n}},\pm\frac{L-1}{\sqrt{n}},\dots,\pm\frac{1}{\sqrt{n}},0 \right \}$. These are referred to as the digital perceptrons in \cite{GutSte90}. The strategies designed above can easily be adapted to handle these cases as well. However, as we have mentioned earlier, to preserve the elegance of the exposition, we chose only two particular cases to demonstrate how the concepts work and left the remaining scenarios for a more technical presentation. However, we also chose one extra case that goes on top of those mentioned above. Such a case is essentially a limiting case of digital perceptrons obtained in the limit of large $L$. Basically, as $L$ grows the digital perceptrons should converge to the so-called box-constrained perceptrons where $\x_i\in [-1,1]$. An interesting phenomenon happens in the analysis of such perceptrons and that is of course the reason why we selected it. We will present the results related to the box-constrained perceptrons in the following section.

\section{Box-constrained perceptrons}
\label{sec:boxper}

As mentioned above, in this section we look at the box-constrained perceptrons. To make the presentation as easy to follow as possible we will again try to emulate the exposition of Sections \ref{sec:discper} and \ref{sec:01per} as much as possible. Following what was done in Sections \ref{sec:discper} and \ref{sec:01per} it is not that hard to recognize that the storage capacity of box-constrained perceptron can be considered through the following feasibility problem
\begin{eqnarray}
& & H\x\geq \kappa\nonumber \\
& & \x_i\in \left [-\frac{1}{\sqrt{n}},\frac{1}{\sqrt{n}}\right ],1\leq i\leq n.\label{eq:defprobucor2boxper}
\end{eqnarray}
As in Sections \ref{sec:discper} and \ref{sec:01per} to ease the exposition we will continue assume that the elements of $H$ are i.i.d. standard normals and that the dimension of $H$ is $m\times n$. Moreover, we will continue to work in the linear regime, i.e. we will continue to assume that $m=\alpha n$ where $\alpha$ is a constant independent of $n$. Now, if all inequalities in (\ref{eq:defprobucor2boxper}) are satisfied one can have that the perceptron dynamics discussed in Section \ref{sec:mathsetupper} will be stable and all $m$ patterns could be successfully stored.

As was the case in Section \ref{sec:01per}, before proceeding further with following the exposition of the previous section, we will first discretize the problem a bit. It is not that hard to see that the set of all allowed $\x$ in (\ref{eq:defprobucor2boxper}) comes from a collection (basically a union) of disjoint sets ${\cal X}_{\beta}^{(box)},0< \beta\leq 1$, where
\begin{equation}
{\cal X}_l^{(box)}=\left \{\x|\x_i\in\left [-\frac{1}{\sqrt{n}},\frac{1}{\sqrt{n}}\right ],1\leq i\leq n,\|\x\|_2^2=\beta,\beta\in(0,1]\right \}.\label{eq:defcalXbox}
\end{equation}
As we discussed in the previous section, one would need to discretize over $\beta$ (as mentioned in the previous section, that is a fairly straightforward and we skip it). Since all quantities that we will consider below will concentrate around its mean values with overwhelming probability the union bounding over a bounded (independent of $n$) discrerized $\beta$ will affect the final results in no way. Given all of that the strategy will be similar to the one from the previous section. Basically, we will consider the feasibility of (\ref{eq:defprobucor2boxper}) for a fixed $\beta$ and then find optimize to find the best/worst one. this is again absolutely not necessary. As in the previous section, our entire exposition that will follow can be easily pushed through even with a variable $\beta$. However, in our view it unnecessarily complicates writings and we find the exposition way more clearer if we again fix $\beta$ at the beginning and don't drag it as a variable inside all the derivations that will follow.

Going back to (\ref{eq:defprobucor2boxper}) one can rewrite it as the following problem
\begin{eqnarray}
\xi_{box}=\min_{\x} \max_{\lambda\geq 0} & &  \kappa\lambda^T\1- \lambda^TH\x \nonumber \\
\mbox{subject to} & & \|\lambda\|_2= 1\nonumber \\
& & \x_i\in\left [-\frac{1}{\sqrt{n}},\frac{1}{\sqrt{n}}\right ],1\leq i\leq n.\label{eq:feasboxper}
\end{eqnarray}
As earlier, the critical component of the analysis that will follow will be the sign of $\xi_{box}$. Obviously, the sign of $\xi_{box}$ determines the feasibility of (\ref{eq:defprobucor2boxper}). In particular, if $\xi_{box}>0$ then (\ref{eq:defprobucor2boxper}) is infeasible and if $\xi_{box}\geq 0$ then (\ref{eq:defprobucor2boxper}) is feasible. One can then ask the following analogue to the probabilistic questions asked in Sections \ref{sec:discper} and \ref{sec:01per}: how small can $m$ be so that $\xi_{box}$ in (\ref{eq:feasboxper}) is positive and (\ref{eq:defprobucor2boxper}) is infeasible with overwhelming probability? And, how large can $m$ be so that $\xi_{box}$ in (\ref{eq:feasboxper}) is negative and (\ref{eq:defprobucor2boxper}) is feasible with overwhelming probability? (As usual, we recall that the overwhelming probability is over the randomness of $H$). Below  we provide the exact answers to these questions.

Before proceeding further we will need a few technical details setup. They relate to the concretizing the above mentioned dealing with $\beta$. We will do so by rewriting (\ref{eq:feasboxper}) in the following way
\begin{equation}
\xi_{box}=\min_{\beta\in(0,1]} \xi_{box}(\beta),\label{eq:feasboxper1}
\end{equation}
where
\begin{eqnarray}
\xi_{box}(\beta)=\min_{\x} \max_{\lambda\geq 0} & &  \kappa\lambda^T\1- \lambda^TH\x \nonumber \\
\mbox{subject to} & & \|\lambda\|_2= 1\nonumber \\
& & \x_i\in\left [-\frac{1}{\sqrt{n}},\frac{1}{\sqrt{n}}\right ],1\leq i\leq n \nonumber \\
& & \|\x\|_2^2=\beta .\label{eq:feasbox}
\end{eqnarray}
Following further what was done in the previous section, one can scale down everything to obtain a redefined $\xi_{box}(\beta)$
\begin{eqnarray}
\xi_{box}(\beta)=\min_{\x} \max_{\lambda\geq 0} & &  \frac{\kappa}{\sqrt{\beta}}\lambda^T\1- \lambda^TH\x \nonumber \\
\mbox{subject to} & & \|\lambda\|_2= 1\nonumber \\
& & \x_i\in\left [-\frac{1}{\sqrt{\beta n}},\frac{1}{\sqrt{\beta n}}\right ],1\leq i\leq n \nonumber \\
& & \|\x\|_2^2=1.\label{eq:feasbox1}
\end{eqnarray}
The above mentioned strategy will be then boil down to a probabilistic analysis of $\xi_{box}(\beta)$ for a fixed $\beta$. Then we will try to find the $\beta$ that makes $\xi_{box}(\beta)$ the smallest possible (as in the previous section, what we mean is: given its concentrating behavior, we will try to find the smallest concentrating points for $\xi_{box}(\beta)$ over all $\beta$'s from $(0,1]$).

\subsection{Probabilistic analysis}
\label{sec:probanalrigboxper}

In this section we will present a probabilistic analysis of the above optimization problems given in (\ref{eq:feasbox}) and ultimately of the one given in (\ref{eq:feasboxper}). In a nutshell, in the first part below we will provide a relation between $\kappa$ and $\alpha=\frac{m}{n}$ so that with overwhelming probability over $H$ $\xi_{box}>0$. This will, of course, based on the above discussion then be enough to conclude that the problem in (\ref{eq:feasboxper}) is infeasible with overwhelming probability when $\kappa$ and $\alpha=\frac{m}{n}$ satisfy such a relation. In the second part we will then provide a relation between $\kappa$ and $\alpha=\frac{m}{n}$ so that with overwhelming probability over $H$ $\xi_{box}\geq0$. This will then be enough to conclude that the problem in (\ref{eq:feasboxper}) is feasible with overwhelming probability when $\kappa$ and $\alpha=\frac{m}{n}$ satisfy such a relation. Moreover, the two relation between $\kappa$ and $\alpha=\frac{m}{n}$ will pretty much match each other.

\subsubsection{Lower-bounding $\xi_{box}$}
\label{sec:xiboxlb}

We will again to a degree follow the analysis of the previous sections. We start with the following analogue to Lemmas \ref{lemma:negproblemma} and \ref{lemma:negproblemma01per} (the lemma is of course an easy consequence of Theorem \ref{thm:Gordonmesh1} and in fact is fairly similar to Lemma 3.1 in \cite{Gordon88}).
\begin{lemma}
Let $H$ be an $m\times n$ matrix with i.i.d. standard normal components. Let $\g$ and $\h$ be $m\times 1$ and $n\times 1$ vectors, respectively, with i.i.d. standard normal components. Also, let $g$ be a standard normal random variable and let $\zeta_{\lambda}$ be a function of $\x$. Then
\begin{multline}
P\min_{\x_i\in\left [-\frac{1}{\sqrt{\beta n}},\frac{1}{\sqrt{\beta n}}\right ],\|\x\|_2^2=1}\max_{\|\lambda\|_2=1,\lambda_i\geq 0} (-\lambda^TH\x+g-\zeta_{\lambda} )\geq 0 )\\\geq
P (\min_{\x_i\in\left [-\frac{1}{\sqrt{\beta n}},\frac{1}{\sqrt{\beta n}}\right ],\|\x\|_2^2=1}\max_{\|\lambda\|_2=1,\lambda_i\geq 0} (\g^T\lambda+\h^T\x-\zeta_{\lambda})\geq 0 ).\label{eq:negproblemmaboxper}
\end{multline}\label{lemma:negproblemmaboxper}
\end{lemma}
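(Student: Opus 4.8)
The plan is to obtain (\ref{eq:negproblemmaboxper}) as a direct corollary of Gordon's comparison inequality, Theorem~\ref{thm:Gordonmesh1}, in exactly the way Lemmas~\ref{lemma:negproblemma} and \ref{lemma:negproblemma01per} were obtained. The point to stress at the outset is that Theorem~\ref{thm:Gordonmesh1} places no constraint on \emph{which} index set the two processes are indexed by; hence the fact that the feasible region for $\x$ is now the box-intersect-sphere set $\{\x:\x_i\in[-1/\sqrt{\beta n},1/\sqrt{\beta n}],\ \|\x\|_2^2=1\}$ rather than the $\pm 1/\sqrt n$ set or the $0/1$ set used in the earlier lemmas is immaterial.

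Concretely, I would fix the deterministic thresholds $\zeta_\lambda$ and introduce the two centered Gaussian processes indexed by pairs $(\x,\lambda)$, with $\x$ in the set above and $\lambda\in\{\lambda:\|\lambda\|_2=1,\ \lambda_i\geq 0\}$,
\begin{equation*}
X_{\x,\lambda}=\g^T\lambda+\h^T\x,\qquad Y_{\x,\lambda}=-\lambda^TH\x+g,
\end{equation*}
where $H,\g,\h,g$ are mutually independent with i.i.d.\ standard normal entries. The only real content is verifying conditions 1--3 of Theorem~\ref{thm:Gordonmesh1}, and here all the relevant norms $\|\x\|_2,\|\x'\|_2,\|\lambda\|_2,\|\lambda'\|_2$ equal $1$ on the index set, so: $E(X_{\x,\lambda}^2)=E(Y_{\x,\lambda}^2)=2$; for a common $\x$, $E(X_{\x,\lambda}X_{\x,\lambda'})=\lambda^T\lambda'+1=E(Y_{\x,\lambda}Y_{\x,\lambda'})$; and for $\x\neq\x'$,
\begin{equation*}
E(Y_{\x,\lambda}Y_{\x',\lambda'})-E(X_{\x,\lambda}X_{\x',\lambda'})=\big(1-\lambda^T\lambda'\big)\big(1-\x^T\x'\big)\geq 0,
\end{equation*}
the nonnegativity being Cauchy--Schwarz on the unit vectors. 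This gives conditions 1--3 (with equality in the first two), and the box restriction on $\x$ is never used.

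Applying Theorem~\ref{thm:Gordonmesh1} with its thresholds set to $\zeta_\lambda$ then yields $P(\bigcap_{\x}\bigcup_{\lambda}\{X_{\x,\lambda}\geq\zeta_\lambda\})\leq P(\bigcap_{\x}\bigcup_{\lambda}\{Y_{\x,\lambda}\geq\zeta_\lambda\})$, and rereading each event via $\bigcap_{\x}\bigcup_{\lambda}\{\,\cdot\,\geq\zeta_\lambda\}=\{\min_{\x}\max_{\lambda}(\,\cdot\,-\zeta_\lambda)\geq 0\}$ turns this into precisely (\ref{eq:negproblemmaboxper}). The one place where a little care is genuinely needed — and the only step that is not completely mechanical — is that Theorem~\ref{thm:Gordonmesh1} is stated for finite index sets whereas $\x$ and $\lambda$ range here over compact continua; I would dispatch this by the standard device of passing to a finite $\epsilon$-net, applying the finite statement, and letting the mesh shrink using a.s.\ continuity of both processes in $(\x,\lambda)$ (cf.\ the corresponding remarks in \cite{Gordon88,StojnicHopBnds10}). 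As emphasized above, the box constraint plays no role in any of this, which is why the proof is identical in structure to those of Lemmas~\ref{lemma:negproblemma} and \ref{lemma:negproblemma01per}.
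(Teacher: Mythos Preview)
Your proposal is correct and is exactly the approach the paper has in mind: the paper's own proof simply says that the argument of Lemma~\ref{lemma:negproblemma} (itself deferred to Lemma~3.1 of \cite{Gordon88} and Lemma~7 of \cite{StojnicHopBnds10}) carries over verbatim because only the feasible set for $\x$ changes, which plays no role in the covariance comparison. Your explicit verification of conditions 1--3 of Theorem~\ref{thm:Gordonmesh1} and the $\epsilon$-net passage to continuous index sets are precisely the details those references contain.
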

\begin{proof}
The comment given in the proof of Lemma \ref{lemma:negproblemma} applies here as well. The difference is again basically fairly minimal.
\end{proof}

Let $\zeta_{\lambda}=-\frac{\kappa}{\sqrt{\beta}}\lambda^T\1+\epsilon_{5}^{(g)}\sqrt{n}+\xi_{box}^{(l)}(\beta)$ with $\epsilon_{5}^{(g)}>0$ being an arbitrarily small constant independent of $n$. We will first look at the right-hand side of the inequality in (\ref{eq:negproblemmaboxper}). The following is then the probability of interest
\begin{equation}
P\left (\min_{\x_i\in\left [-\frac{1}{\sqrt{\beta n}},\frac{1}{\sqrt{\beta n}}\right ],\|\x\|_2^2=1}\max_{\|\lambda\|_2=1,\lambda_i\geq 0}\left (\g^T\lambda+\h^T\x+\frac{\kappa}{\sqrt{\beta}}\lambda^T\1-\epsilon_{5}^{(g)}\sqrt{n}\right )\geq \xi_{box}^{(l)}(\beta)\right ).\label{eq:negprobanal001per}
\end{equation}
After solving the maximization over $\lambda$ one obtains
\begin{multline}
P\left (\min_{\x_i\in\left [-\frac{1}{\sqrt{\beta n}},\frac{1}{\sqrt{\beta n}}\right ],\|\x\|_2^2=1}\max_{\|\lambda\|_2=1,\lambda_i\geq 0}\left (\g^T\lambda+\h^T\x+\frac{\kappa}{\sqrt{\beta}}\lambda^T\1-\epsilon_{5}^{(g)}\sqrt{n} \right )\geq \xi_{box}^{(l)}(\beta)\right )\\=P\left ( \|\left (\g+\frac{\kappa}{\sqrt{\beta}}\1\right )_+ \|_2+\min_{\x_i\in\left [-\frac{1}{\sqrt{\beta n}},\frac{1}{\sqrt{\beta n}}\right ],\|\x\|_2^2=1}\h^T\x-\epsilon_{5}^{(g)}\sqrt{n}\geq \xi_{box}^{(l)}(\beta)\right ),\label{eq:negprobanal101per}
\end{multline}
where $\left (\g+\frac{\kappa}{\sqrt{\beta}}\1\right )_+$ is $\left (\g+\frac{\kappa}{\sqrt{\beta}}\1\right )$ vector with negative components replaced by zeros and where
\begin{equation}
f_{box}^{(r)}(\h,\beta)=\min_{\x_i\in\left [-\frac{1}{\sqrt{\beta n}},\frac{1}{\sqrt{\beta n}}\right ],\|\x\|_2^2=1}\h^T\x.\label{eq:defrfbox}
\end{equation}
Due to the linearity of the objective function in the definition of $f_{box}^{(r)}(\h)$ and the fact that $\h$ is a vector of $n$ i.i.d. standard normals, one has
\begin{equation}
P(f_{box}^{(r)}(\h,\beta)>(1+\epsilon_{1}^{(n)})f_{box}(\beta)\sqrt{n})\geq 1-e^{-\epsilon_{2}^{(n)} n},\label{eq:devhboxper}
\end{equation}
where
\begin{equation}
f_{box}(\beta)=\lim_{n\rightarrow \infty}\frac{Ef_{box}^{(r)}(\h,\beta)}{\sqrt{n}}=\lim_{n\rightarrow\infty}\frac{E\left ( \min_{\x_i\in\left [-\frac{1}{\sqrt{\beta n}},\frac{1}{\sqrt{\beta n}}\right ],\|\x\|_2^2=1}\h^T\x\right )}{\sqrt{n}},\label{eq:fbox}
\end{equation}
and $\epsilon_{1}^{(n)}>0$ is an arbitrarily small constant and analogously as above $\epsilon_{2}^{(n)}$ is a constant dependent on $\epsilon_{1}^{(n)}$ and
$f_{box}(\h,\beta)$ but independent of $n$.
Following what was done in the previous sections one then has
\begin{multline}
P\left (\min_{\x_i\in\left [-\frac{1}{\sqrt{\beta n}},\frac{1}{\sqrt{\beta n}}\right ],\|\x\|_2^2=1}\max_{\|\lambda\|_2=1,\lambda_i\geq 0}\left (\g^T\lambda+\h^T\x+\frac{\kappa}{\sqrt{\beta}}\lambda^T\1-\epsilon_{5}^{(g)}\sqrt{n}\right )\geq \xi_{box}^{(l)}(\beta)\right )\\\hspace{-.5in}\geq
(1-e^{-\epsilon_{2}^{(m)} m})(1-e^{-\epsilon_{2}^{(n)} n})
P\left ((1-\epsilon_{1}^{(m)})\sqrt{\alpha f_{gar}\left (\frac{\kappa}{\sqrt{\beta}}\right )}+(1+\epsilon_{1}^{(n)})f_{box}(\beta)-\epsilon_{5}^{(g)}\geq \frac{\xi_{box}^{(l)}(\beta)}{\sqrt{n}}\right ),
\label{eq:negprobanal2boxper}
\end{multline}
where as earlier
\begin{equation}
f_{gar}\left (\frac{\kappa}{\sqrt{\beta}}\right )=\frac{1}{\sqrt{2\pi}}\int_{-\frac{\kappa}{\sqrt{\beta}}}^{\infty}\left (\g_i+\frac{\kappa}{\sqrt{\beta}}\right )^2e^{-\frac{\g_i^2}{2}}d\g_i
=\frac{\kappa e^{-\frac{\kappa^2}{2\beta}}}{\sqrt{2\beta\pi}}+\frac{(\frac{\kappa^2}{\beta}+1)\mbox{erfc}\left ( -\frac{\kappa}{\sqrt{2\beta}}\right )}{2},\label{eq:fgarscaledboxper}
\end{equation}
and $\epsilon_{5}^{(g)}$, $\epsilon_1^{(m)}$ are arbitrarily small positive constants and $\epsilon_2^{(m)}$ is a constant possibly dependent on $\epsilon_1^{(m)}$ and $f_{gar}(\frac{\kappa}{\sqrt{\beta}})$ but independent of $n$.
If
\begin{equation}
(1-\epsilon_{1}^{(m)})\sqrt{\alpha f_{gar}\left (\frac{\kappa}{\sqrt{\beta}}\right )}+(1+\epsilon_{1}^{(n)} )f_{box}(\beta)-\epsilon_{5}^{(g)}>\frac{\xi_{box}^{(l)}(\beta)}{\sqrt{n}},\label{eq:negcondxipuboxper}
\end{equation}
one then has from (\ref{eq:negprobanal2boxper})
\begin{equation}
\lim_{n\rightarrow\infty}P\left (\min_{\x_i\in\left [-\frac{1}{\sqrt{\beta n}},\frac{1}{\sqrt{\beta n}}\right ],\|\x\|_2^2=1}\max_{\|\lambda\|_2=1,\lambda_i\geq 0}\left (\g^T\lambda+\h^T\x+\frac{\kappa}{\sqrt{\beta}}\lambda^T\1-\epsilon_{5}^{(g)}\sqrt{n}\right )\geq \xi_{box}^{(l)}(\beta)\right )\geq 1.\label{eq:negprobanal3boxper}
\end{equation}

As in previous sections, we will also need the following simple estimate related to the left hand side of the inequality in (\ref{eq:negproblemmaboxper}). From (\ref{eq:negproblemmaboxper}) one has the following as the probability of interest
\begin{equation}
P\left (\min_{\x_i\in\left [-\frac{1}{\sqrt{\beta n}},\frac{1}{\sqrt{\beta n}}\right ],\|\x\|_2^2=1}\max_{\|\lambda\|_2=1,\lambda_i\geq 0}\left (\frac{\kappa}{\sqrt{\beta}}\lambda^T\1-\lambda^TH\x+g-\epsilon_{5}^{(g)}\sqrt{n}-\xi_{box}^{(l)}(\beta)\right )\geq 0\right ).\label{eq:leftnegprobanal0boxper}
\end{equation}
Following again what was done in Section \ref{sec:probanalrigpm1} (and ultimately in \cite{StojnicGardGen13} between equations $(21)$ and $(24)$) one has,
assuming that (\ref{eq:negcondxipuboxper}) holds,
\begin{multline}
\lim_{n\rightarrow\infty}P\left (\min_{\x_i\in\left [-\frac{1}{\sqrt{\beta n}},\frac{1}{\sqrt{\beta n}}\right ],\|\x\|_2^2=1}\max_{\|\lambda\|_2=1,\lambda_i\geq 0}\left (\frac{\kappa}{\sqrt{\beta}}\lambda^T\1-\lambda^TH\x\right )\geq \xi_{box}^{(l)}(\beta)\right )\\\geq \lim_{n\rightarrow\infty}P\left (\min_{\x_i\in\left [-\frac{1}{\sqrt{\beta n}},\frac{1}{\sqrt{\beta n}}\right ],\|\x\|_2^2=1}\max_{\|\lambda\|_2=1,\lambda_i\geq 0}\left (\g^T\y+\h^T\x+\frac{\kappa}{\sqrt{\beta}}\lambda^T\1-\epsilon_{5}^{(g)}\sqrt{n}\right )\geq \xi_{box}^{(l)}(\beta)\right )\geq 1.\label{eq:leftnegprobanal3boxper}
\end{multline}
To have the above strategy operational one needs an estimate on $f_{box}(\beta)$. In the following subsection we present a way to obtain such an estimate.

\subsubsection{Estimating $f_{box}(\beta)$}
\label{sec:estfbox}

In this subsection we look at $f_{box}(\beta)$. It is relatively easy to see that a lower bound on $f_{box}(\beta)$ will enable the above machinery to work (we will actually determine more than that but for the purposes we need here a lower bound would be sufficient). Instead of directly looking at $f_{box}(\beta)$ we start actually by first looking at $f_{box}^{(r)}(\h,\beta)$. To that end we recall that from (\ref{eq:defrfbox})
\begin{equation}
f_{box}^{(r)}(\h,\beta)=\min_{\x_i\in\left [-\frac{1}{\sqrt{\beta n}},\frac{1}{\sqrt{\beta n}}\right ],\|\x\|_2^2=1}\h^T\x.\label{eq:defrfbox1}
\end{equation}
One then easily has
\begin{equation}
f_{box}^{(r)}(\h,\beta)=\frac{1}{\sqrt{\beta n}}\min_{\x_i\in\left [-1,1\right ],\|\x\|_2^2=\beta n}\h^T\x.\label{eq:defrfbox2}
\end{equation}
The following line of identities/inequalities is also easy to establish
\begin{eqnarray}
f_{box}^{(r)}(\h,\beta) & = & \frac{1}{\sqrt{\beta n}}\min_{\x_i\in\left [-1,1\right ],\|\x\|_2^2=\beta n}\h^T\x\nonumber \\
& = & \frac{1}{\sqrt{\beta n}}\min_{\x_i\in\left [-1,1\right ]}\max_{\gamma\geq 0}(\h^T\x+\gamma\|\x\|_2^2-\gamma\beta n)\nonumber \\
& \geq & \frac{1}{\sqrt{\beta n}} \max_{\gamma\geq 0}\min_{\x_i\in\left [-1,1\right ]}(\h^T\x+\gamma\|\x\|_2^2-\gamma\beta n)\nonumber \\
& = & \frac{1}{\sqrt{\beta n}} \max_{\gamma\geq 0}(\sum_{i=1}^{n}f_{box}^{(r,1)}(\h_i,\gamma)-\gamma\beta n),\label{eq:defrfbox3}
\end{eqnarray}
where
\begin{equation}
f_{box}^{(r,1)}(\h_i,\gamma)=\begin{cases}\h_i+\gamma, & \h_i\leq -2\gamma\\
-\frac{\h_i^2}{4\gamma}, & |\h_i|\leq 2\gamma\\
-\h_i+\gamma, & \h_i\geq 2\gamma.
\end{cases}\label{eq:casesfbox}
\end{equation}
Although we don't need it here, we do mention that the strong duality holds and the inequality can be replaced with an equality. Combining (\ref{eq:fbox}), (\ref{eq:defrfbox2}), and (\ref{eq:defrfbox3}) one then has
\begin{eqnarray}
f_{box}(\beta)=\lim_{n\rightarrow \infty}\frac{Ef_{box}^{(r)}(\h,\beta)}{\sqrt{n}} & = &\lim_{n\rightarrow\infty}\frac{E\left ( \min_{\x_i\in\left [-\frac{1}{\sqrt{\beta n}},\frac{1}{\sqrt{\beta n}}\right ],\|\x\|_2^2=1}\h^T\x\right )}{\sqrt{n}}\nonumber \\
& = & \frac{1}{\sqrt{\beta}}E\max_{\gamma\geq 0}(f_{box}^{(r,1)}(\h_i,\gamma)-\gamma\beta )\nonumber \\
& \geq  & \frac{1}{\sqrt{\beta}}\max_{\gamma\geq 0}(Ef_{box}^{(r,1)}(\h_i,\gamma)-\gamma\beta ).\label{eq:fbox1}
\end{eqnarray}
The last inequality can be replaced by an inequality. For what we need here though the inequality suffices (however, one should keep this as well as the above mentioned strong duality point in mind since they will be of use in the next subsection). After solving the integrals one finds
\begin{equation}
Ef_{box}^{(r,1)}(\h_i,\gamma)=I_1^{(box)}+I_2^{(box)},\label{eq:Efbox}
\end{equation}
where
\begin{eqnarray}
I_1^{(box)} & = & -\frac{2e^{-2\gamma^2}}{\sqrt{2\pi}}+\gamma\mbox{erfc}\left (\frac{2\gamma}{\sqrt{2}}\right )\nonumber \\
I_2^{(box)} & = & -\frac{1}{2\gamma}\left (-\frac{2\gamma e^{-2\gamma^2}}{\sqrt{2\pi}}+\frac{1}{2}\left (\mbox{erfc}\left (-\frac{2\gamma}{\sqrt{2}}\right )-1\right )\right).
\end{eqnarray}

We summarize the results from this and previous subsection in the following theorem.

\begin{theorem}
Let $H$ be an $m\times n$ matrix with i.i.d. standard normal components. Let $n$ be large and let $m=\alpha n$, where $\alpha>0$ is a constant independent of $n$. Let $\xi_{box}$ be as in (\ref{eq:feasboxper}) and let $\kappa>0$ be a scalar constant independent of $n$. Let all $\epsilon$'s be arbitrarily small constants independent of $n$. Further, let $\g_i$ be a standard normal random variable and set
\begin{equation}
f_{gar}\left (\frac{\kappa}{\sqrt{\beta}}\right ) = \frac{1}{\sqrt{2\pi}}\int_{-\frac{\kappa}{\sqrt{\beta}}}^{\infty}\left (\g_i+\frac{\kappa}{\sqrt{\beta}}\right )^2e^{-\frac{\g_i^2}{2}}d\g_i
=\frac{\kappa e^{-\frac{\kappa^2}{2\beta}}}{\sqrt{2\beta\pi}}+\frac{(\frac{\kappa^2}{\beta}+1)\mbox{erfc}\left ( -\frac{\kappa}{\sqrt{2\beta}}\right )}{2},\label{eq:thmfgarboxper}
\end{equation}
and
\begin{equation}
\widehat{f_{box}}(\beta)=\frac{1}{\sqrt{\beta}}\max_{\gamma\geq 0}\left (-\frac{e^{-2\gamma^2}}{\sqrt{2\pi}}+\gamma\mbox{erfc}\left (\frac{2\gamma}{\sqrt{2}}\right )-\frac{1}{4\gamma}\left (\mbox{erfc}\left (-\frac{2\gamma}{\sqrt{2}}\right )-1\right )-\gamma\beta \right ).\label{eq:thmwhfboxboxper}
\end{equation}
Let $\xi_{box}^{(l)}(\beta)$ be a scalar such that
\begin{equation}
(1-\epsilon_{1}^{(m)})\sqrt{\alpha f_{gar}\left (\frac{\kappa}{\sqrt{\beta}}\right )}+(1+\epsilon_{1}^{(n)} )\widehat{f_{box}}(\beta)-\epsilon_{5}^{(g)}>\frac{\xi_{box}^{(l)}(\beta)}{\sqrt{n}}.\label{eq:condxinthmstoc30boxper}
\end{equation}
Then
\begin{equation}
\hspace{-.3in} \lim_{n\rightarrow\infty}P(\xi_{box}(\beta)\geq \xi_{box}^{(l)}(\beta))=\lim_{n\rightarrow\infty}P\left (\min_{\x_i\in\left [-\frac{1}{\sqrt{\beta n}},\frac{1}{\sqrt{\beta n}}\right ],\|\x\|_2^2=1}\max_{\|\lambda\|_2=1,\lambda_i\geq 0}\left (\frac{\kappa}{\sqrt{\beta}}\lambda^T\1-\lambda^TH\x\right )\geq \xi_{box}^{(l)}(\beta)\right )\geq 1. \label{eq:probthmc3001per}
\end{equation}
Moreover, let $\xi_{box}^{(l)}$ be a scalar such that
\begin{equation}
\min_{\beta\in(0,1]}\left((1-\epsilon_{1}^{(m)})\sqrt{\alpha f_{gar}\left (\frac{\kappa}{\sqrt{\beta}}\right )}+(1+\epsilon_{1}^{(n)})\widehat{f_{box}}(\beta)-\epsilon_{5}^{(g)}\right )>\min_{\beta\in(0,1]}\frac{\xi_{box}^{(l)}(\beta)}{\sqrt{n}}
=\frac{\xi_{box}^{(l)}}{\sqrt{n}}.\label{eq:condxinthmstoc30boxpernobeta}
\end{equation}
Then
\begin{equation}
\lim_{n\rightarrow\infty}P(\xi_{box}\geq \xi_{box}^{(l)})=\lim_{n\rightarrow\infty}P\left (\min_{\x_i\in\left [-\frac{1}{\sqrt{n}},\frac{1}{\sqrt{n}}\right ],\|\x\|_2^2=1}\max_{\|\lambda\|_2=1,\lambda_i\geq 0}\left (\frac{\kappa}{\sqrt{\beta}}\lambda^T\1-\lambda^TH\x\right )\geq \xi_{box}^{(l)}\right )\geq 1 \label{eq:probthmc30boxpernobeta}
\end{equation}
and (\ref{eq:defprobucor2boxper}) is infeasible with overwhelming probability.
\label{thm:boxperc30}
\end{theorem}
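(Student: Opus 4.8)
The plan is to follow the proofs of Theorems~\ref{thm:pm1c30} and~\ref{thm:01perc30} almost verbatim, the only genuinely new ingredient being the estimate $\widehat{f_{box}}(\beta)$ for the quantity $f_{box}(\beta)$ of (\ref{eq:fbox}). Fix $\beta\in(0,1]$. First I would invoke Lemma~\ref{lemma:negproblemmaboxper} --- an immediate consequence of Gordon's Theorem~\ref{thm:Gordonmesh1} --- with $\zeta_{\lambda}=-\frac{\kappa}{\sqrt{\beta}}\lambda^T\1+\epsilon_{5}^{(g)}\sqrt{n}+\xi_{box}^{(l)}(\beta)$ to replace the coupled process $-\lambda^TH\x$ by the decoupled process $\g^T\lambda+\h^T\x$. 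On the decoupled side the maximization over $\lambda$ in the unit ball intersected with the nonnegative orthant gives $\|(\g+\frac{\kappa}{\sqrt{\beta}}\1)_+\|_2$, whose square is a sum of $m$ i.i.d.\ copies of $(\g_i+\kappa/\sqrt{\beta})_+^2$; the law of large numbers together with an exponential tail bound then yield $\|(\g+\frac{\kappa}{\sqrt{\beta}}\1)_+\|_2\ge(1-\epsilon_{1}^{(m)})\sqrt{m f_{gar}(\kappa/\sqrt{\beta})}$ with probability at least $1-e^{-\epsilon_{2}^{(m)}m}$, exactly as in (\ref{eq:negprobanal2boxper}). The minimization over $\x$ over the box produces $f_{box}^{(r)}(\h,\beta)$, which, being the minimum of a linear form in the i.i.d.\ Gaussian vector $\h$, concentrates around its mean as in (\ref{eq:devhboxper}).

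The heart of the argument is then the lower bound $f_{box}(\beta)\ge\widehat{f_{box}}(\beta)$. Introducing a Lagrange multiplier $\gamma\ge0$ for the norm constraint and using weak duality gives the chain (\ref{eq:defrfbox3}), after which the per-coordinate minimization over $[-1,1]$ is the explicit piecewise-quadratic $f_{box}^{(r,1)}(\h_i,\gamma)$ of (\ref{eq:casesfbox}); exchanging $E$ with $\max_{\gamma\ge0}$ in the favorable direction as in (\ref{eq:fbox1}) and evaluating the three scalar Gaussian integrals (\ref{eq:Efbox}) yields the closed form $\widehat{f_{box}}(\beta)$ of (\ref{eq:thmwhfboxboxper}); note this quantity is negative, as it must be since it lower-bounds the minimum of a mean-zero linear form. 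Substituting the two estimates back, under (\ref{eq:condxinthmstoc30boxper}) the right-hand side of (\ref{eq:negproblemmaboxper}) tends to $1$, and the transfer argument reproduced in (\ref{eq:leftnegprobanal3boxper}) (the analogue of equations $(21)$--$(24)$ of \cite{StojnicGardGen13}) carries this back to the original $H$-side, so that $\lim_{n\to\infty}P(\xi_{box}(\beta)\ge\xi_{box}^{(l)}(\beta))\ge1$.

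Finally, to remove the dependence on $\beta$ I would recall from (\ref{eq:feasboxper1}) that $\xi_{box}=\min_{\beta\in(0,1]}\xi_{box}(\beta)$ and use that, as explained right after (\ref{eq:defprobucor2boxper}), it suffices to run the analysis over a grid of $\beta$'s whose cardinality is independent of $n$; a union bound over this grid costs only a $1-o(1)$ factor in the overwhelming-probability statement, and the infimum over $\beta$ of the left-hand side of (\ref{eq:condxinthmstoc30boxper}) controls $\xi_{box}^{(l)}/\sqrt{n}$ via (\ref{eq:condxinthmstoc30boxpernobeta}). Whenever $\alpha$ and $\kappa$ make that infimum strictly positive one may pick $\xi_{box}^{(l)}>0$, which forces $\xi_{box}>0$ with overwhelming probability and hence the infeasibility of (\ref{eq:defprobucor2boxper}).

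I expect the only real obstacle to be the estimate of $f_{box}(\beta)$: one must check that the Lagrangian relaxation is used in the correct direction (only ``$\ge$'' is needed here, so weak duality suffices --- the paper even notes that strong duality in fact holds), that the exchange of expectation and $\max_{\gamma}$ in (\ref{eq:fbox1}) goes the favorable way, and that the three scalar integrals are computed correctly so that (\ref{eq:thmwhfboxboxper}) comes out exactly. Everything else --- the Gordon comparison, the concentration steps, and the $\beta$-discretization union bound --- is structurally identical to the $\pm1$ and $0/1$ cases already treated and introduces no new difficulty.
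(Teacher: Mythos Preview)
Your proposal is correct and mirrors the paper's own argument essentially step for step: the Gordon comparison via Lemma~\ref{lemma:negproblemmaboxper} with the same choice of $\zeta_\lambda$, the explicit evaluation of the $\lambda$-maximization as $\|(\g+\frac{\kappa}{\sqrt{\beta}}\1)_+\|_2$ with concentration around $\sqrt{mf_{gar}(\kappa/\sqrt{\beta})}$, the concentration of $f_{box}^{(r)}(\h,\beta)$, the Lagrangian lower bound (\ref{eq:defrfbox3})--(\ref{eq:fbox1}) leading to $\widehat{f_{box}}(\beta)$, the transfer back to the $H$-side, and the union bound over discretized $\beta$. Your identification of the only non-routine ingredient --- the $\widehat{f_{box}}(\beta)$ estimate and the direction of the two inequalities involved --- matches exactly what the paper isolates in Section~\ref{sec:estfbox}.
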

\begin{proof}
Follows from the above discussion and the comments right after (\ref{eq:defprobucor2boxper}).
\end{proof}

In a more informal language (as earlier, essentially ignoring all technicalities and $\epsilon$'s) one has the following: let $\widehat{\alpha}$ be the smallest $\alpha$ such that
\begin{equation}
\min_{\beta\in(0,1]}\left(\sqrt{\alpha f_{gar}\left (\frac{\kappa}{\sqrt{\beta}}\right )}+\widehat{f_{box}}(\beta)\right )=0.\label{eq:infalphaboxper}
\end{equation}
Then as long as
\begin{equation}
\alpha>\widehat{\alpha},\label{eq:condalphaboxper}
\end{equation}
the problem in (\ref{eq:defprobucor2boxper}) will be infeasible with overwhelming probability. While it does take a bit of work to show that the above indeed matches the prediction obtained in \cite{GutSte90}, it is a straightforward functional analysis exercise and we omit it.

In the next subsection we will show that one can not really lower the storage capacity upper bound given above.

\subsubsection{Upper-bounding (the sign of) $\xi_{box}$}
\label{sec:uncorgardub}

In the previous subsection we designed a lower bound on $\xi_{box}$ which then helped us determine an upper bound on the critical storage capacity $\alpha_{box,c}$ of the box-constrained perceptron (essentially the one determined by Theorem \ref{thm:boxperc30}). In this subsection we will provide a mechanism that can be used to upper bound a quantity similar to $\xi_{box}$ (which will maintain the sign of $\xi_{box}$). Such an upper bound then can be used to obtain a lower bound on the critical storage capacity $\alpha_{box,c}$. As mentioned above, we will start by looking at a quantity very similar to $\xi_{box}$. In order to do that we will first recall on the definition of $\xi_{box}$ from (\ref{eq:feasboxper1}) and (\ref{eq:feasbox})
\begin{equation}
\xi_{box}=\min_{\beta\in(0,1]} \xi_{box}(\beta),\label{eq:feasboxper1ub}
\end{equation}
where
\begin{eqnarray}
\xi_{box}(\beta)=\min_{\x} \max_{\lambda\geq 0} & &  \kappa\lambda^T\1- \lambda^TH\x \nonumber \\
\mbox{subject to} & & \|\lambda\|_2= 1\nonumber \\
& & \x_i\in\left [-\frac{1}{\sqrt{n}},\frac{1}{\sqrt{n}}\right ],1\leq i\leq n \nonumber \\
& & \|\x\|_2^2=\beta .\label{eq:feasboxub}
\end{eqnarray}
The strategy presented above then assumed fixing a $\beta$ (from a discretized range of all $\beta$'s, namely $(0,1]$) and showing that for any such fixed $\beta$ $\xi_{box}>0$ with overwhelming probability. The bulk of the work then centered around determining conditions on $\alpha$ and $\kappa$ so that $\xi_{box}(\beta)>0$. Below we will design a similar mechanism that will be used to determine conditions on $\alpha$ and $\kappa$ so that $\xi_{box}(\beta)\leq 0$. In fact, instead of dealing explicitly with $\xi_{box}(\beta)$ defined above we will find a bit more convenient to deal with its a slight variation $\xi_{box,r}(\beta)$ which will be defined as
\begin{eqnarray}
\xi_{box,r}(\beta)=\min_{\x} \max_{\lambda\geq 0} & &  \kappa\lambda^T\1- \lambda^TH\x \nonumber \\
\mbox{subject to} & & \|\lambda\|_2\leq 1\nonumber \\
& & \x_i\in\left [-\frac{1}{\sqrt{n}},\frac{1}{\sqrt{n}}\right ],1\leq i\leq n \nonumber \\
& & \|\x\|_2^2\leq\beta .\label{eq:feasboxmodub}
\end{eqnarray}
Following further what was done in the previous section, one can scale down everything to obtain a redefined $\xi_{box,r}(\beta)$
\begin{eqnarray}
\xi_{box,r}(\beta)=\min_{\x} \max_{\lambda\geq 0} & &  \frac{\kappa}{\sqrt{\beta}}\lambda^T\1- \lambda^TH\x \nonumber \\
\mbox{subject to} & & \|\lambda\|_2\leq 1\nonumber \\
& & \x_i\in\left [-\frac{1}{\sqrt{\beta n}},\frac{1}{\sqrt{\beta n}}\right ],1\leq i\leq n \nonumber \\
& & \|\x\|_2^2\leq 1.\label{eq:feasbox1ub}
\end{eqnarray}
Using duality one has
\begin{eqnarray}
\xi_{box,r}= \max_{\lambda\geq 0} \min_{\x} & &  \frac{\kappa}{\sqrt{\beta}}\lambda^T\1- \lambda^T H\x \nonumber \\
\mbox{subject to}
& & \|\lambda\|_2\leq 1\nonumber \\
& & \x_i\in\left [-\frac{1}{\sqrt{\beta n}},\frac{1}{\sqrt{\beta n}}\right ],1\leq i\leq n \nonumber \\
& & \|\x\|_2\leq 1,\label{eq:uncormaxminboxperub}
\end{eqnarray}
and alternatively
\begin{eqnarray}
-\xi_{box,r}= \min_{\lambda\geq 0} \max_{\x} & &  -\frac{\kappa}{\sqrt{\beta}}\lambda^T\1+\lambda^T H\x \nonumber \\
\mbox{subject to}
& & \|\lambda\|_2\leq 1\nonumber \\
& & \x_i\in\left [-\frac{1}{\sqrt{\beta n}},\frac{1}{\sqrt{\beta n}}\right ],1\leq i\leq n \nonumber \\
& & \|\x\|_2\leq 1.\label{eq:uncormaxmin1boxperub}
\end{eqnarray}
We will now proceed in a fashion similar to the one presented in the previous subsection. We will make use of the following lemma (the lemma is fairly similar to Lemmas \ref{lemma:negproblemma}, \ref{lemma:negproblemma01per}, \ref{lemma:negproblemmaboxper}, and of course fairly similar to Lemma 3.1 in \cite{Gordon88}).
\begin{lemma}
Let $H$ be an $m\times n$ matrix with i.i.d. standard normal components. Let $\g$ and $\h$ be $m\times 1$ and $n\times 1$ vectors, respectively, with i.i.d. standard normal components. Also, let $g$ be a standard normal random variable and let $\zeta_{\lambda}$ be a function of $\x$. Then
\begin{multline}
P (\min_{\|\lambda\|_2\leq 1,\lambda_i\geq 0}\max_{\|\x\|_2\leq 1,\x_i\in\left [-\frac{1}{\sqrt{\beta n}},\frac{1}{\sqrt{\beta n}}\right ]}(\lambda^T H\x+g\|\lambda\|_2\|\x\|_2-\zeta_{\lambda} )\geq 0 )\\\geq
P(\min_{\|\lambda\|_2\leq 1,\lambda_i\geq 0}\max_{\|\x\|_2\leq 1,\x_i\in\left [-\frac{1}{\sqrt{\beta n}},\frac{1}{\sqrt{\beta n}}\right ]} (\|\x\|_2\g^T\lambda+\|\lambda\|_2\h^T\x-\zeta_{\lambda})\geq 0 ).\label{eq:negproblemmaboxperub}
\end{multline}\label{lemma:negproblemmaboxperub}
\end{lemma}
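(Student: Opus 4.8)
The plan is to recognize the claimed inequality as a direct instance of Gordon's comparison inequality, Theorem~\ref{thm:Gordonmesh1}, in exactly the same spirit as Lemmas~\ref{lemma:negproblemma}, \ref{lemma:negproblemma01per} and \ref{lemma:negproblemmaboxper}. The only genuinely new feature here is the presence of the norm factors $\|\lambda\|_2\|\x\|_2$ multiplying the auxiliary standard normals: these are forced by the fact that $\|\lambda\|_2$ and $\|\x\|_2$ are now only required to be $\le 1$ (rather than $=1$ as in the earlier lemmas), and they are precisely what makes the two Gaussian processes below share the same variance.

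First I would introduce, for each feasible pair $(\lambda,\x)$ with $\lambda_i\ge 0$, $\|\lambda\|_2\le 1$, $\x_i\in[-\frac{1}{\sqrt{\beta n}},\frac{1}{\sqrt{\beta n}}]$, $\|\x\|_2\le 1$, the two centered Gaussian processes
\begin{equation*}
X_{\lambda,\x}=\lambda^TH\x+g\|\lambda\|_2\|\x\|_2,\qquad
Y_{\lambda,\x}=\|\x\|_2\g^T\lambda+\|\lambda\|_2\h^T\x,
\end{equation*}
where the entries of $H$, $\g$, $\h$ and the scalar $g$ are mutually independent standard normals, and where the index $\lambda$ plays the role of the outer (min) index and $\x$ the role of the inner (max) index of Theorem~\ref{thm:Gordonmesh1}. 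A short second-moment computation, using $E[H_{ij}H_{kl}]=\delta_{ik}\delta_{jl}$ and the independence of $g,\g,\h$, gives $E(X_{\lambda,\x}^2)=E(Y_{\lambda,\x}^2)=2\|\lambda\|_2^2\|\x\|_2^2$; for a fixed $\lambda$ and two inner points $\x,\x'$ it gives
\begin{equation*}
E(X_{\lambda,\x}X_{\lambda,\x'})=E(Y_{\lambda,\x}Y_{\lambda,\x'})=\|\lambda\|_2^2\left(\x^T\x'+\|\x\|_2\|\x'\|_2\right);
\end{equation*}
and for $\lambda\ne\lambda'$ it gives
\begin{equation*}
E(X_{\lambda,\x}X_{\lambda',\x'})-E(Y_{\lambda,\x}Y_{\lambda',\x'})=\left(\lambda^T\lambda'-\|\lambda\|_2\|\lambda'\|_2\right)\left(\x^T\x'-\|\x\|_2\|\x'\|_2\right)\ge 0,
\end{equation*}
the last inequality holding because both factors are nonpositive by Cauchy--Schwarz.

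Next I would invoke Theorem~\ref{thm:Gordonmesh1} with the $(\g,\h)$-process $Y$ in the role of its ``$X$'' and the $(H,g)$-process $X$ in the role of its ``$Y$''. With this assignment, its hypotheses~1--3 are exactly the variance identity, the equal-$\lambda$ covariance identity (which yields hypothesis~2 with equality), and the cross-$\lambda$ comparison just displayed; the thresholds $\lambda_{ij}$ are taken to be $\zeta_\lambda$, a deterministic function of the outer index only. Since the event $\{\min_\lambda\max_\x(Z_{\lambda,\x}-\zeta_\lambda)\ge 0\}$ coincides with $\bigcap_\lambda\bigcup_\x\{Z_{\lambda,\x}\ge\zeta_\lambda\}$ for any process $Z$, the conclusion of the theorem reads
\begin{equation*}
P\!\left(\min_\lambda\max_\x\left(Y_{\lambda,\x}-\zeta_\lambda\right)\ge 0\right)\le P\!\left(\min_\lambda\max_\x\left(X_{\lambda,\x}-\zeta_\lambda\right)\ge 0\right),
\end{equation*}
which is precisely (\ref{eq:negproblemmaboxperub}).

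The only point needing care, as in the earlier analogous lemmas, is that $\lambda$ and $\x$ range over infinite (compact, convex) sets, whereas Theorem~\ref{thm:Gordonmesh1} is stated for finitely many indices. I would handle this exactly as in the proof of Lemma~3.1 of \cite{Gordon88} (invoked already in the proofs of Lemmas~\ref{lemma:negproblemma}--\ref{lemma:negproblemmaboxper}): pass to finite $\epsilon$-nets of the two feasible sets, apply the finite version of the comparison, and then refine the nets, using continuity of the min--max functionals together with a union bound over the nets; the box/sphere structure of the constraint sets introduces no structural obstruction. I expect this discretization-and-limiting bookkeeping to be the only mildly delicate part — the covariance computations themselves are immediate.
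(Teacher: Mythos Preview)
Your proposal is correct and follows exactly the route the paper intends: the paper's own proof simply says that the argument of Lemma~\ref{lemma:negproblemma} (and ultimately of Lemma~3.1 in \cite{Gordon88}) applies verbatim, and what you have written is precisely that argument spelled out, including the covariance identities and the factorization $(\lambda^T\lambda'-\|\lambda\|_2\|\lambda'\|_2)(\x^T\x'-\|\x\|_2\|\x'\|_2)\ge 0$ that drives hypothesis~3 of Theorem~\ref{thm:Gordonmesh1}. One tiny remark: you describe $\zeta_\lambda$ as depending on the outer index only, but the lemma (and its subsequent use with $\zeta_\lambda=\frac{\kappa}{\sqrt{\beta}}\lambda^T\1+\epsilon_5^{(g)}\sqrt{n}\|\lambda\|_2\|\x\|_2$) allows dependence on $\x$ as well; this is harmless since Gordon's thresholds $\lambda_{ij}$ may depend on both indices.
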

\begin{proof}
The discussion related to the proof of Lemma \ref{lemma:negproblemma} applies here as well.
\end{proof}

Let $\zeta_{\lambda}=\frac{\kappa}{\sqrt{\beta}}\lambda^T\1+\epsilon_{5}^{(g)}\sqrt{n}\|\lambda\|_2\|\x\|_2$ with $\epsilon_{5}^{(g)}>0$ being an arbitrarily small constant independent of $n$. We will follow the strategy of the previous subsection and start by first looking at the right-hand side of the inequality in (\ref{eq:negproblemmaboxperub}). The following is then the probability of interest
\begin{equation}
P\left (\min_{\|\lambda\|_2\leq 1,\lambda_i\geq 0,\lambda\neq 0}\max_{\|\x\|_2\leq 1,\x_i\in\left [-\frac{1}{\sqrt{\beta n}},\frac{1}{\sqrt{\beta n}}\right ]}\left (\|\x\|_2\g^T\lambda+\|\lambda\|_2\h^T\x-\frac{\kappa}{\sqrt{\beta}}\lambda^T\1-\epsilon_{5}^{(g)}\sqrt{n}\|\lambda\|_2\|\x\|_2\right )>0\right ),\label{eq:negprobanal0boxperub}
\end{equation}
where for the easiness of writing we removed possibility $\lambda=0$ (also, such a case contributes in no way to the possibility that $-\xi_{box,r}(\beta)<0)$.
After solving the maximization over $\x$ one obtains
\begin{multline}
P\left (\min_{\|\lambda\|_2\leq 1,\lambda_i\geq 0,\lambda\neq 0}\max_{\|\x\|_2\leq 1,\x_i\in\left [-\frac{1}{\sqrt{\beta n}},\frac{1}{\sqrt{\beta n}}\right ]}\left (\|\x\|_2\g^T\lambda+\|\lambda\|_2\h^T\x-\frac{\kappa}{\sqrt{\beta}}\lambda^T\1-\epsilon_{5}^{(g)}\sqrt{n}\|\lambda\|_2\|\x\|_2\right )> 0\right )\\=P\left (\min_{\|\lambda\|_2\leq 1,\lambda_i\geq 0,\lambda\neq 0}\left (\max\left (0,-f_{box}^{(r)}(\h,\beta)\|\lambda\|_2+\g\lambda-\epsilon_{5}^{(g)}\sqrt{n}\|\lambda\|_2\right )-\frac{\kappa}{\sqrt{\beta}}\lambda^T\1\right )>0\right ).\label{eq:negprobanal1boxperub}
\end{multline}
Now, we will for a moment assume that $\alpha$ (i.e., $m$ and $n$) and $\frac{\kappa}{\sqrt{\beta}}$ are such that
\begin{equation}
\lim_{n\rightarrow\infty}P\left (\min_{\|\lambda\|_2\leq 1,\lambda_i\geq 0,\lambda\neq 0}\left (-f_{box}^{(r)}(\h,\beta)\|\lambda\|_2+\g\lambda-\epsilon_{5}^{(g)}\sqrt{n}\|\lambda\|_2-\frac{\kappa}{\sqrt{\beta}}\lambda^T\1\right )> 0\right )=1.\label{eq:negprobanal2boxperub}
\end{equation}
That would also imply that
\begin{equation}
\lim_{n\rightarrow\infty}P\left (\min_{\|\lambda\|_2\leq 1,\lambda_i\geq 0,\lambda\neq 0}\left (\max(0,-f_{box}^{(r)}(\h,\beta)\|\lambda\|_2+\g\lambda-\epsilon_{5}^{(g)}\sqrt{n}\|\lambda\|_2)-\frac{\kappa}{\sqrt{\beta}}\lambda^T\1\right )>0\right )=1.\label{eq:negprobanal3boxperub}
\end{equation}
What is then left to be done is to determine an $\alpha=\frac{m}{n}$ such that (\ref{eq:negprobanal2boxperub}) holds. One then easily has
\begin{multline}
P\left (\min_{\|\lambda\|_2\leq 1,\lambda_i\geq 0,\lambda\neq 0}\left (-f_{box}^{(r)}(\h,\beta)\|\lambda\|_2+\g\lambda-\epsilon_{5}^{(g)}\sqrt{n}\|\lambda\|_2-\frac{\kappa}{\sqrt{\beta}}\lambda^T\1\right )> 0\right )\\=
P\left (\min_{\|\lambda\|_2\leq 1,\lambda_i\geq 0,\lambda\neq 0}\|\lambda\|_2\left (-f_{box}^{(r)}(\h,\beta)-\|\left (\g-\frac{\kappa}{\sqrt{\beta}}\1\right )_-\|_2-\epsilon_{5}^{(g)}\sqrt{n}\right )> 0\right ),\label{eq:negprobanal4boxperub}
\end{multline}
where similarly to what we had in Section \ref{sec:probanalrigboxper} $\left (\g-\frac{\kappa}{\sqrt{\beta}}\1\right )_-$ is $\left (\g-\frac{\kappa}{\sqrt{\beta}}\1\right )$ vector with positive components replaced by zeros. Also similarly to what we did in Section \ref{sec:probanalrigboxper}, since $\h$ is a vector of $n$ i.i.d. standard normal variables one can write
\begin{equation}
P(-f_{box}^{(r)}(\h,\beta)>-(1-\epsilon_{1}^{(n)})f_{box}(\beta)\sqrt{n})\geq 1-e^{-\epsilon_{2}^{(n)} n},\label{eq:devhboxperub}
\end{equation}
where we recall that $f_{box}(\beta)$ is as in (\ref{eq:fbox}), i.e.
\begin{equation}
f_{box}(\beta)=\lim_{n\rightarrow \infty}\frac{Ef_{box}^{(r)}(\h,\beta)}{\sqrt{n}}=\lim_{n\rightarrow\infty}\frac{E\left ( \min_{\x_i\in\left [-\frac{1}{\sqrt{\beta n}},\frac{1}{\sqrt{\beta n}}\right ],\|\x\|_2^2=1}\h^T\x\right )}{\sqrt{n}},\label{eq:fboxub}
\end{equation}
and $\epsilon$'s are as described in Section \ref{sec:probanalrigboxper}. Along the same lines, since $\g$ is a vector of $m$ i.i.d. standard normal variables one has similarly to what was done in previous sections (and ultimately in \cite{StojnicGardGen13})
\begin{equation}
P\left (\sqrt{\sum_{i=1}^{n}\left (\min\left \{\g_i-\frac{\kappa}{\sqrt{\beta}},0\right \}\right )^2}<(1+\epsilon_{1}^{(m)})\sqrt{mf_{gar}\left (\frac{\kappa}{\sqrt{\beta}}\right )}\right )\geq 1-e^{-\epsilon_{2}^{(m)} m},\label{eq:devgboxperub}
\end{equation}
where we recall that $\epsilon_{1}^{(m)}>0$ is an arbitrarily small constant and $\epsilon_{2}^{(m)}$ is a constant dependent on $\epsilon_{1}^{(m)}$ and
$f_{gar}(\frac{\kappa}{\sqrt{\beta}})$ but independent of $n$. Then a combination of (\ref{eq:negprobanal4boxperub}), (\ref{eq:devhboxperub}), and (\ref{eq:devgboxperub}) gives
\begin{multline}
P\left (\min_{\|\lambda\|_2\leq 1,\lambda_i\geq 0,\lambda\neq 0}\left (-f_{box}^{(r)}(\h,\beta)\|\lambda\|_2+\g\lambda-\epsilon_{5}^{(g)}\sqrt{n}\|\lambda\|_2-\frac{\kappa}{\sqrt{\beta}}\lambda^T\1\right )> 0\right )\\\geq
(1-e^{-\epsilon_{2}^{(m)} m})(1-e^{-\epsilon_{2}^{(n)} n})
P\left ((1-\epsilon_{1}^{(n)})(-f_{box}(\beta))\sqrt{n}-(1+\epsilon_{1}^{(m)})\sqrt{mf_{gar}\left (\frac{\kappa}{\sqrt{\beta}}\right )}-\epsilon_{5}^{(g)}\sqrt{n}> 0\right ).
\label{eq:negprobanal22boxperub}
\end{multline}
If
\begin{eqnarray}
& & (1-\epsilon_{1}^{(n)})(-f_{box}(\beta))\sqrt{n}-(1+\epsilon_{1}^{(m)})\sqrt{mf_{gar}\left (\frac{\kappa}{\sqrt{\beta}}\right )}-\epsilon_{5}^{(g)}\sqrt{n}>0\nonumber \\
& \Leftrightarrow & -(1-\epsilon_{1}^{(n)})f_{box}(\beta)-(1+\epsilon_{1}^{(m)})\sqrt{\alpha f_{gar}\left (\frac{\kappa}{\sqrt{\beta}}\right )}-\epsilon_{5}^{(g)}>0,\label{eq:negcondxipuboxperub}
\end{eqnarray}
one then has from (\ref{eq:negprobanal22boxperub})
\begin{equation}
\lim_{n\rightarrow\infty}P\left (\min_{\|\lambda\|_2\leq 1,\lambda_i\geq 0,\lambda\neq 0}\left (-f_{box}^{(r)}(\h,\beta)\|\lambda\|_2+\g\lambda-\epsilon_{5}^{(g)}\sqrt{n}\|\lambda\|_2-\frac{\kappa}{\sqrt{\beta}}\lambda^T\1\right )> 0\right )\geq 1.\label{eq:negprobanal33boxperub}
\end{equation}
A combination of  (\ref{eq:negprobanal1boxperub}),  (\ref{eq:negprobanal2boxperub}),  (\ref{eq:negprobanal3boxperub}), and  (\ref{eq:negprobanal33boxperub}) gives that if (\ref{eq:negcondxipuboxperub}) holds then
\begin{equation}
\hspace{-.3in}\lim_{n\rightarrow\infty}P\left (\min_{\|\lambda\|_2\leq 1,\lambda_i\geq 0,\lambda\neq 0}\max_{\|\x\|_2\leq 1,\x_i\in\left [-\frac{1}{\sqrt{\beta n}},\frac{1}{\sqrt{\beta n}}\right ]}\left (\|\x\|_2\g^T\lambda+\|\lambda\|_2\h^T\x-\frac{\kappa}{\sqrt{\beta}}\lambda^T\1-\epsilon_{5}^{(g)}\sqrt{n}\|\lambda\|_2\|\x\|_2\right )> 0\right )\geq 1.\label{eq:negprobanal44boxperub}
\end{equation}

We will now look at the left-hand side of the inequality in (\ref{eq:negproblemmaboxperub}). The following is then the probability of interest
\begin{equation}
P\left (\min_{\|\lambda\|_2\leq 1,\lambda_i\geq 0}\max_{\|\x\|_2\leq 1,\x_i\in\left [-\frac{1}{\sqrt{\beta n}},\frac{1}{\sqrt{\beta n}}\right ]}\left (\lambda^TH\x-\frac{\kappa}{\sqrt{\beta}}\lambda^T\1+(g-\epsilon_{5}^{(g)}\sqrt{n})\|\lambda\|_2\|\x\|_2\right )\geq 0\right ).\label{eq:leftnegprobanal0boxperub}
\end{equation}
Since $P(g\geq\epsilon_{5}^{(g)}\sqrt{n})<e^{-\epsilon_{6}^{(g)} n}$ (where $\epsilon_{6}^{(g)}$ is, as all other $\epsilon$'s in this paper are, independent of $n$) from (\ref{eq:leftnegprobanal0boxperub}) we have
\begin{multline}
P\left (\min_{\|\lambda\|_2\leq 1,\lambda_i\geq 0}\max_{\|\x\|_2\leq 1,\x_i\in\left [-\frac{1}{\sqrt{\beta n}},\frac{1}{\sqrt{\beta n}}\right ]}\left (\lambda^TH\x-\frac{\kappa}{\sqrt{\beta}}\lambda^T\1+(g-\epsilon_{5}^{(g)}\sqrt{n})\|\lambda\|_2\|\x\|_2\right )\geq 0\right )\\\leq P\left (\min_{\|\lambda\|_2\leq 1,\lambda_i\geq 0}\max_{\|\x\|_2\leq 1,\x_i\in\left [-\frac{1}{\sqrt{\beta n}},\frac{1}{\sqrt{\beta n}}\right ]}\left (\lambda^TH\x-\frac{\kappa}{\sqrt{\beta}}\lambda^T\1\right )\geq 0\right )+e^{-\epsilon_{6}^{(g)} n}.\label{eq:leftnegprobanal1boxperub}
\end{multline}
When $n$ is large from (\ref{eq:leftnegprobanal1boxperub}) we then have
\begin{multline}
\hspace{-.7in}\lim_{n\rightarrow \infty}P\left (\min_{\|\lambda\|_2\leq 1,\lambda_i\geq 0}\max_{\|\x\|_2\leq 1,\x_i\in\left [-\frac{1}{\sqrt{\beta n}},\frac{1}{\sqrt{\beta n}}\right ]}\left (\lambda^TH\x-\frac{\kappa}{\sqrt{\beta}}\lambda^T\1+(g-\epsilon_{5}^{(g)}\sqrt{n})\|\lambda\|_2\|\x\|_2\right )\geq 0\right )\\\leq \lim_{n\rightarrow \infty}P\left (\min_{\|\lambda\|_2\leq 1,\lambda_i\geq 0}\max_{\|\x\|_2\leq 1,\x_i\in\left [-\frac{1}{\sqrt{\beta n}},\frac{1}{\sqrt{\beta n}}\right ]}\left (\lambda^TH\x-\frac{\kappa}{\sqrt{\beta}}\lambda^T\1\right )\geq 0\right ).\label{eq:leftnegprobanal2boxperub}
\end{multline}
Assuming that (\ref{eq:negcondxipuboxperub}) holds, then a combination of (\ref{eq:uncormaxmin1boxperub}), (\ref{eq:negproblemmaboxperub}), (\ref{eq:negprobanal44boxperub}), and (\ref{eq:leftnegprobanal2boxperub}) gives
\begin{multline}
\lim_{n\rightarrow \infty}P(\xi_{box,r}(\beta)\leq 0)  =  \lim_{n\rightarrow \infty}P(-\xi_{box,r}(\beta)\geq 0) \\
= \lim_{n\rightarrow \infty}P\left (\min_{\|\lambda\|_2\leq 1,\lambda_i\geq 0}\max_{\|\x\|_2\leq 1,\x_i\in\left [-\frac{1}{\sqrt{\beta n}},\frac{1}{\sqrt{\beta n}}\right ]}\left (\lambda^TH\x-\frac{\kappa}{\sqrt{\beta}}\lambda^T\1\right )\geq 0\right )
\\
 \geq
\lim_{n\rightarrow \infty}P\left (\min_{\|\lambda\|_2\leq 1,\lambda_i\geq 0}\max_{\|\x\|_2\leq 1,\x_i\in\left [-\frac{1}{\sqrt{\beta n}},\frac{1}{\sqrt{\beta n}}\right ]}\left (\lambda^TH\x-\frac{\kappa}{\sqrt{\beta}}\lambda^T\1+(g-\epsilon_{5}^{(g)}\sqrt{n})\|\lambda\|_2\|\x\|_2\right )\geq 0\right )\\
\hspace{-.4in} \geq
\lim_{n\rightarrow\infty}P\left (\min_{\|\lambda\|_2\leq 1,\lambda_i\geq 0,\lambda\neq 0}\max_{\|\x\|_2\leq 1,\x_i\in\left [-\frac{1}{\sqrt{\beta n}},\frac{1}{\sqrt{\beta n}}\right ]}\left (\|\x\|_2\g^T\lambda+\|\lambda\|_2\h^T\x-\frac{\kappa}{\sqrt{\beta}}\lambda^T\1-\epsilon_{5}^{(g)}\sqrt{n}\|\lambda\|_2\|\x\|_2\right )> 0\right )
 \geq  1.\\\label{eq:leftnegprobanal3boxperub}
\end{multline}
From (\ref{eq:leftnegprobanal3boxperub}) one then has
\begin{equation}
\lim_{n\rightarrow \infty}P(\xi_{box,r}(\beta)> 0)=1-\lim_{n\rightarrow \infty}P(\xi_{box,r}(\beta)\leq 0)\leq 0,\label{eq:leftnegprobanal4boxperub}
\end{equation}
which implies that if (\ref{eq:negcondxipuboxperub}) holds then (\ref{eq:defprobucor2boxper}) is feasible with overwhelming probability.

We summarize our results from this subsection in the following theorem.

\begin{theorem}
Let $H$ be an $m\times n$ matrix with i.i.d. standard normal components. Let $n$ be large and let $m=\alpha n$, where $\alpha>0$ is a constant independent of $n$. Let $\xi_{box}$ be as in (\ref{eq:feasboxper}) and let $\frac{\kappa}{\sqrt{\beta}}>0$ be a scalar constant independent of $n$. Let all $\epsilon$'s be arbitrarily small constants independent of $n$. Further, let $\g_i$ be a standard normal random variable and set
\begin{equation}
\hspace{-.65in}f_{gar}\left (\frac{\kappa}{\sqrt{\beta}}\right )=\frac{1}{\sqrt{2\pi}}\int_{-\frac{\kappa}{\sqrt{\beta}}}^{\infty}\left (\g_i+\frac{\kappa}{\sqrt{\beta}}\right )^2e^{-\frac{\g_i^2}{2}}d\g_i
=\frac{1}{\sqrt{2\pi}}\int_{-\infty}^{\frac{\kappa}{\sqrt{\beta}}}\left (\g_i-\frac{\kappa}{\sqrt{\beta}}\right )^2e^{-\frac{\g_i^2}{2}}d\g_i=\frac{\kappa e^{-\frac{\kappa^2}{2\beta}}}{\sqrt{2\beta\pi}}+\frac{(\frac{\kappa^2}{\beta}+1)\mbox{erfc}\left ( -\frac{\kappa}{\sqrt{2\beta}}\right )}{2}\label{eq:thmfgarboxperub}
\end{equation}
and
\begin{equation}
\widehat{f_{box}}(\beta)=\frac{1}{\sqrt{\beta}}\max_{\gamma\geq 0}\left (-\frac{e^{-2\gamma^2}}{\sqrt{2\pi}}+\gamma\mbox{erfc}\left (\frac{2\gamma}{\sqrt{2}}\right )-\frac{1}{4\gamma}\left (\mbox{erfc}\left (-\frac{2\gamma}{\sqrt{2}}\right )-1\right )-\gamma\beta \right ).\label{eq:thmwhfboxboxper}
\end{equation}
Let $\alpha$ be a scalar such that
\begin{equation}
-(1-\epsilon_{1}^{(n)} )\widehat{f_{box}}(\beta)-(1+\epsilon_{1}^{(m)})\sqrt{\alpha f_{gar}\left (\frac{\kappa}{\sqrt{\beta}}\right )}+\epsilon_{5}^{(g)}>0.\label{eq:condxinthmstoc30boxperub}
\end{equation}
Then
\begin{equation}
\lim_{n\rightarrow \infty}P(\xi_{box,r}(\beta)> 0)=1-\lim_{n\rightarrow \infty}P(\xi_{box,r}(\beta)\leq 0)\leq 0. \label{eq:probthmc30boxperub}
\end{equation}
Moreover, let $\alpha$ be a scalar such that
\begin{equation}
\max_{\beta\in(0,1]}\left(-(1-\epsilon_{1}^{(n)})\widehat{f_{box}}(\beta)-(1+\epsilon_{1}^{(m)})\sqrt{\alpha f_{gar}\left (\frac{\kappa}{\sqrt{\beta}}\right )}+\epsilon_{5}^{(g)}\right )>0.\label{eq:condxinthmstoc30boxpernobetaub}
\end{equation}
Then
\begin{equation}
\hspace{-.3in}\lim_{n\rightarrow\infty}P(\xi_{box,r}=\min_{\beta\in(0,1]}\xi_{box,r}(\beta)> 0)=\lim_{n\rightarrow\infty}P\left (\min_{\x_i\in\left [-\frac{1}{\sqrt{n}},\frac{1}{\sqrt{n}}\right ],\|\x\|_2^2=1}\max_{\|\lambda\|_2=1,\lambda_i\geq 0}\left (\frac{\kappa}{\sqrt{\beta}}\lambda^T\1-\lambda^TH\x\right )> 0\right )\leq 0 \label{eq:probthmc30boxpernobetaub}
\end{equation}
and (\ref{eq:defprobucor2boxper}) is feasible with overwhelming probability.
\label{thm:boxperc30ub}
\end{theorem}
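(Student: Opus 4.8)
The plan is to prove the statement by working not with $\xi_{box}(\beta)$ directly but with the relaxed quantity $\xi_{box,r}(\beta)$ of (\ref{eq:feasboxmodub})--(\ref{eq:feasbox1ub}), in which the normalizations $\|\lambda\|_2=1$ and $\|\x\|_2^2=\beta$ are loosened to $\|\lambda\|_2\le 1$ and $\|\x\|_2^2\le\beta$. This relaxation is chosen for two reasons. First, it puts the problem into a form where duality applies cleanly, so that $-\xi_{box,r}(\beta)$ becomes the min--max in (\ref{eq:uncormaxmin1boxperub}), whose structure matches the hypotheses of the Gordon-type comparison we need. Second, the feasibility conclusion still transfers for free: $\xi_{box,r}(\beta)\le 0$ for a single $\beta\in(0,1]$ already forces the existence of an $\x$ with $\x_i\in[-\tfrac{1}{\sqrt{\beta n}},\tfrac{1}{\sqrt{\beta n}}]$, $\|\x\|_2\le 1$ and $H\x\ge\tfrac{\kappa}{\sqrt{\beta}}$, and rescaling by $\sqrt{\beta}$ turns this into a solution of (\ref{eq:defprobucor2boxper}). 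It is precisely the convexity of the box that makes this escape-through-the-interior argument possible (there is no analogue for the $\pm1$ or $0/1$ perceptrons).

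First I would invoke Lemma \ref{lemma:negproblemmaboxperub} (the probabilistic comparison coming from Theorem \ref{thm:Gordonmesh1}) with $\zeta_\lambda=\tfrac{\kappa}{\sqrt{\beta}}\lambda^T\1+\epsilon_5^{(g)}\sqrt{n}\|\lambda\|_2\|\x\|_2$, replacing the bilinear process $\lambda^TH\x$ by the separable one $\|\x\|_2\g^T\lambda+\|\lambda\|_2\h^T\x$. On the ``$Y$ side'' I would solve the inner maximization over $\x$ (the box intersected with the unit ball), arriving at the expression in (\ref{eq:negprobanal1boxperub}); discarding the outer $\max(0,\cdot)$ only strengthens the event, and the crucial step is that what remains factors as $\|\lambda\|_2$ times the scalar $-f_{box}^{(r)}(\h,\beta)-\|(\g-\tfrac{\kappa}{\sqrt{\beta}}\1)_-\|_2-\epsilon_5^{(g)}\sqrt{n}$ once one uses $\lambda\ge 0$ and Cauchy--Schwarz to get $\lambda^T(\g-\tfrac{\kappa}{\sqrt{\beta}}\1)\ge-\|(\g-\tfrac{\kappa}{\sqrt{\beta}}\1)_-\|_2\|\lambda\|_2$. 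Since $\|\lambda\|_2>0$, positivity of the minimum over $\lambda$ reduces to positivity of that scalar, and I would close the loop with concentration: $f_{box}^{(r)}(\h,\beta)$ concentrates around $f_{box}(\beta)\sqrt{n}$ (it is a $1$-Lipschitz functional of a Gaussian vector, so its fluctuations are $O(1)$), and $\|(\g-\tfrac{\kappa}{\sqrt{\beta}}\1)_-\|_2$ around $\sqrt{m\,f_{gar}(\kappa/\sqrt{\beta})}$ exactly as in \cite{StojnicGardGen13}. Substituting the explicit expression $\widehat{f_{box}}(\beta)$ for $f_{box}(\beta)$ from Section \ref{sec:estfbox} (an equality there by the strong-duality remark, which is what makes this bound coincide with the one in Theorem \ref{thm:boxperc30}) yields the condition (\ref{eq:condxinthmstoc30boxperub}).

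Next I would dispose of the ``$X$ side'', which carries the extra term $g\|\lambda\|_2\|\x\|_2$: since $g$ is $O(1)$ while every other term is $\Theta(\sqrt{n})$, the event $\{g\ge\epsilon_5^{(g)}\sqrt{n}\}$ is exponentially unlikely, so this term is harmless and (\ref{eq:leftnegprobanal2boxperub}) lets one pass to $\lambda^TH\x-\tfrac{\kappa}{\sqrt{\beta}}\lambda^T\1$. Chaining the comparison with the above then gives $\lim_n P(\xi_{box,r}(\beta)\le 0)=1$ whenever (\ref{eq:condxinthmstoc30boxperub}) holds, i.e. $\xi_{box,r}(\beta)>0$ with vanishing probability, which by the rescaling observation already proves feasibility of (\ref{eq:defprobucor2boxper}). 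For the $\beta$-free statement I would pick the $\beta^{\star}$ maximizing the left-hand side of (\ref{eq:condxinthmstoc30boxpernobetaub}) — no union bound over $\beta$ is needed, since a single good $\beta$ suffices for feasibility — and note $\xi_{box,r}=\min_\beta\xi_{box,r}(\beta)\le\xi_{box,r}(\beta^{\star})\le 0$ with overwhelming probability.

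The step I expect to be the main obstacle is the inner maximization over $\x$ under the simultaneous box and $\ell_2$-ball constraints, together with the subsequent factorization through $\|\lambda\|_2$: this is the one genuinely new computation relative to the lower-bound direction of Section \ref{sec:probanalrigboxper}, and obtaining the clean $\max(0,\cdot)$ form and the resulting scalar inequality — rather than a coupled optimization that does not decouple — is what makes the whole scheme work. Everything downstream (the Gordon comparison, the Lipschitz/Gaussian concentration estimates, and the passage to feasibility) follows the template already established in the earlier sections and in \cite{StojnicGardGen13,StojnicGardSphNeg13}.
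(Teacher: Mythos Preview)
Your proposal is correct and follows essentially the same route as the paper: relax to $\xi_{box,r}(\beta)$, use duality to reach (\ref{eq:uncormaxmin1boxperub}), apply Lemma \ref{lemma:negproblemmaboxperub} with the same choice of $\zeta_\lambda$, solve the inner maximization over $\x$ to obtain the $\max(0,\cdot)$ form (\ref{eq:negprobanal1boxperub}), factor through $\|\lambda\|_2$ via the $(\g-\tfrac{\kappa}{\sqrt{\beta}}\1)_-$ reduction, invoke concentration of $f_{box}^{(r)}(\h,\beta)$ and of $\|(\g-\tfrac{\kappa}{\sqrt{\beta}}\1)_-\|_2$, and absorb the stray $g$ on the $X$ side by its Gaussian tail. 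Your explicit observation that a single good $\beta$ suffices (so no union bound is needed for the feasibility direction) and that feasibility transfers from $\xi_{box,r}$ back to (\ref{eq:defprobucor2boxper}) by rescaling is exactly what the paper compresses into ``the recognition that $\xi_{box,r}$ and $\xi_{box}$ have the same sign.''
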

\begin{proof}
Follows from the above discussion, comments right after (\ref{eq:defprobucor2boxper}), and the recognition that $\xi_{box,r}$ and $\xi_{box}$ have the same sign.
\end{proof}

Similarly to what was done in Section \ref{sec:probanalrigboxper}, one can again be a bit more informal and ignore all technicalities and $\epsilon$'s. After doing so one has the following: let $\widehat{\alpha}$ be the smallest $\alpha$ such that
\begin{equation}
\min_{\beta\in(0,1]}\left(\sqrt{\alpha f_{gar}\left (\frac{\kappa}{\sqrt{\beta}}\right )}+\widehat{f_{box}}(\beta)\right )=0.\label{eq:infalphaboxperub}
\end{equation}
Then as long as
\begin{equation}
\alpha<\widehat{\alpha},\label{eq:condalphaboxperub}
\end{equation}
the problem in (\ref{eq:defprobucor2boxper}) will be feasible with overwhelming probability. As mentioned in Section \ref{sec:probanalrigboxper} the above condition matches the one obtained in \cite{GutSte90} based on a replica statistical mechanics type of approach.

The results obtained based on Theorems \ref{thm:boxperc30} and \ref{thm:boxperc30ub} (as well as those predicted assuming replica symmetry and given in \cite{GutSte90}) are presented in Figure \ref{fig:boxperc30c3opt}. For the values of $\alpha$ that are to the right of the given curve the memory will not operate correctly with overwhelming probability. On the other hand, for the values of $\alpha$ that are to the left of the given curve the memory will operate correctly with overwhelming probability. This of course follows from the fact that with overwhelming probability over $H$ the inequalities in (\ref{eq:defprobucor2boxper}) will (or will not) be simultaneously satisfiable.
\begin{figure}[htb]
\centering
\centerline{\epsfig{figure=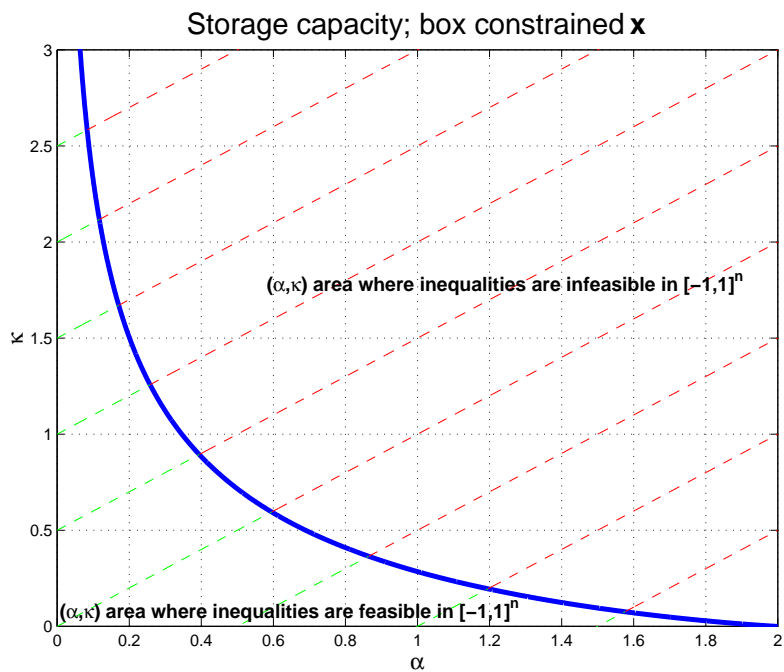,width=10.5cm,height=9cm}}
\caption{$\kappa$ as a function of $\alpha$; $\x\in \left [-\frac{1}{\sqrt{n}},\frac{1}{\sqrt{n}} \right ]^n$}
\label{fig:boxperc30c3opt}
\end{figure}

\section{Conclusion}
\label{sec:conc}

In this paper we looked at a special class of perceptrons, that we called discrete preceptrons. While various features are of interest in studying pretty much any type of perceptron we here focused on its properties when used as storage memories. More specifically, we considered several mathematical problems that eventually correspond to computing what is in neural networks terminology known as the storage capacity of perceptrons.

We considered two special classes of discrete perceptrons: one that we called $\pm 1$ perceptrons and another that we called $0/1$ perceptrons. For both of these classes we, in a statistical context determined the upper bounds on their storage capacities. Moreover, these happen to match the predictions obtained within the neural networks framework through the use of replica symmetry theory from statistical mechanics. In addition to two these two classes, we also consider a continuous type of perceptron that we referred to as the box-constrained perceptron. These perceptrons can be viewed as a limiting version of the so-called digital perceptrons which on the other hand are an extension of the binary or $\pm 1$ perceptrons. For the box-constrained perceptrons we determined the exact value of the storage capacity. This of course confirmed earlier predictions obtained through the replica symmetry type of approach of statistical mechanics.

Of course, it is a no surprise that for the box-constrained case we obtained the exact values of the optimal storage capacity. Since computing these capacities amounts to solving an optimization problem which turns out to be doable in a reasonable (actually polynomial) amount of time the results we obtained here are then in a complete agreement with what the theory that we developed in \cite{StojnicRegRndDlt10} predicts. Of course, this (and ultimately the entire theory we developed in \cite{StojnicRegRndDlt10}) also provides a rigorous mathematical confirmation for long established beliefs of physicists.

As for the results that we presented for purely discrete perceptrons, we presented essentially a powerful mechanism that can be used to obtain the upper bounds on their storage capacities. Moreover, for the $\pm 1$ perceptron we then introduced a modification of the mechanism that can lower these upper bounds. While doing so, we also uncovered an intersting phenomenon that happens in the analysis of $\pm 1$ perceptrons. Namely, the lowered upper bounds happen to match the simple combinatorial bounds that have long served as a clear mathematical proof that the replica symmetry results are well above the true storage capacity values.

We should also mention that besides the storage capacities many other features of perceptrons are also of interest. Some of them also relate to their memory capacities while others relate to functioning of these memories. The concepts that we presented can be utilized to characterize many of these features and we will present results in these directions elsewhere. Also, the results we presented relate to a particular statistical version of the spherical perceptron. Such a version is within the frame of neural networks/statistical mechanics typically called uncorrelated. As was the case with the results we presented in \cite{StojnicGardGen13} when we studied the basics of the spherical perceptrons, the results we presented here can also be translated to cover the corresponding correlated case. While on the topic of randomness, we should emphasize that strictly speaking we instead of typical binary patterns assumed standard normal ones. This was to done to make the presentation as easy as possible. As mentioned earlier in the paper (and as discussed to a much greater detail in \cite{StojnicHopBnds10,StojnicMoreSophHopBnds10}), all results that we presented easily extend beyond the standard Gaussian setup we utilized. A way to show that would be to utilize a repetitive use of the central limit theorem. For example, a particularly simple and elegant approach in that direction would be the one of Lindeberg \cite{Lindeberg22}. Adapting our exposition to fit into the framework of the Lindeberg principle is relatively easy and in fact if one uses the elegant approach of \cite{Chatterjee06} pretty much a routine. However, as we mentioned when studying the Hopfield and Little models \cite{StojnicHopBnds10,StojnicMoreSophHopBnds10,StojnicAsymmLittBnds11}, since we did not create these techniques we chose not to do these routine generalizations.

In this paper we primarily focused on the behavior of the storage capacity when viewed from an analytical point of view. In other words, we focused on quantifying analytically what the capacity would be in a statistical scenario. Of course, a tone of interesting questions related to this same problem arise if one looks at it from an algorithmic point of view. For example, one may wonder how easy is to actually determine the strengths of the bonds that do achieve the storage capacity (or to be more in alignment with what we proved here, a lower bound of the storage capacity). While this problem is relatively easy (in fact, as mentioned above, solvable in polynomial time) for the box-constrained perceptrons, it is much harder for the purely discrete counterparts we studied here. In this paper we were mostly concerned with certain analytical properties of the discrete perceptrons and consequently did not present any considerations in the algorithmic direction. However, we do mention that one can design algorithms similar to those designed for problems considered in \cite{StojnicUpperSec13}. Since an algorithmic consideration of discrete perceptrons is an important topic on its own, we will present a more detailed discussion in this direction in a separate paper.

Also, we emphasized on multiple occasions throughout the paper that here we considered only three particular versions of discrete perceptrons (in fact one of them as a limiting version essentially becomes continuous). Also as we mentioned throughout the paper, we did so to enable an easy flowing exposition and to avoid overloading the presentation of the main concepts with unnecessary details of different perceptron versions. However, we should add
that many other discrete versions are of interest and in fact have been studied analytically or even algorithmically throughout the vast literature related to perceptrons. All concepts that we presented here can be easily adapted to pretty much any of these versions. That typically does take some work but is in principle a routine and we will present some of concrete results in these directions elsewhere.

\begin{singlespace}
\bibliographystyle{plain}
\bibliography{GardDiscPerRefs}
\end{singlespace}

\end{document}